\newtheorem{thm}{Theorem}[section]
\newtheorem{cor}[thm]{Corollary}
\newtheorem{lm}[thm]{Lemma}
\newtheorem{pr}[thm]{Proposition}
\newtheorem{defn}[thm]{Definition}
\newtheorem{exam}[thm]{Example}
\def\Der{\operatorname{Der}}
\def\Aut{\operatorname{Aut}}
\def\Ker{\operatorname{Ker}}
\title{Some radicals, Frattini and Cartan Subalgebras of Leibniz $n$-algebras}
\author[F. Gago \and M.  Ladra \and  B.A.
Omirov \and R.M. Turdibaev]{F. Gago \and M.  Ladra \and  B.A. Omirov \and R.M. Turdibaev}
\begin{document}

\begin{abstract}
In the present work we introduce notions such as $k$-solvability, $s$-
and $K_1$-nilpotency and the corresponding radicals. We prove that
these radicals are invariant under derivations of Leibniz
$n$-algebras. The Frattini and Cartan subalgebras of Leibniz
$n$-algebras are studied. In particular, we construct examples
that show that a classical result on conjugacy of Cartan
subalgebras of Lie algebras, which also holds in Leibniz algebras and
Lie $n$-algebras, is not true for Leibniz $n$-algebras.
\end{abstract}

\keywords{Lie $n$-algebras; Leibniz $n$-algebras;
$k$-solvability, nilpotency; Frattini subalgebra; Cartan
subalgebra; Jacobson radical}
\subjclass[2010]{17A32, 17A42, 17A60.}

\maketitle

\section{Introduction}
This work is devoted to the investigation of Leibniz $n$-algebras.
In 1985, Filippov \cite{Fil} introduced a notion of Lie
 $n$-algebra with an n-ary skew-symmetric multiplication which satisfies
the identity
\[
[[x_1,x_2, \dots ,x_n],y_2,\dots ,y_n]=\sum_{i=1}^n \ [x_1,\dots ,
x_{i-1},[x_i,y_2,\dots,y_n],x_{i+1},\dots,x_n] \eqno(1)
\]
bearing in mind the general notion of $\Omega$-algebra considered
by Kurosh \cite{Kur}.

 Earlier in 1973, Nambu
\cite{Nambu} had constructed an example of 3-Lie algebra, where the multiplication for a triple of classical observables on
the three-dimensional phase space $\mathbb{R}^3$  was given by the Jacobian.
This bracket naturally generalizes the usual Poisson bracket from a
binary to a ternary operation.

In 1993, Loday \cite{Lo1,Lo2} introduced a non skew-symmetric
version of Lie algebras, the so-called Leibniz algebras. As a
generalization of Leibniz algebras and n-Lie algebras, in 2002,  Casas, Loday and
Pirashvili \cite{Cas1} defined n-Leibniz algebras  as a non
skew-symmetric version of Lie $n$-algebras. They also presented
constructions between the varieties of Leibniz algebras and Leibniz
$n$-algebras $(n \geq 3)$ which are not invertible.

In the present work, in Section \ref{prel}, we introduce the Frattini subalgebra  of a Leibniz $n$-algebra and establish properties extending  some results of
 Leibniz algebras and of Lie $n$-algebras.  Frattini theory was originally discovered in group
theory and further have been studied in Lie algebras
 in \cite{Mars,barnes,Stit}, in Lie $n$-algebras in \cite{Bai1,Will2}
  and in Leibniz algebras in \cite{barnes2,barnes3}. Here we show that
many results concerning  Frattini subalgebras and Frattini
ideals from the theory of Lie $n$-algebras remain true when we omit
the skew-symmetrical property of the $n$-ary multiplication.

In Section \ref{rmo},  we study the right multiplication operators in a Leibniz $n$-algebra.
 Filippov \cite{Fil} noted that the so-called right
multiplication operators play the same crucial role in the theory of Lie
$n$-algebras as in Lie algebras since they form a Lie algebra
with respect to the commutator. The space of the right multiplication
operators in Leibniz $n$-algebras also forms an ideal in the Lie
algebra of derivations. However, in the case $n\geq 3$, some well-known
properties of the right multiplication operators do not hold in
general; for instance, in \cite{cartan} it was given an example of a
Leibniz $n$-algebra which admits a non-degenerate  right multiplication operator. Because of that curious properties of these
operators, to obtain some results on  right
multiplication operators which are valid for Leibniz and Lie $n$-algebras we
must consider them with additional conditions.

In Section \ref{invar}, we study solvability and  nilpotency in
 Leibniz $n$-algebras and show  that the solvable and nilpotent radicals are  invariant under all derivations. Since multiplication in Leibniz $n$-algebras is not anti-symmetric
in all the variables, notions such as nilpotency and solvability may
be introduced in different ways depending on the position of the
multiplicand. The product in the definition of the corresponding
series is not necessarily an ideal and this makes some
arguments difficult to prove. Hence, we
introduce special notions, as $k$-solvability, nilpotency and
$K_1$-nilpotency of Leibniz $n$-algebras. Most of them agree
with the corresponding notions on particular cases: Lie $n$-algebras
\cite{Kasymov} and Leibniz algebras. We establish some properties
of $k$-solvable (nilpotent) ideals, as well.

Finally, in Section \ref{no_conj}, we construct examples that show the
non-conjugacy of Cartan
subalgebras for Leibniz $n$-algebras.
In \cite{cartan}, it was proved that  the null root subspace of the  right
multiplication operators with respect to a regular element is a
nilpotent subalgebra. Here we obtain that this subalgebra under
some restriction is a Cartan subalgebra. Moreover, a classical
result about conjugacy of Cartan subalgebras in Lie algebras that
was extended to the general cases - Leibniz  algebras \cite{Omirov2}
and Lie $n$-algebras \cite{Kasymov}, unfortunately does not hold in the
case of Leibniz $n$-algebras ($n\geq 2$). Starting with a particular Lie $n$-algebra,
 we construct  Leibniz $n$-algebras which factored out by the  ideal
$I$ generated by the elements
$[x_1,\dots,x_i,\dots,x_j,\dots,x_n]$, where $x_i=x_j$ for some
$1\leq i \neq j \leq n$, are  isomorphic to the given Lie $n$-algebra
under some conditions. These Leibniz $n$-algebras  have Cartan subalgebras of different
dimensions and therefore they are not conjugated (see Example \ref{cartan_examp}).

\section{Preliminaries}\label{prel}

\begin{defn}[\cite{Cas1}]
 A vector space $L$ with an $n$-ary multiplication
$[-,-,\dots,-]: L^{\otimes n} \to L$ is called a Leibniz $n$-algebra
if it satisfies the following identity
\begin{equation}\label{FI}
[[x_1,x_2, \dots ,x_n],y_2,\dots ,y_n]=\sum_{i=1}^n \ [x_1,\dots ,
x_{i-1},[x_i,y_2,\dots,y_n],x_{i+1},\dots,x_n]
\end{equation}
\end{defn}

It should be noted that if the product $[-,-,\dots,-]$ is
skew-symmetric in each pair of variables, i.e.
\[[x_1, x_2, \dots, x_i,
\dots, x_j, \dots, x_n]= - [x_1, x_2, \dots, x_j, \dots, x_i,
\dots, x_n] \, ,\] then this Leibniz $n$-algebra becomes a Lie
$n$-algebra.

Since in Leibniz $n$-algebras the $n$-ary multiplication is not
necessarily skew-symmetrical, basic notions such as ideals have to be
considered with additional conditions.

\begin{defn} A subspace $I$ of a Leibniz $n$-algebra $L$ is called an $s$-sided ideal of
$L$, if \[[\underbrace{L,\dots,L}_{s-1},I,\underbrace{L,\dots,
L}_{n-s}]\subseteq I.\] If $I$ is $s$-ideal for all $1\leq s \leq
n$, then $I$ is called an ideal.
\end{defn}

\begin{defn}
A proper subalgebra $M$ of a Leibniz $n$-algebra $L$ is called
maximal if the only subalgebra properly containing $M$ is $L$.
\end{defn}

\begin{defn}
The intersection of all maximal subalgebras of a Leibniz $n$-algebra $L$
is a subalgebra denoted by $F(L)$ and it  is called the Frattini
subalgebra.

The maximal ideal of $L$ that is contained in $F(L)$  is called the Frattini ideal and it is denoted by  $\phi(L)$.
\end{defn}

The following statements which hold for Lie $n$-algebras
\cite{Bai1} can be extended in a similar way to the case of
Leibniz $n$-algebras.

\begin{pr}
Let $L$ be a Leibniz $n$-algebra. Then the following statements
hold:
\begin{enumerate}
  \item If $B$ is a subalgebra of $L$ such that $B+F(L)=L$, then $B=L$.
  \item  If $B$ is a subalgebra of $L$ such that $B+\phi(L)=L$, then
$B=L$.
\end{enumerate}
\end{pr}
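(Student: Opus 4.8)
The plan is to prove statement (1) by a maximality argument and then deduce statement (2) as an immediate formal consequence. First I would argue by contradiction: suppose $B$ is a \emph{proper} subalgebra of $L$ with $B+F(L)=L$. Working in the finite-dimensional setting, where every proper subalgebra is contained in a maximal one, I would fix a maximal subalgebra $M$ with $B\subseteq M$. The point of introducing $M$ is to have a single proper subalgebra that simultaneously contains $B$ and $F(L)$.

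The key step is to invoke the definition of $F(L)$ as the intersection of \emph{all} maximal subalgebras, which gives in particular $F(L)\subseteq M$. Since $M$ is a subspace containing both $B$ and $F(L)$, it contains their sum, so that
\[
L=B+F(L)\subseteq M .
\]
This forces $M=L$, contradicting the fact that a maximal subalgebra is proper. Hence no proper $B$ can satisfy $B+F(L)=L$, which is exactly statement (1).

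For statement (2) I would use the containment $\phi(L)\subseteq F(L)$, which holds by definition since $\phi(L)$ is the maximal ideal contained in $F(L)$. If $B$ is a subalgebra with $B+\phi(L)=L$, then
\[
L=B+\phi(L)\subseteq B+F(L)\subseteq L ,
\]
so $B+F(L)=L$, and applying part (1) yields $B=L$. Thus (2) reduces entirely to (1).

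The only genuine obstacle is the existence of a maximal subalgebra lying above a given proper subalgebra; this is where finite-dimensionality (or, more generally, a chain condition allowing a Zorn-type argument) is essential, since the definition of $F(L)$ is otherwise vacuous. Once that existence is granted, the remainder is purely formal manipulation of subspace sums, and no special feature of the non-skew-symmetric $n$-ary multiplication is needed.
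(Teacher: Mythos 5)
Your proof is correct and is essentially the standard Frattini argument that the paper itself relies on (the paper gives no explicit proof, deferring to the Lie $n$-algebra case in the cited work of Bai, Chen and Meng, where precisely this maximal-subalgebra containment argument is used). Your reduction of (2) to (1) via $\phi(L)\subseteq F(L)$, and your remark that one needs every proper subalgebra to lie in a maximal one (automatic in finite dimension), are both in order.
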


\begin{pr}\label{F(L)}
Let $L$ be a Leibniz $n$-algebra and $B$ an ideal of  $L$.
Then there exists a proper subalgebra $C$ of $L$ such that $L=B+C$
iff $B \not \subseteq F(L)$.
\end{pr}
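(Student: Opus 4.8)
The plan is to prove the two implications separately, drawing on part (1) of the preceding proposition for one direction and on the description of $F(L)$ as the intersection of all maximal subalgebras for the other.

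For the implication ``$L=B+C$ for some proper $C$'' $\Rightarrow$ ``$B\not\subseteq F(L)$'', I would argue by contraposition. Assume $B\subseteq F(L)$ and that some subalgebra $C$ satisfies $L=B+C$. Then $L=B+C\subseteq F(L)+C$, so $F(L)+C=L$. Since $C$ is a subalgebra, part (1) of the preceding proposition forces $C=L$. Thus no proper subalgebra $C$ can satisfy $L=B+C$ once $B\subseteq F(L)$, which is exactly the contrapositive of the desired implication.

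For the reverse implication, assume $B\not\subseteq F(L)$. Because $F(L)$ is the intersection of all maximal subalgebras of $L$, the failure $B\not\subseteq F(L)$ guarantees a maximal subalgebra $M$ with $B\not\subseteq M$. I would take $C=M$, which is proper, and show $L=B+M$. The decisive step—and the only place the hypothesis that $B$ is an \emph{ideal} is genuinely needed—is to check that $B+M$ is a subalgebra. Expanding a bracket $[z_1,\dots,z_n]$ with each $z_i\in B+M$ by multilinearity yields a sum of brackets whose entries each lie in $B$ or in $M$: if all entries lie in $M$ the term lies in $M$ (as $M$ is a subalgebra), and if at least one entry lies in $B$ the term lies in $B$, since $B$ is $s$-sided for every $s$ and hence absorbs a bracket no matter which slot the $B$-entry occupies. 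Therefore $[B+M,\dots,B+M]\subseteq B+M$, so $B+M$ is a subalgebra properly containing $M$; maximality of $M$ then gives $B+M=L$, and $C=M$ is the required proper subalgebra.

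The main obstacle I anticipate is precisely this verification that $B+M$ is a subalgebra. In the $n$-ary, non-skew-symmetric setting one cannot merely transcribe the binary Lie-algebra argument; the point is that $B$ must absorb brackets from \emph{every} slot, which is the full (all-sided) ideal condition rather than a one-sided $s$-ideal condition. Once this is in place, both directions follow cleanly, and the equivalence is established.
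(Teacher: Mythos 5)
Your proof is correct, and it follows the same route the paper intends: the paper offers no written argument for this proposition, merely remarking that the Lie $n$-algebra proofs of \cite{Bai1} extend, and your argument is exactly that standard Frattini-type argument (contraposition via part (1) of the preceding proposition in one direction; a maximal subalgebra $M$ not containing $B$, with $B+M$ a subalgebra forced to equal $L$, in the other). You also correctly isolate the only point where the non-skew-symmetric $n$-ary setting requires care—namely that the \emph{all-sided} ideal property of $B$ is what makes $[B+M,\dots,B+M]\subseteq B+M$ hold no matter which slots the $B$-entries occupy—which is precisely the detail needed to ``extend in a similar way'' the Lie $n$-algebra proof.
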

Moreover, the assertion of Proposition \ref{F(L)} holds if we
substitute $\phi(L)$ for $F(L)$.

\begin{pr}
Let $C$ be a subalgebra of  $L$  and $B$ an ideal of $L$ such that $B \subseteq F(C) \ \big(B \subseteq \phi(C) \big) $.

Then $ B\subseteq F(L)$ \ \big($B\subseteq \phi(L)$, respectively\big).
\end{pr}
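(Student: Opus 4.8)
The plan is to prove the statement for the Frattini subalgebra $F$ first, and then to deduce the version for the Frattini ideal $\phi$. To show $B\subseteq F(L)$ it suffices to check that $B$ is contained in every maximal subalgebra $M$ of $L$, so I would argue by contradiction and assume $B\not\subseteq M$ for some maximal $M$. The first key step is to observe that $B+M$ is a subalgebra of $L$: expanding a product $[z_1,\dots,z_n]$ with each $z_i\in B+M$ by multilinearity, the single term with all entries in $M$ lies in $M$, while every term having at least one entry in $B$ lies in $B$ because $B$ is an $s$-sided ideal for that position; hence the product lies in $B+M$. Since $B\not\subseteq M$, this subalgebra properly contains $M$, so maximality forces $L=B+M$.

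The second step is to transfer this decomposition to $C$. Because $B\subseteq F(C)\subseteq C$, the Dedekind modular law for subspaces applies and gives $C=C\cap L=C\cap(B+M)=B+(C\cap M)$. Here $C\cap M$ is a subalgebra of $C$, and it is \emph{proper}: were $C\cap M=C$ we would have $B\subseteq C\subseteq M$, contradicting $B\not\subseteq M$. I would also record that $B$ is an ideal of $C$, which follows at once from $B$ being an ideal of $L$ together with $B\subseteq C\subseteq L$.

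With these facts in hand, Proposition \ref{F(L)} applied to the algebra $C$ (with the ideal $B$ of $C$) yields the contradiction: the existence of the proper subalgebra $C\cap M$ of $C$ satisfying $C=B+(C\cap M)$ forces $B\not\subseteq F(C)$, against the hypothesis $B\subseteq F(C)$. Therefore $B\subseteq M$ for every maximal subalgebra $M$, that is, $B\subseteq F(L)$. For the Frattini-ideal statement, note that $\phi(C)\subseteq F(C)$, so $B\subseteq\phi(C)$ already gives $B\subseteq F(L)$ by the case just proved; since $B$ is an ideal of $L$ contained in $F(L)$ and $\phi(L)$ is by definition the largest ideal of $L$ inside $F(L)$, we conclude $B\subseteq\phi(L)$. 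Alternatively, one can run the same contradiction argument verbatim, replacing Proposition \ref{F(L)} by its $\phi$-analogue noted just after it.

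The step I expect to require the most care is the verification that $B+M$ is a subalgebra, since this is exactly where the $s$-sided ideal property of $B$ for \emph{every} position $s$ (not merely one) is essential; without full ideal-ness the product of elements of $B+M$ need not return to $B+M$. The rest, namely the modular-law identity and the reduction to Proposition \ref{F(L)} on $C$, is then routine.
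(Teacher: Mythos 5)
Your proof is correct. The paper itself gives no explicit argument for this proposition (it defers to the $n$-Lie algebra case of Bai--Chen--Meng, cited as \cite{Bai1}), and your proof is precisely that classical Frattini argument: the contrapositive of Proposition \ref{F(L)} applied to $C$, obtained via the decomposition $L=B+M$ and the modular law $C=B+(C\cap M)$, with the $\phi$-statement then following because $B$ is an ideal of $L$ contained in $F(L)$ and $\phi(L)$ is the largest such ideal.
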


\begin{cor}
Let $L$ be a Leibniz $n$-algebra  and  $B$  a subalgebra of $L$
such that $F(B)  \ \big( \phi(B)\big )$ is an ideal of $ L$. Then
$F(B) \subseteq F(L)$ \ \big($\phi(B) \subseteq \phi(L)$,  respectively\big).
\end{cor}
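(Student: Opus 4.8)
The plan is to obtain this corollary as an immediate application of the preceding proposition, the one asserting that whenever $C$ is a subalgebra of $L$ and $J$ is an ideal of $L$ with $J \subseteq F(C)$ (respectively $J \subseteq \phi(C)$), one has $J \subseteq F(L)$ (respectively $J \subseteq \phi(L)$). The key observation is that the subalgebra whose Frattini data we wish to control may itself be taken in the role of the auxiliary subalgebra appearing in that proposition; in other words, the proposition can be applied in the self-referential case where the ambient subalgebra and the one generating the ideal coincide.

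Concretely, I would set $C := B$, which is a subalgebra of $L$ by hypothesis, and take the ideal to be $J := F(B)$, which is assumed to be an ideal of $L$. Then the containment required by the preceding proposition, namely $J \subseteq F(C)$, reads $F(B) \subseteq F(B)$ and is therefore trivially satisfied. Invoking the proposition at once yields $F(B) \subseteq F(L)$, which is the first assertion.

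The parenthetical $\phi$-statement is handled in exactly the same way: take $C := B$ and $J := \phi(B)$, which is an ideal of $L$ by hypothesis, so that the hypothesis $\phi(B) \subseteq \phi(C) = \phi(B)$ holds automatically, and the $\phi$-version of the preceding proposition delivers $\phi(B) \subseteq \phi(L)$.

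I expect no genuine obstacle here: all the substantive content is already packaged into the preceding proposition, and the only thing to check is that its hypotheses really do apply under the self-referential choice $C = B$. This is immediate, since $B$ is a subalgebra of $L$ and $F(B)$ (respectively $\phi(B)$) is assumed to be an ideal of $L$; the trivial inclusion of a set into itself supplies the remaining condition.
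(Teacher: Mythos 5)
Your proposal is correct and is exactly the intended argument: the corollary is stated in the paper immediately after the proposition (with proofs omitted, as they carry over from the Lie $n$-algebra case), and it follows precisely by the self-referential specialization $C:=B$, $J:=F(B)$ (respectively $J:=\phi(B)$) that you describe, the hypothesis $J\subseteq F(C)$ becoming the trivial inclusion $F(B)\subseteq F(B)$. No gap: the only substantive requirement, that the ideal be an ideal of $L$ itself, is supplied directly by the corollary's hypothesis.
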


\begin{pr}
Let $L$ be a Leibniz $n$-algebra  and  $B$  an ideal
of $L$. Then the following statements hold:
\begin{enumerate}
 \item $ (F(L)+B)/B \subseteq F(L/B)$, \ \big($(\phi(L)+B)/B \subseteq
\phi(L/B)$\big);
 \item If  $ B \subseteq F(L)$ then $F(L)/B=F(L/B), \ \ \phi(L)/B=\phi(L/B)$;
\item If $ F(L/B)=0 \ \ (\phi(L/B)=0)$, then $ F(L) \subseteq B \ \
(\phi(L)) \subseteq B$.
\end{enumerate}
\end{pr}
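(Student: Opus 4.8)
The plan is to reduce all three statements to the standard lattice correspondence under the quotient map. First I would record the \emph{correspondence lemma}: writing $\pi\colon L\to L/B$ for the canonical projection, the assignment $S\mapsto \pi(S)=(S+B)/B$ gives an inclusion-preserving bijection between subalgebras of $L$ containing $B$ and subalgebras of $L/B$, which restricts to a bijection between ideals of $L$ containing $B$ and ideals of $L/B$, and which carries maximal subalgebras to maximal subalgebras. For Leibniz $n$-algebras this goes through exactly as in the classical case because $\pi$ is a surjective homomorphism. This is the only genuinely structural input, and I expect it to be the main (if routine) obstacle: one must verify that the $n$-ary product descends to $L/B$ and, in particular, that the defining $s$-sided conditions of an ideal are preserved, so that $\pi^{-1}$ of an ideal of $L/B$ is again an ideal of $L$ containing $B$.

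For part (1), every maximal subalgebra $M$ of $L$ with $B\subseteq M$ satisfies $F(L)\subseteq M$ by definition of $F(L)$, hence $F(L)+B\subseteq M$ and so $(F(L)+B)/B\subseteq M/B$. By the correspondence lemma the subalgebras $M/B$, as $M$ runs over maximal subalgebras of $L$ containing $B$, are precisely the maximal subalgebras of $L/B$; intersecting yields $(F(L)+B)/B\subseteq F(L/B)$. For the Frattini ideal, note that $\phi(L)+B$ is an ideal of $L$ containing $B$, so $(\phi(L)+B)/B$ is an ideal of $L/B$, and since $\phi(L)\subseteq F(L)$ it is contained in $(F(L)+B)/B\subseteq F(L/B)$; being an ideal of $L/B$ sitting inside $F(L/B)$, it lies in the maximal such ideal $\phi(L/B)$.

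For part (2), when $B\subseteq F(L)$ every maximal subalgebra of $L$ already contains $F(L)$ and hence $B$, so the correspondence lemma gives a bijection between \emph{all} maximal subalgebras of $L$ and all maximal subalgebras of $L/B$. Intersecting and using $\bigcap_M (M/B)=\big(\bigcap_M M\big)/B$ (valid since each $M\supseteq B$) gives $F(L/B)=F(L)/B$. For $\phi$, first observe that $B$ is an ideal contained in $F(L)$, so by maximality $B\subseteq\phi(L)$; part (1) then yields $\phi(L)/B\subseteq\phi(L/B)$. Conversely, write $\phi(L/B)=J/B$ with $J$ an ideal of $L$ containing $B$; from $\phi(L/B)\subseteq F(L/B)=F(L)/B$ I get $J\subseteq F(L)$, whence $J$ is an ideal of $L$ inside $F(L)$ and thus $J\subseteq\phi(L)$, giving $\phi(L/B)\subseteq\phi(L)/B$ and hence equality.

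Finally, part (3) is immediate from part (1): if $F(L/B)=0$ then $(F(L)+B)/B\subseteq F(L/B)=0$, forcing $F(L)\subseteq B$, and likewise $(\phi(L)+B)/B\subseteq\phi(L/B)=0$ forces $\phi(L)\subseteq B$.
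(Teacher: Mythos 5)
Your proof is correct, and it follows the route the paper itself intends: the paper gives no written proof of this proposition, merely asserting that the Lie $n$-algebra arguments of \cite{Bai1} extend to Leibniz $n$-algebras, and those arguments are precisely the maximal-subalgebra correspondence under the quotient map that you use. Your write-up also correctly isolates the only points where the non-skew-symmetric $n$-ary setting could conceivably matter --- that $\pi^{-1}$ of an ideal of $L/B$ is an ideal of $L$, that sums of ideals are ideals, and that maximality among subalgebras containing $B$ coincides with maximality in $L$ --- all of which follow from multilinearity of the bracket alone, so the classical proof transfers verbatim.
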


\begin{thm}
If a Leibniz $n$-algebra  $L$ has a decomposition \[ L=L_1\oplus
L_2\oplus \dots\oplus L_m,\] where  $L_i  \ (1 \leq i \leq m)$  are
ideals of $L$, then
\begin{enumerate}
\item $ F(L) \subseteq F(L_1) + \dots + F(L_m)$;
\item $\phi(L) = \phi(L_1)+ \dots + \phi(L_m)$.
\end{enumerate}
\end{thm}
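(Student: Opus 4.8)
The plan is to prove both containments by exploiting the way the direct-sum decomposition interacts with the $n$-ary product, together with the quotient and ``push-up'' properties established earlier in this section. The starting observation is structural: since each $L_i$ is an ideal and $L_i \cap L_j = 0$ for $i \neq j$, any product $[z_1,\dots,z_n]$ with arguments $z_k \in L_{i_k}$ drawn from the components vanishes unless all the indices $i_k$ coincide. Indeed, if two arguments lie in distinct summands $L_i$ and $L_j$, then the product lies simultaneously in $L_i$ and in $L_j$ (each being an ideal, hence $s$-sided in every slot), so it lies in $L_i \cap L_j = 0$. By multilinearity this gives $[L_{i_1},\dots,L_{i_n}] = 0$ whenever the indices are not all equal, so the multiplication is ``block diagonal'' with respect to the decomposition.

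For part (1) and for the inclusion $\phi(L) \subseteq \phi(L_1)+\dots+\phi(L_m)$ in part (2), I would use the quotient property. Set $M_i = \bigoplus_{j\neq i} L_j$; this is an ideal of $L$, and the composite $L_i \hookrightarrow L \twoheadrightarrow L/M_i$ is an isomorphism of Leibniz $n$-algebras, so it carries $F(L_i)$ onto $F(L/M_i)$ and $\phi(L_i)$ onto $\phi(L/M_i)$ (the Frattini subalgebra and ideal being isomorphism invariants). The earlier proposition on quotients gives $(F(L)+M_i)/M_i \subseteq F(L/M_i)$ and $(\phi(L)+M_i)/M_i \subseteq \phi(L/M_i)$. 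Concretely, writing $x \in F(L)$ as $x = x_1+\dots+x_m$ with $x_i \in L_i$, the image of $x$ under $L \to L/M_i \cong L_i$ is exactly $x_i$, so the inclusion forces $x_i \in F(L_i)$; summing over $i$ yields $x \in F(L_1)+\dots+F(L_m)$. The identical argument with $\phi$ in place of $F$ gives $\phi(L) \subseteq \phi(L_1)+\dots+\phi(L_m)$.

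It remains to prove the reverse inclusion $\phi(L_1)+\dots+\phi(L_m) \subseteq \phi(L)$ in part (2), where the block-diagonal structure is decisive. I would first check that each $\phi(L_i)$, a priori only an ideal of $L_i$, is in fact an ideal of the whole of $L$: given $y \in \phi(L_i)$ and arbitrary elements of $L$ filling the remaining slots, expand each along the decomposition and discard every term having a factor outside $L_i$ (these vanish by the structural observation). What survives is a product of elements of $L_i$ with the single factor $y \in \phi(L_i)$, and since $\phi(L_i)$ is an ideal of $L_i$ this lies in $\phi(L_i)$; the same holds in every slot, so $\phi(L_i)$ is an ideal of $L$. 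With $L_i$ viewed as a subalgebra of $L$ and $\phi(L_i)$ now known to be an ideal of $L$, the Corollary above applies and yields $\phi(L_i) \subseteq \phi(L)$ for each $i$; summing gives the desired inclusion, and hence equality, in part (2).

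I expect the one genuinely delicate point to be the verification that $\phi(L_i)$ is an ideal of $L$ rather than merely of $L_i$, since this is precisely what allows the Corollary to upgrade $\phi(L_i)\subseteq\phi(L_i)$ into $\phi(L_i) \subseteq \phi(L)$. The reason part (1) is stated only as an inclusion, with no matching reverse containment, is that $F(L_i)$ is in general only a subalgebra and not an ideal of $L$, so this upgrade mechanism is simply unavailable for $F$.
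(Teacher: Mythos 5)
Your proof is correct. One point worth noting up front: the paper itself gives no argument for this theorem --- it sits in the block of statements asserted to ``hold for Lie $n$-algebras \cite{Bai1}'' and to extend ``in a similar way'' to Leibniz $n$-algebras --- so your write-up is a genuine, self-contained verification of that claim rather than a reproduction of a proof in the text. Your route uses exactly the Leibniz-level tools the paper has already established: the block-diagonal structure of the multiplication (any product with factors in two distinct summands lies in $L_i\cap L_j=0$, which needs each $L_i$ to be an $s$-sided ideal in \emph{every} slot --- precisely the paper's definition of ideal, and the one place where a careless extension from the skew-symmetric case could go wrong); the quotient proposition $(F(L)+B)/B\subseteq F(L/B)$, $(\phi(L)+B)/B\subseteq\phi(L/B)$ applied with $B=M_i=\bigoplus_{j\neq i}L_j$ together with the isomorphism $L/M_i\cong L_i$, which gives both forward inclusions componentwise; and the push-up corollary ($\phi(B)$ an ideal of $L$ implies $\phi(B)\subseteq\phi(L)$) for the reverse inclusion in part (2), after your correct verification that $\phi(L_i)$, a priori only an ideal of $L_i$, is an ideal of all of $L$ thanks to block-diagonality. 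That verification is indeed the delicate step, and you carry it out properly; your closing remark explaining why part (1) cannot be upgraded to an equality by the same mechanism ($F(L_i)$ need not be an ideal) is also apt. What your argument buys over the paper's bare citation is an explicit confirmation that the Lie $n$-algebra proof survives the loss of skew-symmetry: nowhere do you invoke antisymmetry, only the all-slots ideal property of the summands.
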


Given an arbitrary Leibniz $n$-algebra $L$ consider the following
sequences ($s$ is a fixed natural number, $1\leq s\leq n$):
\begin{align*} L^{<1>_s}=L,\qquad  & L^{<k+1>_s}=
\displaystyle[\underbrace{L,\ldots,L}_{(s-1)-\textrm{times}},L^{<k>_s},
\underbrace{L,\ldots,L}_{(n-s)-\textrm{times}}], \\
L^1=L, \qquad &  L^{k+1}=\sum\limits_{i=1}^n \ \displaystyle[\underbrace{L,\ldots,L}_{(i-1)-
\textrm{times}},L^k,
\underbrace{L,\ldots,L}_{(n-i)-\textrm{times}}].
\end{align*}

\begin{defn} A Leibniz $n$-algebra $L$ is said to be $s$-nilpotent
(nilpotent)  if there exists a natural number $k\in
\mathbb{N}$ ($l\in \mathbb{N}$) such that
$L^{<k>_s}=0$ ($L^l=0$, respectively).
\end{defn}

It should be noticed that for Lie $n$-algebras the above notions of
$s$-nilpotency and nilpotency coincide. Recall also that for
Leibniz algebras (i.e. Leibniz $2$-algebras) the notions of
$1$-nilpotency and nilpotency also coincide \cite{Ayup}.

In \cite[Example 2.2]{cartan}, it is shown that the $s$-nilpotency
property for Leibniz $n$-algebra ($n\geq 3$) essentially depends
on $s$.

Let $H$ be an ideal of a Leibniz $n$-algebra $L$. Put
$H^{(1)_k}=H$ and
\[H^{(m+1)_k}
=\sum_{i_1+\dots+i_k=0}^{n-k}
[\underbrace{L,\dots,L}_{i_1},H^{(m)_k},\underbrace{L,\dots,L}_{i_2},H^{(m)_k},\dots,
\underbrace{L,\dots,L}_{i_k}H^{(m)_k},\underbrace{L,\dots,L}_{n-i_1-\dots-i_k}]\]
for all $1\leq k \leq n$ and $m\geq 1$.

\begin{defn} An $n$-sided ideal $H$ of Leibniz $n$-algebra is said
to be $k$-solvable with index of $k$-solvability equal to $m$ if
there exists $ m\in \mathbb{N}$ such that $H^{(m)_k}=0$ and
$H^{(m-1)_k}\neq 0$.

When $L = H$, $L$ is called  a $k$-solvable Leibniz $n$-algebra.
\end{defn}

Notice that this definition agrees with the definition of
$k$-solvability of Lie $n$-algebras given in \cite{Kasymov}.

\begin{defn} We say that a subalgebra $U$ of a Leibniz $n$-algebra $L$
is left subnormal if there exists a chain of subalgebras
$U=U_k\subseteq\cdots\subseteq U_1\subseteq U_0=L$ with each
$U_{i+1}$ an $r$-ideal $(r\neq 1)$ in $U_i$.
\end{defn}

\begin{thm} Let $U$ be a left subnormal subalgebra of Leibniz
$n$-algebra $L$ and $V$ an ideal in $U$ such that $V\subseteq
F(L)$. If $U/V$ is $1$-nilpotent, then $U$ is $1$-nilpotent.
\end{thm}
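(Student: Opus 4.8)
The plan is to first convert the $1$-nilpotency of $U/V$ into a concrete inclusion and then to reduce the whole statement to the case $U=L$. The terms of the $1$-nilpotent lower central series of the quotient are the images of the $U^{<k>_1}$, so $(U/V)^{<m>_1}=0$ is nothing but $U^{<m>_1}\subseteq V$ for some $m\in\mathbb{N}$. Moreover $U^{<m>_1}$ is a $1$-ideal of $U$, and since $V$ is an ideal of $U$ we get $U^{<j>_1}\subseteq[V,U,\dots,U]\subseteq V$ for every $j\ge m$; thus the descending chain of the $U^{<j>_1}$ is trapped inside $V\subseteq F(L)$, and the entire task is to prove that a chain trapped in the Frattini subalgebra must reach $0$.

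The second step is the reduction to $U=L$, and this is where left subnormality enters. I would induct on the length $k$ of the chain $U=U_k\subseteq\cdots\subseteq U_0=L$. For the inductive step it suffices to show that $V\subseteq F(L)$ forces $V\subseteq F(U_1)$, where $U_1$ is an $r$-ideal of $L$ with $r\neq1$; then the shorter chain $U=U_k\subseteq\cdots\subseteq U_1$ together with $V\subseteq F(U_1)$ lets the induction hypothesis finish the argument. So the heart of this step is a restriction statement of the form $F(L)\cap U_1\subseteq F(U_1)$. To obtain it I would take a maximal subalgebra $M$ of $U_1$ and, assuming $F(L)\cap U_1\not\subseteq M$, attach to $M$ a suitable subspace of $L$ so as to build a proper subalgebra of $L$ avoiding part of $F(L)$, contradicting that $F(L)$ lies in every maximal subalgebra; the hypothesis $r\neq1$ is exactly what keeps the bracket configurations of the enlarged set inside $U_1$ so that it remains a subalgebra.

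For the base case $U=L$ with $V\subseteq F(L)$ I would use the non-generator property already recorded: any subalgebra $B$ with $B+F(L)=L$ equals $L$. Because $V\subseteq F(L)$ sits inside every maximal subalgebra of $L$, the maximal subalgebras of $L$ are precisely the preimages of those of $L/V$, and since $L/V$ is $1$-nilpotent each of them is a $1$-ideal of $L$; feeding this into the standard criterion (every maximal subalgebra being a $1$-ideal forces $1$-nilpotency) yields that $L$ is $1$-nilpotent. Equivalently, letting $W$ be the stabilized term of the $1$-series one has $W=[W,L,\dots,L]$ with $W\subseteq V\subseteq F(L)$, and the non-generator property is used to rule out $W\neq0$.

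The main obstacle I expect is precisely the restriction inclusion $F(L)\cap U_1\subseteq F(U_1)$: all the Frattini propositions quoted in this section run from a subalgebra up to $L$ rather than downward, so this inclusion is not handed to us and must be produced by the explicit subalgebra construction above, in which the condition $r\neq1$ is indispensable for closedness under the $n$-ary bracket. A secondary difficulty is making the base-case stabilization rigorous, which is why one either restricts to the finite-dimensional setting or invokes the maximal-subalgebra criterion for $1$-nilpotency to avoid an explicit limit.
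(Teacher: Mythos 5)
Your proposal's central lemma is false, so the inductive step cannot be repaired. The inclusion $F(L)\cap U_1\subseteq F(U_1)$ fails even for two-sided ideals of Lie algebras (which are Leibniz $2$-algebras, so fall under the theorem): take $L$ to be the three-dimensional Heisenberg algebra with basis $x,y,z$ and $[x,y]=-[y,x]=z$. Its maximal subalgebras are exactly the two-dimensional subspaces containing $z$, so $F(L)=\langle z\rangle$; the subspace $U_1=\langle y,z\rangle$ is a two-sided ideal of $L$ (in particular a $2$-ideal, so your hypothesis $r\neq 1$ is satisfied), but $U_1$ is abelian, every one-dimensional subspace of it is a maximal subalgebra, and $F(U_1)=0$. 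Hence $F(L)\cap U_1\not\subseteq F(U_1)$, and no construction with maximal subalgebras of $U_1$ can prove the contrary. Worse, taking $U=U_1$ and $V=\langle z\rangle$ gives a legitimate instance of the theorem ($V$ is an ideal of $U$ inside $F(L)$, $U/V$ is abelian) whose conclusion is trivially true, yet your reduction demands $V\subseteq F(U_1)=0$; so the induction fails even where there is nothing to prove. Your base case has an independent problem: the ``standard criterion'' that all maximal subalgebras being ($1$-)ideals forces $1$-nilpotency is exactly one of the statements this paper flags as false for general Leibniz $n$-algebras (Section \ref{rmo}: the converse of Williams' theorem ``is not true in general'' and is recovered only under an extra Fitting-null hypothesis), and the alternative stabilized-term argument does not follow from the non-generator property either --- Proposition \ref{F(L)} runs in the wrong direction: from $W\subseteq F(L)$ it says no proper subalgebra complements $W$, which is consistent with $W\neq 0$.

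This failure of downward Frattini restriction is precisely why the paper's proof (a citation: ``similar to the proof of Barnes, Theorem 3.6'' in \emph{Engel subalgebras of Leibniz algebras}) is organized around Engel subalgebras, i.e.\ the Fitting null components $L_0(R(a))$ of right multiplication operators, and not around $F$. Unlike the Frattini subalgebra, Engel subalgebras restrict along a subnormal chain: for a tuple $a$ of elements of a subalgebra $U_1$, the space $U_1$ is $R(a)$-invariant and $L_0\big(R(a)|_{U_1}\big)=U_1\cap L_0(R(a))$, so a criterion of the form ``a left subnormal subalgebra $U$ with $U\subseteq L_0(R(u))$ for all tuples $u$ from $U$ is $1$-nilpotent'' passes down the chain by induction on its length, while the hypothesis $V\subseteq F(L)$ is consumed only once, at the level of the ambient algebra $L$, via the non-generator property applied to subalgebras of $L$ containing some $L_0(R(u))$. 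Your base case actually contains the correct seed of this mechanism: from $1$-nilpotency of $L/V$ and Engel's theorem (quoted in Section \ref{rmo}) one gets $L=L_0(R(x))+V\subseteq L_0(R(x))+F(L)$ for every $x\in L^{\times(n-1)}$, and since $L_0(R(x))$ is a subalgebra, the non-generator property gives $L=L_0(R(x))$, hence $1$-nilpotency. A correct proof extends exactly this argument along the subnormal chain through the Engel-subalgebra machinery developed in Section \ref{rmo}, rather than pushing $F(L)$ down the chain.
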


\begin{proof}
Similar to the proof of  \cite[Theorem 3.6]{barnes2}.
\end{proof}

The following statements hold for Lie $n$-algebras
\cite{Bai1} and are also true for Leibniz $n$-algebras.

\begin{cor} If $I \subseteq F(L)$ is an $r$-ideal $(r\neq
1)$ of $L$, then $I$ is $1$-nilpotent. Particularly, $\phi(L)$ is a
$1$-nilpotent ideal of $L$.
\end{cor}

\begin{defn}
In a Leibniz $n$-algebra $L$ the intersection of all maximal
 ideals of $L$ is called the Jacobson radical and it is denoted by
$J(L)$.
\end{defn}

\begin{pr}\label{onesidewilliams}
Let $L$ be a finite dimensional Leibniz $n$-algebra. Then
\[F(L)\subseteq[L,L,\dots,L]\textrm{ and } J(L)\subseteq [L,L,\dots,L].\]
Moreover, if $L$ is a $k$-solvable Leibniz $n$-algebra, then
\[J(L)=[L,L,\dots,L].\]
\end{pr}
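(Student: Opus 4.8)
The plan is to base everything on a single structural observation: writing $L^2=[L,L,\dots,L]$, this subspace is an ideal of $L$ (indeed $[\underbrace{L,\dots,L}_{s-1},L^2,\underbrace{L,\dots,L}_{n-s}]\subseteq[L,\dots,L]=L^2$ for every $s$), and it coincides with the second term of the lower central series. Hence the quotient $L/L^2$ carries the zero multiplication, which means that \emph{every} subspace $W$ with $L^2\subseteq W\subseteq L$ is automatically both a subalgebra and an ideal of $L$, since $[\underbrace{L,\dots,L}_{s-1},W,\underbrace{L,\dots,L}_{n-s}]\subseteq L^2\subseteq W$. This is the mechanism that turns the whole statement into essentially linear algebra.

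For the two inclusions I would argue as follows. If $L^2=L$ there is nothing to prove, so assume $L^2\subsetneq L$. Using finite-dimensionality, pick any codimension-one subspace (hyperplane) $M$ with $L^2\subseteq M\subsetneq L$. By the observation above $M$ is an ideal, and since it has codimension one the only subspace---hence the only subalgebra and the only ideal---properly containing $M$ is $L$ itself; thus $M$ is simultaneously a maximal subalgebra and a maximal ideal. The standard linear-algebra fact that $\bigcap\{M : M \text{ a hyperplane},\ L^2\subseteq M\}=L^2$ then applies. Since $F(L)$ (respectively $J(L)$) is the intersection over the \emph{full} family of maximal subalgebras (respectively maximal ideals), which contains this subfamily of hyperplanes, we conclude $F(L)\subseteq L^2$ and $J(L)\subseteq L^2$.

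For the ``moreover'' part it remains to prove $L^2\subseteq J(L)$ when $L$ is $k$-solvable, that is, $L^2\subseteq M$ for every maximal ideal $M$. Fix such an $M$ and pass to $S:=L/M$, which has no proper nonzero ideals and is again $k$-solvable, because the $k$-solvable series of $S$ is the image of that of $L$. Since $S^2=[S,\dots,S]$ is an ideal of $S$, maximality of $M$ forces $S^2\in\{0,S\}$. Suppose $S^2=S$. For the whole algebra the $k$-solvable series begins with $S^{(1)_k}=S$ and $S^{(2)_k}=[S,\dots,S]=S^2$, and the recursion then yields $S^{(m)_k}=[S,\dots,S]=S^2=S$ by induction; hence $S^{(m)_k}=S\neq 0$ for every $m$, contradicting $k$-solvability. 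Therefore $S^2=0$, that is $L^2\subseteq M$. Intersecting over all maximal ideals gives $L^2\subseteq J(L)$, and together with the inclusion already established this gives $J(L)=L^2=[L,\dots,L]$.

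The inclusions $F(L),J(L)\subseteq L^2$ are routine once the ``zero multiplication on $L/L^2$'' observation is in place; the real content is the last step. The main obstacle I anticipate is the interaction between $k$-solvability and the simplicity of $L/M$: one must verify precisely that the $k$-solvable derived series of a Leibniz $n$-algebra satisfying $S^2=S$ stabilises at $S$. This rests on the identity $S^{(2)_k}=S^2$ valid in the case $H=L$ (all the distinguished slots being filled by the whole algebra), rather than on the general nested definition of $H^{(m+1)_k}$, and it is the one place where the peculiarities of the $k$-solvable series, as opposed to ordinary solvability, have to be handled with care.
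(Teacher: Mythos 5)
Your proof is correct. Note that the paper offers no proof of Proposition \ref{onesidewilliams} to compare with: like the surrounding Frattini-theoretic statements, it is asserted as carrying over from the Lie $n$-algebra literature (\cite{Bai1}, \cite{Will1}), so your argument in effect supplies the missing details, and it does so along the standard route. Both pillars check out. First, since every $n$-fold bracket lies in $L^2=[L,L,\dots,L]$, any subspace $W$ with $L^2\subseteq W\subseteq L$ is automatically a subalgebra and an ideal, so every hyperplane containing $L^2$ is simultaneously a maximal subalgebra and a maximal ideal; intersecting over this family of hyperplanes gives $F(L)\subseteq L^2$ and $J(L)\subseteq L^2$, the case $L^2=L$ being trivial. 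Second, for a maximal ideal $M$ the quotient $S=L/M$ is again $k$-solvable --- the terms of the $k$-solvable series pass to surjective homomorphic images, which is exactly the computation the paper itself performs in the proof of Proposition \ref{k-sol} --- and $S$ has no nonzero proper ideals, so $S^2\in\{0,S\}$; your identification $S^{(2)_k}=S^2$ (valid because with $H=S$ every slot of every summand in the defining sum is filled by $S$) shows that $S^2=S$ would force $S^{(m)_k}=S\neq 0$ for all $m$, contradicting $k$-solvability. Hence $L^2\subseteq M$ for every maximal ideal $M$, so $L^2\subseteq J(L)$ and equality follows. The only points you leave tacit are harmless in finite dimension: maximal subalgebras and maximal ideals exist whenever $L\neq 0$, and the intersection of all hyperplanes containing a proper subspace equals that subspace.
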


\begin{thm} Let $L$ be a finite dimensional nilpotent Leibniz $n$-algebra. Then the following statements hold:
\begin{enumerate}
  \item Any maximal subalgebra $M$ of $L$ is an ideal of $L$;
  \item  $F(L)=\phi(L)=J(L)=[L,L,\dots,L]$.
\end{enumerate}
\end{thm}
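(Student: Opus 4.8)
The plan is to establish part (1) first, by induction on $\dim L$, and then to deduce part (2) from it together with Proposition \ref{onesidewilliams}. The engine for part (1) is the observation that a nonzero finite dimensional nilpotent $L$ always contains a one-dimensional \emph{central} ideal. Indeed, choosing $c$ with $L^c\neq 0$ but $L^{c+1}=0$ and writing $L^{c+1}=\sum_{i=1}^n[\underbrace{L,\dots,L}_{i-1},L^c,\underbrace{L,\dots,L}_{n-i}]=0$, a sum of subspaces can vanish only if each summand vanishes; hence every $z\in L^c$ satisfies $[x_1,\dots,x_{i-1},z,x_{i+1},\dots,x_n]=0$ for all $x_j\in L$ and every position $i$. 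Any line $A\subseteq L^c$ is therefore a central ideal, which is exactly what feeds the induction.

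For a maximal subalgebra $M$ I would split into two cases. If $A\subseteq M$, I pass to $L/A$, which is nilpotent of strictly smaller dimension and in which $M/A$ is again maximal; the induction hypothesis makes $M/A$ an ideal, and lifting gives $[L,\dots,M,\dots,L]\subseteq M+A=M$, so $M$ is an ideal. If $A\not\subseteq M$, then $M+A=L$ by maximality while $A\cap M=0$, so $M$ has codimension one and $L=M\oplus A$ as vector spaces; expanding any product $[x_1,\dots,x_n]$ with $x_j\in M\oplus A$ by multilinearity, every term carrying an $A$-entry dies by centrality, whence $[L,\dots,L]=[M,\dots,M]\subseteq M$ and a fortiori $[L,\dots,M,\dots,L]\subseteq M$, so $M$ is again an ideal. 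I expect this to be the main obstacle: one must verify that centrality really annihilates a factor in \emph{every} slot, which is where the lack of skew-symmetry of the $n$-ary product requires care, since we cannot simply permute a central element into a favourable position as one would in the Lie case.

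For part (2), using part (1) I would first identify the maximal subalgebras with the maximal ideals: a maximal subalgebra is a proper ideal that cannot be enlarged within any proper ideal, while a maximal ideal extends to some maximal subalgebra which, being an ideal by part (1), must coincide with it. Thus $F(L)=J(L)$, and as this common subspace is an ideal contained in $F(L)$ it is itself the largest such ideal, giving $\phi(L)=F(L)$. Finally, for any maximal subalgebra $M$ the quotient $L/M$ is nilpotent and, by maximality of $M$, has no proper nonzero subalgebra; nilpotency forces $(L/M)^2\subsetneq L/M$, and since $(L/M)^2$ is a subalgebra it must vanish, so $L/M$ is abelian and hence one-dimensional. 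Consequently $[L,\dots,L]\subseteq M$ for every maximal subalgebra, so $[L,\dots,L]\subseteq F(L)$; combined with the inclusion $F(L)\subseteq[L,\dots,L]$ from Proposition \ref{onesidewilliams}, this yields $F(L)=\phi(L)=J(L)=[L,\dots,L]$.
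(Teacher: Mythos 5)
Your proof is correct, but note that it cannot be ``the same as the paper's'': the paper states this theorem without proof, presenting it (together with the surrounding statements) as the Leibniz analogue of nilpotent Frattini theory for Lie $n$-algebras \cite{Bai1,Will1}. So what you have written is a genuinely independent, self-contained argument, and it is worth comparing it with the route implicit in that literature, which is shorter. Since any $n$-fold product of subspaces of $L$ lies in $L^2=[L,\dots,L]$, the subspace $M+L^2$ is automatically a subalgebra; if $M+L^2=L$, then expanding $L^2=[M+L^2,\dots,M+L^2]$ gives $L^2\subseteq M+L^3$, hence inductively $L=M+L^k$ for all $k$, and nilpotency yields the contradiction $L=M$. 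Thus $L^2\subseteq M$ for every maximal subalgebra $M$, which at one stroke makes $M$ an ideal (because $[L,\dots,M,\dots,L]\subseteq L^2\subseteq M$) and gives $[L,\dots,L]\subseteq F(L)$; part (2) then follows from Proposition \ref{onesidewilliams} together with your identification of maximal subalgebras with maximal ideals. Your argument instead transplants the finite $p$-group proof: extract a one-dimensional ideal $A\subseteq L^c$ that is central in every slot --- your observation that each summand of $L^{c+1}=0$ must vanish separately is exactly what neutralizes the lack of skew-symmetry --- and then induct on dimension with the two cases $A\subseteq M$ and $A\not\subseteq M$. This is longer, and in part (2) you still need the separate quotient argument ($L/M$ has no proper nonzero subalgebras, so $(L/M)^2=0$) to get $[L,\dots,L]\subseteq M$, but it buys a structural fact of independent interest, namely that every nonzero finite-dimensional nilpotent Leibniz $n$-algebra contains a one-dimensional central ideal. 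Both approaches are complete and correct; the only cosmetic gap in yours is that in the case $A\not\subseteq M$ you should say explicitly that $M+A$ is a subalgebra (immediate from centrality of $A$) before invoking maximality to conclude $M+A=L$.
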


\section{Right multiplication Operators}\label{rmo}

\begin{defn} A linear map $d$ defined on a Leibniz $n$-algebra $L$ is called a derivation  if
\[d([x_1,x_2,\dots , x_n])=\sum_{i=1}^n \ [x_1,\dots d(x_i),\dots , x_n].\]
The space of all derivations of a given Leibniz $n$-algebra $L$ is
denoted by $\Der(L)$.
\end{defn}

The space $\Der(L)$ forms a Lie algebra with respect to the
commutator \cite{cartan}.

Set $A^{\times k}=\underbrace{A\times A \times \dots \times
A}_{k-times}$.

Given an arbitrary element $x=(x_2,\dots,x_n) \in L^{\times(n-1)}$
consider the operator $R(x): L \to L$ of right multiplication
defined by \[R(x)(z)=[z,x_2,\dots,x_n].\]

Any right multiplication operator is a derivation and the space
$R(L)$ of all right multiplication operators forms a Lie ideal of
$\Der(L)$ \cite{cartan}.

\begin{thm}[\cite{cartan} Engel's theorem] A Leibniz $n$-algebra
$L$ is 1-nilpotent if and only if $R(x)$ is nilpotent for all
$x\in L^{\times(n-1)}$.
\end{thm}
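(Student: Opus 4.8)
The plan is to prove the two implications separately: the forward one is a direct computation with the lower central series $L^{<k>_1}$, while the converse is an Engel-type argument for the Lie algebra of right multiplications.

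For the forward direction, suppose $L$ is $1$-nilpotent, say $L^{<k>_1}=0$. Fixing $x=(x_2,\dots,x_n)\in L^{\times(n-1)}$, a trivial induction on $m$ shows that $R(x)^m(z)\in L^{<m+1>_1}$ for every $z\in L$, since each further application of $R(x)$ pushes an element of $L^{<m>_1}$ into $[L^{<m>_1},L,\dots,L]=L^{<m+1>_1}$. Taking $m=k-1$ gives $R(x)^{k-1}=0$, so every $R(x)$ is nilpotent.

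For the converse I would first reformulate $1$-nilpotency operator-theoretically. Write $\mathcal{R}=R(L)\subseteq\Der(L)\subseteq\operatorname{End}(L)$ for the space of right multiplications, which is a Lie ideal of $\Der(L)$; from \eqref{FI} one derives the commutator rule
\[
[R(y),R(x)]=\sum_{i=2}^{n}R\bigl(x_2,\dots,x_{i-1},[x_i,y_2,\dots,y_n],x_{i+1},\dots,x_n\bigr),
\]
so $\mathcal{R}$ is a Lie algebra, and the associative subalgebra $\mathcal{A}\subseteq\operatorname{End}(L)$ generated by the $R(x)$ satisfies $\mathcal{A}_m\cdot L=L^{<m+1>_1}$, where $\mathcal{A}_m$ denotes the span of products of $m$ right multiplications. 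Hence $L$ is $1$-nilpotent if and only if $\mathcal{A}$ is nilpotent, and, $\mathcal{A}$ being a finite-dimensional associative algebra of operators, this holds as soon as every element of $\mathcal{A}$ acts nilpotently (a nil algebra of matrices is nilpotent). The crucial reduction is therefore to prove that every operator in $\mathcal{R}$ is nilpotent: granting this, the classical linear Engel theorem yields a flag $0=V_0\subset V_1\subset\cdots\subset V_m=L$ with $\mathcal{R}V_i\subseteq V_{i-1}$, and since each $R(x)\in\mathcal{R}$ this reads $[V_i,L,\dots,L]\subseteq V_{i-1}$, whence $L^{<m+1>_1}\subseteq V_0=0$.

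The main obstacle is precisely this reduction, and it is where $n\ge 3$ departs from the binary case. For $n=2$ the map $R$ is linear, so $\mathcal{R}=\{R(a):a\in L\}$ and every element of $\mathcal{R}$ is a single, hence nilpotent, right multiplication; but for $n\ge 3$ a general element of $\mathcal{R}$ is a sum $\sum_j R(x^{(j)})$ that is not itself a right multiplication, and its nilpotency does not follow from that of the summands by linear algebra alone — in particular the naive polarization of $\operatorname{tr}(R(z)^2)=0$ only combines tuples differing in a single slot and does not by itself force $\operatorname{tr}(R(x)R(y))=0$ when $x,y$ differ in several slots. The route I would take is to feed the nilpotency of the generators into an Engel-type induction through the commutator rule: because each $R(y)$ is nilpotent on $L$, the displayed formula makes $\operatorname{ad}R(y)$ distribute $R(y)$ over the tuple entries, so $(\operatorname{ad}R(y))^M=0$ once $M$ exceeds $(n-1)$ times the nilpotency index, i.e.\ $\mathcal{R}$ is generated by $\operatorname{ad}$-nilpotent elements; combining this with the nilpotent action on $L$, one produces by induction on $\dim L$ a nonzero common kernel $Z_1=\{z\in L:[z,L,\dots,L]=0\}$, which supplies the bottom step $V_1$ of the flag. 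A delicate point here is that $Z_1$ is only a one-sided ($1$-)ideal and need not be a full ideal, so the descent cannot be run naively on the quotient $L/Z_1$; instead one iterates inside the filtration of $\operatorname{End}(L)$ determined by $\mathcal{R}$, equivalently on the stable term $\bigcap_k L^{<k>_1}$, and shows it must vanish. Proving that this stable term is $0$ — that the individually nilpotent yet collectively surjective family $\{R(x)\}$ cannot act faithfully on a nonzero idempotent piece — is the heart of the matter and is forced by the fundamental identity \eqref{FI}, not merely by nilpotency of the generators.
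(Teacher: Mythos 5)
Your forward implication is correct and complete: the induction $R(x)^{m}(L)\subseteq L^{<m+1>_1}$ is exactly right, so $L^{<k>_1}=0$ forces $R(x)^{k-1}=0$. The converse, however, is not proved, and it is the entire content of the theorem; note also that the paper offers no argument to compare against, since it quotes the result from \cite{cartan} (where the hard half is a substantive theorem extending Kasymov's Engel theorem for $n$-Lie algebras). Your reduction of the converse is sound as far as it goes: $\mathcal{R}$ is a Lie subalgebra of $\operatorname{End}(L)$, products satisfy $\mathcal{A}_m\cdot L=L^{<m+1>_1}$, so it suffices that every element of $\mathcal{R}$ --- every \emph{sum} of right multiplications --- act nilpotently. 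The gap is the mechanism you propose for that last claim. The principle you invoke --- that a Lie algebra of operators generated by elements which are both nilpotent on the underlying space and $\operatorname{ad}$-nilpotent must admit a nonzero common kernel --- is false in general: $e,f\in\mathfrak{sl}_2(\mathbb{C})$ acting on $\mathbb{C}^2$ are nilpotent operators, are $\operatorname{ad}$-nilpotent, and generate all of $\mathfrak{sl}_2(\mathbb{C})$, which contains the non-nilpotent element $h=[e,f]$ and acts irreducibly, hence has no common kernel at all. Classical Engel needs nilpotency of \emph{all} elements of the Lie algebra; Jacobson's refinement needs a weakly closed nil set; and for $n\geq 3$ the set $\{R(x):x\in L^{\times(n-1)}\}$ is neither known to satisfy the former nor is it weakly closed, precisely because $x\mapsto R(x)$ is multilinear rather than linear, so its image is not a subspace and $[R(y),R(x)]$ is a sum of $n-1$ right multiplications that need not be a single one.

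Your fallback --- ``iterate on the stable term $\bigcap_k L^{<k>_1}$ and show it must vanish'' --- is a restatement, not a repair: in finite dimension the descending chain $L^{<k>_1}$ stabilizes at that intersection, so its vanishing \emph{is} $1$-nilpotency, i.e.\ the claim to be proved. You diagnose this correctly when you say the step is ``the heart of the matter'' and must be forced by identity \eqref{FI} rather than by nilpotency of the generators, but the diagnosis is never acted on: nowhere is \eqref{FI} applied to a sum of right multiplications whose tuples differ in several slots, which is the one place where Leibniz $n$-algebras with $n\geq 3$ genuinely depart from the binary case. As written, the hard direction of the theorem has been reduced to an unproven assertion of the same strength, so the proposal does not constitute a proof.
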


In \cite{cartan} it was given an example of a Leibniz $n$-algebra
which admits a non-degenerated right multiplication operator.
This is the significant difference between Leibniz $n$-algebras ($n\geq
3$) on one hand and  Leibniz algebras and Lie $n$-algebras on the other.

Below we assume that all right multiplication operators are
degenerated.

The following lemma yields a decomposition of a given vector space
into a direct sum of two subspaces which are invariant with
respect to a given linear transformation.

\begin{lm}[Fitting Lemma] Let $V$ be a vector space and
$A:V\to V$ be a linear transformation. Then $V= V_{0A}\oplus
V_{1A}$, where $A(V_{0A})\subseteq V_{0A}$, $A(V_{1A})\subseteq
V_{1A}$ and $V_{0A}=\{ v \in V|\ A^i(v)=0 \ \mbox{for some} \ i \}$ and
$V_{1A}=\bigcap\limits_{i=1}^\infty A^i(V)$. Moreover, $A_{|
V_{0A}}$ is a nilpotent transformation and $A_{|V_{1A}}$ is an
automorphism. $V_{0A}$ is called the Fitting null-component of $V$ with
respect to $A$.
\end{lm}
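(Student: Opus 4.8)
The plan is to prove this finite-dimensional linear algebra fact by exploiting the stabilization of the chains of iterated kernels and images of $A$. First I would observe that the ascending chain $\Ker A\subseteq \Ker A^2\subseteq\cdots$ and the descending chain $A(V)\supseteq A^2(V)\supseteq\cdots$ must both stabilize, since $V$ is finite dimensional: there is an integer $N$ with $\Ker A^N=\Ker A^{N+j}$ and $A^N(V)=A^{N+j}(V)$ for all $j\geq 0$. With this $N$ fixed, I would identify the two Fitting components concretely as $V_{0A}=\Ker A^N$ and $V_{1A}=A^N(V)$, and note that the infinite intersection defining $V_{1A}$ collapses to $A^N(V)$ precisely because of stabilization.

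The next step is to verify $A$-invariance of both pieces. For $V_{0A}$ this is immediate: if $A^N(v)=0$ then $A^N(A(v))=A(A^N(v))=0$. For $V_{1A}$ it follows from $A(A^N(V))=A^{N+1}(V)\subseteq A^N(V)$. I would then establish the direct-sum decomposition in two parts. For $V=V_{0A}+V_{1A}$, given $v\in V$ I use $A^N(v)\in A^N(V)=A^{2N}(V)$ to write $A^N(v)=A^{2N}(w)$; then $v-A^N(w)\in\Ker A^N=V_{0A}$ while $A^N(w)\in V_{1A}$, so $v=(v-A^N(w))+A^N(w)$ lies in the sum. For triviality of the intersection, if $v\in V_{0A}\cap V_{1A}$ then $v=A^N(u)$ and $A^N(v)=0$, hence $A^{2N}(u)=0$, so $u\in\Ker A^{2N}=\Ker A^N$ by stabilization, forcing $v=A^N(u)=0$.

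Finally I would read off the two structural claims. Since $A^N$ annihilates $V_{0A}=\Ker A^N$, the restriction $A_{|V_{0A}}$ is nilpotent. Since $V_{1A}$ is finite dimensional and $A$-invariant, it suffices to check that $A_{|V_{1A}}$ is injective: any $v\in V_{1A}$ with $A(v)=0$ lies in $V_{0A}\cap V_{1A}=0$, so $A_{|V_{1A}}$ is injective and therefore an automorphism. The only genuinely delicate point is the stabilization of the chains, which is where finite dimensionality of $V$ is used; once that is in hand the remaining steps are routine.
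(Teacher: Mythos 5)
Your proof is correct. Note that the paper itself gives no argument for this lemma: its ``proof'' is a citation to Jacobson's \emph{Lie algebras} (Chapter II, \S 4), and the argument found there is essentially the one you give --- stabilization of the chains $\Ker A\subseteq \Ker A^2\subseteq\cdots$ and $A(V)\supseteq A^2(V)\supseteq\cdots$, identification of the Fitting components with $\Ker A^N$ and $A^N(V)$, and then the routine verification of invariance, the direct sum decomposition, nilpotency on the null component and invertibility on the one component. So you have supplied in full the details that the paper delegates to the reference, by the same route.

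One caveat worth flagging: the lemma as stated in the paper says only ``vector space'', whereas your argument assumes $V$ is finite dimensional --- and this assumption is not removable. Without it the stabilization step fails and the statement itself is false: for the right-shift operator $A$ on the space of sequences $(a_1,a_2,\dots)$, $A$ is injective, so $V_{0A}=0$, while $\bigcap_i A^i(V)=0$ as well, so $V_{0A}\oplus V_{1A}\neq V$. Since the paper applies the lemma only to finite-dimensional algebras (and Jacobson states it in that setting), your reading is the intended one, but it would be proper to state the hypothesis explicitly at the start of the proof rather than only remarking at the end that finite dimensionality is ``where it is used''.
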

\begin{proof}
See \cite[Chapter II, \S 4]{Jac}.
\end{proof}


\begin{defn} An element $h \in L^{\times (n-1)}$ is said to be regular
for the algebra $L$ if the dimension of the Fitting null-component of the space $L$ with respect to $R(h)$ is minimal.
\end{defn}

\begin{lm}[\cite{onnilpotent}]\label{decomposition} Let $L$ be a finite dimensional complex Leibniz n-algebra with
given derivation $d$, and let $L=L_{\alpha}\oplus
L_{\beta}\oplus\cdots\oplus L_{\gamma}$ be the decomposition of
the algebra $L$ into root spaces with respect to the derivation
$d$ (i.e. $L_{\alpha}=\{x\in L | \ (d-\alpha I)^k x=0  \  \mbox{for
some} \  k \}$). Then
\[[L_{\alpha_1},L_{\alpha_2},\dots,L_{\alpha_n}]\subseteq\left\{\begin{array}{ll}
0 & \mbox{if } \alpha_1+\alpha_2+\cdots+\alpha_n \mbox{ is not a root of d}\\
L_{\alpha_1+\alpha_2+\cdots+\alpha_n} & \mbox{if }
\alpha_1+\alpha_2+\cdots+\alpha_n \mbox{ is a root of d} \, .
\end{array}\right.\]
\end{lm}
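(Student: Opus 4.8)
The plan is to reduce the statement to a single computation: showing that for root vectors $x_i \in L_{\alpha_i}$ the product $[x_1,\dots,x_n]$ is annihilated by a sufficiently high power of $(d-\lambda I)$, where $\lambda = \alpha_1+\cdots+\alpha_n$. Once this is established, $[x_1,\dots,x_n]$ lies in the generalized eigenspace $L_\lambda=\{z\in L : (d-\lambda I)^k z=0 \text{ for some } k\}$. Since over $\mathbb{C}$ the space decomposes as $L=\bigoplus_\mu L_\mu$ with $\mu$ ranging over the actual roots of $d$, there are two cases: either $\lambda$ is a root and $[x_1,\dots,x_n]\in L_\lambda=L_{\alpha_1+\cdots+\alpha_n}$, or $\lambda$ is not a root, in which case $L_\lambda=0$ and the product must vanish. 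Because the bracket is multilinear and $L_\lambda$ is a subspace, checking the containment on the generators $[x_1,\dots,x_n]$ suffices to deduce it for the whole subspace $[L_{\alpha_1},\dots,L_{\alpha_n}]$.

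The technical heart is a multinomial Leibniz rule for iterating $d$ on the $n$-ary bracket. I would package it as follows. Write $\mu\colon L^{\otimes n}\to L$ for the bracket and, for $1\le i\le n$, let $d_i$ be the operator on $L^{\otimes n}$ that acts as $d$ in the $i$-th factor and as the identity in the others; set $D=\sum_{i=1}^n d_i$. The derivation identity is precisely the intertwining relation $d\circ\mu=\mu\circ D$, from which $p(d)\circ\mu=\mu\circ p(D)$ follows for every polynomial $p$ by a short induction on the degree. Taking $p(t)=(t-\lambda)^m$ and noting that the operators $e_i:=d_i-\alpha_i\,\mathrm{Id}$ commute pairwise with $\sum_i e_i=D-\lambda\,\mathrm{Id}$, the ordinary multinomial theorem yields
\[
(d-\lambda I)^m[x_1,\dots,x_n]=\sum_{k_1+\cdots+k_n=m}\binom{m}{k_1,\dots,k_n}\,[(d-\alpha_1 I)^{k_1}x_1,\dots,(d-\alpha_n I)^{k_n}x_n].
\]

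To finish I would invoke finite dimensionality: on each root space $(d-\alpha_i I)$ is nilpotent, so there is an $N$ with $(d-\alpha_i I)^N x_i=0$ for the chosen vectors. Taking $m=n(N-1)+1$, a pigeonhole count shows that in every multi-index with $k_1+\cdots+k_n=m$ at least one $k_j\ge N$, so the corresponding bracket has a vanishing entry and the entire sum collapses; hence $(d-\lambda I)^m[x_1,\dots,x_n]=0$, which is exactly what the first paragraph requires.

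I expect the main obstacle to be bookkeeping rather than anything conceptual, namely establishing the displayed multinomial identity with the correct coefficients. The tensor-operator formulation above is designed to avoid a direct induction on $m$ and $n$ simultaneously, but one must still verify carefully that $d\circ\mu=\mu\circ D$ is a faithful restatement of the $n$-ary derivation rule and that $e_i^{k_i}$ acts on a decomposable tensor by inserting $(d-\alpha_i I)^{k_i}$ in the $i$-th slot. Everything else, namely the eigenspace reasoning and the pigeonhole estimate, is routine linear algebra over $\mathbb{C}$.
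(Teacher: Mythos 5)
Your proof is correct. The paper itself offers no proof of this lemma (it is imported verbatim from \cite{onnilpotent}), but your argument is exactly the classical one underlying that citation: the shifted multinomial Leibniz rule, whose unshifted case is the paper's own formula \eqref{der}, followed by the pigeonhole count on the exponents; your tensor-operator derivation of that identity (via $d\circ\mu=\mu\circ D$ and commuting shifts $e_i=d_i-\alpha_i\,\mathrm{Id}$) is a clean but purely stylistic repackaging of the usual induction on $m$.
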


\begin{pr} \label{zerosum}In a Leibniz $n$-algebra $L$ any right multiplication operator
 $ R(a_2,\dots,a_n)$ is a sum of right multiplication operators with zero
 root space with respect to $ R(a_2,\dots,a_n)$.
\end{pr}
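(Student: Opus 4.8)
The plan is to expand $R(a_2,\dots,a_n)$ along the root--space grading that the operator induces on $L$ itself, and to show that only the grade--preserving pieces survive. First I would set $d=R(a_2,\dots,a_n)$ and recall that $d$ is a derivation of $L$; working in the finite--dimensional complex setting of Lemma \ref{decomposition}, this yields $L=\bigoplus_{\mu}L_{\mu}$, the direct sum of the root spaces $L_\mu=\{x\in L: (d-\mu I)^k x = 0 \text{ for some } k\}$. I would then split each multiplicand, $a_i=\sum_{\mu}a_i^{(\mu)}$ with $a_i^{(\mu)}\in L_\mu$ for $i=2,\dots,n$, and use the multilinearity of right multiplication to obtain the finite expansion
\[ R(a_2,\dots,a_n)=\sum_{(\mu_2,\dots,\mu_n)}R\bigl(a_2^{(\mu_2)},\dots,a_n^{(\mu_n)}\bigr). \]

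Next I would determine the root of each summand. Fix $b=(b_2,\dots,b_n)$ with $b_i=a_i^{(\mu_i)}\in L_{\mu_i}$, and put $\sigma=\mu_2+\cdots+\mu_n$. By Lemma \ref{decomposition}, for $z\in L_{\mu_1}$ we have $R(b)(z)=[z,b_2,\dots,b_n]\in L_{\mu_1+\sigma}$ (and $=0$ when $\mu_1+\sigma$ is not a root), so $R(b)$ shifts the grading by $\sigma$. This is exactly the statement that $R(b)$ lies in the $\sigma$-root space with respect to $R(a)$, and ``zero root space with respect to $R(a)$'' means $\sigma=0$, i.e. $R(b)$ preserves every $L_\mu$.

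The decisive step is to show that the summands with $\sigma\neq 0$ cancel. Here I would use that $d=R(a)$ preserves its own root spaces, $d(L_{\mu_1})\subseteq L_{\mu_1}$. Grouping the expansion by the value of $\sigma$, write $R(a)=\sum_{\sigma}S_\sigma$, where $S_\sigma=\sum_{\mu_2+\cdots+\mu_n=\sigma}R(a_2^{(\mu_2)},\dots,a_n^{(\mu_n)})$. For $z\in L_{\mu_1}$ we then have $R(a)(z)=d(z)\in L_{\mu_1}$ on the one hand, and $R(a)(z)=\sum_\sigma S_\sigma(z)$ with $S_\sigma(z)\in L_{\mu_1+\sigma}$ on the other. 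Comparing components in the direct sum $L=\bigoplus_\mu L_\mu$ forces $S_\sigma(z)=0$ whenever $\sigma\neq 0$; since $\mu_1$ and $z\in L_{\mu_1}$ are arbitrary, $S_\sigma=0$ for all $\sigma\neq 0$. Hence
\[ R(a_2,\dots,a_n)=S_0=\sum_{\mu_2+\cdots+\mu_n=0}R\bigl(a_2^{(\mu_2)},\dots,a_n^{(\mu_n)}\bigr), \]
a sum of right multiplication operators each with zero root space with respect to $R(a_2,\dots,a_n)$. Equivalently and more slickly, since $[R(a),R(a)]=0$ the element $R(a)$ lies in the kernel, hence in the zero root space, of $\operatorname{ad}_{R(a)}$ acting on the Lie ideal $R(L)\subseteq\Der(L)$, so its nonzero root components must vanish.

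The main obstacle I anticipate is the bookkeeping in the middle step: correctly reading off the root $\sigma$ of a mixed summand $R(a_2^{(\mu_2)},\dots,a_n^{(\mu_n)})$ from Lemma \ref{decomposition}, and making sure the cancellation of the $\sigma\neq 0$ parts is genuine rather than an artifact of a particular basis. The conceptual point that makes everything work is simply that $R(a)$ commutes with itself and therefore cannot move any vector out of its own root space; the remainder is linear algebra in the graded decomposition. One should also keep in mind that the roots $\mu$ need not be real or nonzero, and that the degenerate case of Lemma \ref{decomposition} (when $\mu_1+\cdots+\mu_n$ is not a root) is harmless, since it only produces additional vanishing terms.
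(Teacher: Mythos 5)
Your proof is correct and follows essentially the same route as the paper: decompose $L$ into root spaces of $R(a_2,\dots,a_n)$ itself, split each $a_i$ into root components, expand by multilinearity, read off the weight of each summand via Lemma \ref{decomposition}, and use the fact that $R(a_2,\dots,a_n)$ preserves its own root spaces to force the nonzero-weight part to vanish. Your refinement of grouping by individual weights $\sigma$ (rather than the paper's single lump $C$ of all nonzero-weight terms) is a slightly cleaner rendering of the same cancellation argument, and your closing remark via $\operatorname{ad}_{R(a)}$ on $R(L)\subseteq\Der(L)$ is a valid alternative shortcut, not a gap.
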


\begin{proof}
Let $ \alpha_0=0, \alpha_1, \dots, \alpha_k$ be the eigenvalues of
$ R(a_2,\dots,a_n)$. Then $L$ is decomposed into a direct sum
\[L=L_0 \oplus L_{\alpha_1} \oplus \dots \oplus L_{\alpha_k},\] where $
L_{\alpha_i}=\{x\, | \,(R(a_2,\dots,a_n)-\alpha_i I)^m(x)=0 \mbox{
for some } m \in \mathbb{N}\}$.

Consider $ a_i=a_{0}^i+a_{\alpha_1}^i+\dots+a_{\alpha_k}^i,
\,a_{\alpha_m}^i \in L_m, \,  2 \leq i \leq k$. Then for all $ x
\in L$, we have
\begin{multline*}
R(a_2,\dots,a_n)(x)=[x,a_2,\dots , a_n]=
[x,a_{0}^2+a_{\alpha_1}^2+\dots+a_{\alpha_k}^2,\dots ,
a_{0}^n+a_{\alpha_1}^n+\dots+a_{\alpha_k}^n] \\
=[x,a_{0}^2,\dots,a_{0}^n]+[x,a_{\alpha_1}^2,a_0^3\dots,a_{0}^n]+\dots
+[x,a_{\alpha_k}^2,a_{\alpha_k}^3, \dots,a_{\alpha_k}^n]\\
=R(a_{0}^2,a_{0}^3,\dots,a_{0}^n)(x)+R(a_{\alpha_1}^2,a_{0}^3\dots,a_{0}^n)(x)+
\dots + R(a_{\alpha_k}^2,a_{\alpha_k}^3,
\dots,a_{\alpha_k}^n)(x) \, .
\end{multline*}

By Lemma \ref{decomposition}, we obtain that $R(a_2,\dots,a_n)(x)=B(x)
+C(x)$, where $B$ is a sum of right multiplication operators
with zero weight and $C$ is a sum of right multiplication operators with nonzero weights. Then for any $ x\in
L_{\alpha_i}$, we have
\[C(x)=(R(a_2,\dots,a_n)-B)(x) =R(a_2,\dots,a_n)(x)-B(x) \subseteq
L_{\alpha_i} \, ,\] which holds only if  $ C(x)=0$ since  $C$ adds a
weight. Therefore, $C$ is a zero operator on $L_{\alpha_i}$. Since
$ L=L_0 \oplus L_{\alpha_1}\oplus \dots \oplus L_{\alpha_k}$, we
obtain $C=0$ on $L$.

So, $R(a_2,\dots,a_n)=B$, i.e. is a sum of right multiplication operators  with zero weight with respect to
$R(a_2,\dots,a_n)$.
\end{proof}

In \cite{barnes2} it was proved the following result for left
Leibniz algebras which is also valid for right Leibniz algebras,
i.e. Leibniz 2-algebras.

\begin{lm}[\cite{barnes2}] In a Leibniz algebra $L$ for any $a\in L$ there exists
$b\in L_0(R_a)$ such that $L_0(R_b)=L_0(R_a)$.
\end{lm}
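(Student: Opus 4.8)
The plan is to build the required $b$ explicitly as the null-component of $a$ in the root-space decomposition determined by $R_a$, and then to show that replacing $a$ by this component leaves the right multiplication operator completely unchanged.

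First I would apply Lemma~\ref{decomposition} to the derivation $d=R_a$ (every right multiplication is a derivation), obtaining the root-space decomposition $L=\bigoplus_{\alpha}L_{\alpha}$ with respect to $R_a$. Its zero root space is precisely the Fitting null-component, so I set $L_0(R_a)=L_0$, write $K:=\bigoplus_{\alpha\neq 0}L_{\alpha}$ so that $L=L_0\oplus K$, and expand $a=\sum_{\alpha}a_{\alpha}$ with $a_{\alpha}\in L_{\alpha}$. The candidate is then $b:=a_0\in L_0=L_0(R_a)$, which by construction already lies in $L_0(R_a)$, as demanded. Here I would stress the feature that distinguishes the Leibniz case from the Lie case: in a Lie algebra $[a,a]=0$ forces $a$ to lie in its own null-component and one may simply take $b=a$, whereas in a Leibniz algebra $[a,a]$ need not vanish, so $a$ itself may fail to lie in $L_0(R_a)$ and one is obliged to pass to the component $a_0$.

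The heart of the argument is the identity $R_a=R_{a_0}$, equivalently $R_{a_{\alpha}}=0$ for every $\alpha\neq 0$. To prove it I would fix a root $\gamma$ and an element $x\in L_{\gamma}$. By Lemma~\ref{decomposition} (in the binary case $n=2$) one has $[x,a_{\alpha}]\in L_{\gamma+\alpha}$ for each $\alpha$, this product being $0$ whenever $\gamma+\alpha$ is not a root. On the other hand $L_{\gamma}$ is the generalized $\gamma$-eigenspace of the operator $R_a$ and is therefore $R_a$-invariant, so $R_a(x)=\sum_{\alpha}[x,a_{\alpha}]$ must itself lie in $L_{\gamma}$. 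Since the decomposition $L=\bigoplus_{\alpha}L_{\alpha}$ is direct and $\gamma+\alpha=\gamma$ only for $\alpha=0$, comparing components forces $[x,a_{\alpha}]=0$ for all $\alpha\neq 0$. As $\gamma$ and $x$ are arbitrary this yields $R_{a_{\alpha}}=0$ for every $\alpha\neq 0$, and hence $R_a=\sum_{\alpha}R_{a_{\alpha}}=R_{a_0}=R_b$ by linearity of right multiplication in its argument.

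Once $R_b=R_a$ as operators on $L$, their Fitting null-components coincide and $L_0(R_b)=L_0(R_a)$ follows at once. I expect the only delicate point to be the bookkeeping in the middle step, namely the simultaneous use of the $R_a$-invariance of each generalized eigenspace $L_{\gamma}$ and of the directness of the grading: this is exactly what upgrades the coarse Fitting splitting $L=L_0\oplus K$ (which by itself only controls the products $[x,a_{\alpha}]$ for $x\in L_0$, and even there only as an aggregate) to the full separated vanishing $[x,a_{\alpha}]=0$ for all $x$ and all $\alpha\neq 0$. Everything else is formal.
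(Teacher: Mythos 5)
Your proof is correct. Strictly speaking, the paper never proves this lemma---it is quoted from \cite{barnes2} without argument---but the route you take is exactly the paper's own technique from Proposition \ref{zerosum} and Corollary \ref{eigen}, specialized to $n=2$: decompose $a=\sum_{\alpha}a_{\alpha}$ into root components with respect to $R_a$, use Lemma \ref{decomposition} together with the $R_a$-invariance of each $L_{\gamma}$ and the directness of the root-space sum to force $[x,a_{\alpha}]=0$ for all $x\in L_{\gamma}$ and $\alpha\neq 0$, conclude $R_a=R_{a_0}$, and take $b=a_0$. Your argument also isolates exactly why the paper must impose the eigenvalue hypothesis in Corollary \ref{eigen} for $n\geq 3$ while the binary case needs none: for $n\geq 3$ the zero-weight part $B$ may contain cross terms $R(a^2_{\beta_2},\dots,a^n_{\beta_n})$ with $\beta_2+\dots+\beta_n=0$ but individual $\beta_i\neq 0$, whose arguments do not lie in $L_0$, whereas for $n=2$ each summand $R_{a_{\alpha}}$ carries weight exactly $\alpha$, so zero weight forces $\alpha=0$ and the hypothesis is vacuous. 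The one thing your route costs, compared with Barnes' original proof, is generality of the ground field: the root-space decomposition you invoke (Lemma \ref{decomposition} is stated for finite-dimensional \emph{complex} algebras) requires the characteristic polynomial of $R_a$ to split, whereas Barnes obtains the same element $b=a_0$ over an arbitrary field using only the Fitting decomposition $L=L_0(R_a)\oplus L_1(R_a)$ and the algebraic properties of the map $x\mapsto R_x$ (one shows directly that the component $c$ of $a$ lying in $L_1(R_a)$ satisfies $R_c=0$, without ever mentioning roots). Within the paper's standing conventions this restriction is harmless, but it should be flagged if you want the lemma in the generality in which it is cited.
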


Concerning this lemma we establish the following result for the
case $n\geq 3$.

\begin{cor}\label{eigen} If the nonzero eigenvalues $\alpha_1, \dots ,
\alpha_k$ of the right multiplication operator $R(a_2,\dots, a_n)$ in a
Leibniz $n$-algebra $(n\geq 3)$ satisfy
\[\mu_1\alpha_1+\mu_2 \alpha_2+\dots +\mu_k \alpha_k\neq
0,\] for all non-negative integers $ \mu_1,\dots, \mu_k$ such that
\[0<\mu_1+\dots+\mu_k \leq n-1,\]
then there exist $ b_2,b_3,\dots, b_n \in L_0(R(a_2,\dots, a_n))$
such that
\[L_0(R(b_2,\dots,
b_n))=L_0(R(a_2,\dots, a_n)).\]
\end{cor}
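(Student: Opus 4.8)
The plan is to reduce everything to Proposition~\ref{zerosum} by showing that, under the stated eigenvalue condition, the operator $R(a_2,\dots,a_n)$ is in fact \emph{equal} to $R(a_0^2,\dots,a_0^n)$, where $a_0^i$ denotes the component of $a_i$ in the zero root space $L_0=L_0(R(a_2,\dots,a_n))$. If this identity holds, then the choice $b_i:=a_0^i\in L_0$ immediately yields $R(b_2,\dots,b_n)=R(a_2,\dots,a_n)$, so the two operators share the same Fitting null-component, which is precisely the assertion.

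To carry this out, first I would fix the root space decomposition $L=L_0\oplus L_{\alpha_1}\oplus\dots\oplus L_{\alpha_k}$ of $L$ with respect to $d:=R(a_2,\dots,a_n)$ and write each argument as $a_i=a_0^i+a_{\alpha_1}^i+\dots+a_{\alpha_k}^i$ with $a_{\alpha_m}^i\in L_{\alpha_m}$, exactly as in the proof of Proposition~\ref{zerosum}. Expanding $R(a_2,\dots,a_n)$ by multilinearity produces a sum of operators $R(a_{\alpha_{m_2}}^2,\dots,a_{\alpha_{m_n}}^n)$, and by Lemma~\ref{decomposition} each such summand sends $L_\gamma$ into $L_{\gamma+\alpha_{m_2}+\dots+\alpha_{m_n}}$; that is, it is a weight vector of weight $\alpha_{m_2}+\dots+\alpha_{m_n}$ for the adjoint action of $d$. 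Proposition~\ref{zerosum} guarantees that only the summands of weight $0$ survive, so the task is to identify them.

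The key step is the weight bookkeeping. A summand $R(a_{\alpha_{m_2}}^2,\dots,a_{\alpha_{m_n}}^n)$ has exactly $n-1$ entries, so its weight can be written as $\mu_1\alpha_1+\dots+\mu_k\alpha_k$, where $\mu_j\geq 0$ counts how many of the indices $m_2,\dots,m_n$ equal $j$; consequently $\mu_1+\dots+\mu_k\leq n-1$. The hypothesis asserts precisely that such a combination vanishes only when $\mu_1=\dots=\mu_k=0$, that is, a weight-$0$ summand forces every chosen component to lie in $L_0$. Hence the only surviving term is $R(a_0^2,\dots,a_0^n)$, and Proposition~\ref{zerosum} gives $R(a_2,\dots,a_n)=R(a_0^2,\dots,a_0^n)$.

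Setting $b_i:=a_0^i$ for $2\leq i\leq n$ then completes the argument, since these elements lie in $L_0(R(a_2,\dots,a_n))$ and the two operators literally coincide, whence so do their null-components. I expect the only delicate point to be the bookkeeping of the third paragraph: one must verify that the number of non-zero root-space components in any single summand never exceeds $n-1$, so that the eigenvalue hypothesis --- stated exactly in the range $0<\mu_1+\dots+\mu_k\leq n-1$ --- is applicable and rules out every cross term. Everything else is a direct application of the already-established Proposition~\ref{zerosum}.
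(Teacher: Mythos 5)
Your proposal is correct and follows essentially the same route as the paper's own proof: apply Proposition~\ref{zerosum} to reduce $R(a_2,\dots,a_n)$ to the sum $B$ of zero-weight summands, observe that the eigenvalue hypothesis forces the only zero-weight summand to be $R(a_0^2,\dots,a_0^n)$, and take $b_i=a_0^i$. The weight bookkeeping you spell out in your third paragraph (each cross term has weight $\mu_1\alpha_1+\dots+\mu_k\alpha_k$ with $0<\mu_1+\dots+\mu_k\leq n-1$, hence nonzero) is exactly the step the paper compresses into the sentence ``From the condition on the eigenvalues we conclude that $B$ consists of just one right multiplication operator.''
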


\begin{proof}
From Proposition \ref{zerosum} we obtain that $ R(a_2,\dots, a_n) = B$.
From the condition on the eigenvalues we conclude that $B$ consists of
just one right multiplication operator, namely
$B=R(a_{0}^2,a_{0}^3,\dots,a_{0}^n)$. So, if we take  $b_i= a_0^i$,
we obtain $ L_0(R(b_2,\dots,b_n))=L_0(R(a_2,\dots, a_n))$.
\end{proof}

A Leibniz $n$-algebra satisfying the conditions of Corollary
\ref{eigen} is given in the following
\begin{exam}[\cite{cartan}] Consider a Leibniz $n$-algebra $L=\langle e_1,e_2,\dots,e_n\rangle$
with the following multiplication:
\[[e_k,e_1,\dots,e_1]=e_k \,\,\,\,\, (2 \leq k \leq m).\]

The right multiplication operator  $R(e_1,\dots,e_1)$ has only two
eigenvalues: $0$ and $1$. It is easy to see that the conditions of
Corollary \ref{eigen} are satisfied and $e_1\in
L_0(R(e_1,\dots,e_1))$.
\end{exam}

Below, we present an example which shows the sufficiency of the
condition in Corollary \ref{eigen}.

\begin{exam} Consider an $m$ dimensional Leibniz $n$-algebra $L$ with
the following multiplication:
\[\begin{array}{rcl}
  [e_k,e_1,e_2\dots,e_{n-1}] & = & \alpha_k e_k \\
 \, [e_{k+1},e_1,e_2,\dots, e_{n-1}] & = & \alpha_{k+1} e_{k+1} \\
& \vdots &  \\
 \, [e_m,e_1,e_2, \dots, e_{n-1}] & = & \alpha_m e_m \\
\end{array}\]

where $ \{e_1,\dots, e_m\}$ is a basis, $ k < n-1$ and
 \ $\displaystyle \sum_{i=k}^{n-1}\alpha_i=0, \alpha_k\cdots \alpha_m \neq 0$.

Then $ L_0(R(e_1,\dots, e_{n-1}))=\{e_1,\dots, e_{k-1}\}$. Since
any other right multiplication operator either coincides with
$R(e_1,\dots, e_{n-1})$ or is identically zero, there does not
exist $b_2,\dots , b_n \in L_0(R(e_1,\dots, e_{n-1}))$ such that $
L_0(R(b_2, \dots, b_n))=L_0(R(e_1,\dots, e_{n-1}))$.
\end{exam}

\begin{defn}[\cite{cartan}] Given a subset $X$ in a Leibniz $n$-algebra, the
$s$-normalizer of $X$ is the set \[N_s(X)=\{a\in L \,|\,
[x_1,\dots,x_{s-1},a,x_{s+1},\dots,x_n]\in X \textrm{ for all }
x_i\in X\}.\] The set $\displaystyle N(X)=\bigcap_{s=1}^n N_s(X)$
is called the normalizer of $X$.
\end{defn}
Notice that, if $X$ is a subalgebra of $L$, then $N(X),
N_s(X)\supseteq X$.

\begin{lm}[\cite{cartan}] \label{desc_lem} Let $M$ be an invariant subspace of a vector space $L$
with respect to a linear transformation $Q:L\to L$. Let
 $x=x_0+x_{\alpha}+x_{\beta}+\dots+x_{\gamma}$ be any  decomposition of an element $x$
into a sum of characteristic vectors from the corresponding
characteristic spaces $L_\xi (\xi \in \{0,\alpha,\beta,\dots,
\gamma\})$. If $Q(x)\in M$, then $x-x_0 \in M$.
\end{lm}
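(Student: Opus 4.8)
The plan is to work with the primary (characteristic-space) decomposition $L=L_0\oplus L_\alpha\oplus\cdots\oplus L_\gamma$ determined by $Q$, and to exploit the fact that an invariant subspace is automatically invariant under every polynomial in $Q$. Write $L'=L_\alpha\oplus\cdots\oplus L_\gamma$ for the sum of the characteristic spaces attached to the nonzero eigenvalues, so that $x-x_0\in L'$ and the restriction $Q|_{L'}$ is invertible, its only eigenvalues being the nonzero $\alpha,\beta,\dots,\gamma$. Writing $y=x-x_0$, the assertion $y\in M$ will follow once I show, first, that $Q(y)\in M$, and second, that one may invert $Q$ on $L'$ without leaving $M$.

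The key preparatory observation is that both the projection $\pi$ of $L$ onto $L'$ along $L_0$ and the inverse of $Q|_{L'}$ are realizable as polynomials in $Q$; since $M$ is $Q$-invariant it is then stable under both. For the projection this is the standard fact that the idempotents of the primary decomposition are polynomials in $Q$. For the partial inverse, let $\mu(t)=t^{m_0}h(t)$ be the minimal polynomial of $Q$, where $h(0)\neq 0$ gathers the factors $(t-\xi)^{m_\xi}$ with $\xi\neq 0$; since $h(Q)$ annihilates $L'$ and $\gcd(t,h(t))=1$, B\'ezout furnishes polynomials $p,q$ with $t\,p(t)+h(t)\,q(t)=1$, whence $p(Q)\,Q=I$ on $L'=\Ker h(Q)$.

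With these tools the two steps are immediate. For the first, apply $\pi$ to $Q(x)\in M$: since $\pi$ commutes with $Q$ and $\pi(x)=x-x_0=y$, I get $Q(y)=\pi(Q(x))\in M$. For the second, note that $y\in L'$ forces $Q(y)\in L'$, so $p(Q)\bigl(Q(y)\bigr)=y$ by the identity $p(Q)Q=I$ on $L'$; as $Q(y)\in M$ and $M$ is $p(Q)$-invariant, I conclude $y=x-x_0\in M$.

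I expect the only real obstacle to be the linear-algebra bookkeeping that exhibits the two operators---the projection that kills the $L_0$-component and the inverse of $Q$ on $L'$---as polynomials in $Q$, so that invariance of $M$ transfers to them; this rests entirely on $0$ not being an eigenvalue of $Q|_{L'}$, i.e.\ on $h(0)\neq 0$. A slightly different but equivalent route avoids naming the polynomials: one first checks the compatibility $M=\bigoplus_\xi (M\cap L_\xi)$ (again because the primary idempotents are polynomials in $Q$), and then observes that for each $\xi\neq 0$ the injective operator $Q|_{L_\xi}$ maps the finite-dimensional invariant subspace $M\cap L_\xi$ bijectively onto itself, so the component $x_\xi$, being the unique preimage of $Q(x_\xi)\in M\cap L_\xi$, lies in $M$; summing over $\xi\neq 0$ yields $x-x_0\in M$. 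In either formulation the heart of the matter is the compatibility of invariant subspaces with the characteristic-space decomposition.
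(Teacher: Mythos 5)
The paper itself contains no proof of this lemma: it is imported verbatim from \cite{cartan}, so there is no internal argument to compare yours against, and the only question is whether your proof stands on its own. It does; both of your routes are correct. The engine is the right one: a $Q$-invariant subspace is invariant under every polynomial in $Q$; the projection $\pi$ onto $L'=L_\alpha\oplus\cdots\oplus L_\gamma$ along $L_0$ is such a polynomial (primary idempotents), and B\'ezout, using $h(0)\neq 0$, gives $p$ with $p(Q)Q=\mathrm{id}$ on $L'=\Ker h(Q)$; then indeed $Q(x-x_0)=\pi(Q(x))\in M$ and $x-x_0=p(Q)\bigl(Q(x-x_0)\bigr)\in M$. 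Two remarks. First, your proof implicitly assumes $\dim L<\infty$ (existence of the minimal polynomial; finite dimensionality of $M\cap L_\xi$ in the second route), which the bare statement of the lemma does not grant. This is harmless: the lemma is applied in the paper only to finite-dimensional algebras, and in general one reduces to the finite-dimensional case by replacing $L$ with the $Q$-invariant subspace $V=\sum_\xi\mathrm{span}\{(Q-\xi I)^j x_\xi \ : \ j\geq 0\}$, which is finite-dimensional because $(Q-\xi I)^{k_\xi}x_\xi=0$, and replacing $M$ with $M\cap V$. Second, a cosmetic shortening of your first route: the projection onto $L'$ can always be written as $c(Q)\,Q$ for some polynomial $c$ (if $0$ is a root of the minimal polynomial, then $\pi(t)=a(t)t^{m_0}$ already has zero constant term; if not, then $t$ is invertible modulo $\mu$), so $x-x_0=c(Q)\bigl(Q(x)\bigr)\in M$ in a single step, with no need for the separate partial inverse.
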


The following lemma is an extension of
\cite[Lemma 3.2]{barnes2} under the condition $a_2,\dots, a_n \in
L_0(R(a_2,\dots,a_n))$.

\begin{lm} Let $L$ be a Leibniz $n$-algebra and
$R(a_2,\dots,a_n): L \to L$  a right multiplication operator  such that $a_2,\dots, a_n \in
L_0(R(a_2,\dots,a_n))$. Then for any subalgebra $U$ containing
$L_0(R(a_2,\dots,a_n))$ the equality $ N(U)=U$ holds.
\end{lm}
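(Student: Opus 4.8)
The plan is to establish the two inclusions $U\subseteq N(U)$ and $N(U)\subseteq U$ separately. The first is immediate: since $U$ is a subalgebra, the remark following the definition of the normalizer already gives $N(U)\supseteq U$, so the whole content lies in proving $N(U)\subseteq U$. Throughout I abbreviate $Q=R(a_2,\dots,a_n)$ and $L_0=L_0(Q)$, and I first record that $U$ is invariant under $Q$: for any $u\in U$ one has $Q(u)=[u,a_2,\dots,a_n]$, which lies in $U$ because $a_2,\dots,a_n\in L_0\subseteq U$ and $U$ is closed under the bracket.

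Next I would take an arbitrary $b\in N(U)$ and exploit only the first-position normalizing condition, i.e. the inclusion $N(U)\subseteq N_1(U)$. By the definition of $N_1(U)$, applied to the tuple $(a_2,\dots,a_n)$ whose entries all lie in $L_0\subseteq U$, we obtain $Q(b)=[b,a_2,\dots,a_n]\in U$. The hypothesis $a_2,\dots,a_n\in L_0(R(a_2,\dots,a_n))$ is used twice: once to guarantee the $Q$-invariance of $U$ recorded above, and once here to feed the $a_i$ into the normalizer condition.

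With $Q(b)\in U$ in hand, I would invoke the descent Lemma \ref{desc_lem} with the $Q$-invariant subspace $M=U$. Writing the characteristic decomposition $b=b_0+b_\alpha+\dots+b_\gamma$ with $b_0\in L_0$ and the remaining components lying in the nonzero characteristic spaces, Lemma \ref{desc_lem} turns $Q(b)\in U$ into $b-b_0\in U$. Since $b_0\in L_0\subseteq U$, it follows that $b=(b-b_0)+b_0\in U$, which is exactly $N(U)\subseteq U$.

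I do not expect a genuine obstacle once the two structural observations are made: that $U$ is $Q$-invariant, and that the single inclusion $N(U)\subseteq N_1(U)$ already suffices (the remaining conditions coming from $N_s$ with $s\geq 2$ are not needed). The real work is delegated to the descent lemma, so no explicit computation with the nonzero root spaces is required; the only mild subtlety is to resist trying to verify $N(U)=U$ through every $N_s$ and instead to notice that the first-position condition alone forces membership in $U$.
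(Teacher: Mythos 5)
Your proposal is correct and follows essentially the same route as the paper's proof: both reduce the problem to the first-position condition, verify that $U$ is invariant under $Q=R(a_2,\dots,a_n)$ using $a_2,\dots,a_n\in L_0\subseteq U$ and the subalgebra property, and then invoke Lemma \ref{desc_lem} with $M=U$ to conclude $b-b_0\in U$ and hence $b\in U$. The only cosmetic difference is that the paper proves the slightly stronger equality $N_1(U)=U$ and then intersects with the remaining normalizers $N_s(U)\supseteq U$, whereas you pass directly from $N(U)\subseteq N_1(U)$ to the conclusion; the substance is identical.
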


\begin{proof}
Let $z\in N_1(U)$. Then $ [z,U,\dots, U] \subseteq U$. Denote
$L_0= L_0(R(a_2,\dots,a_n))$. Then
\[R(a_2,\dots,a_n)(z)=[z,a_2,\dots,a_n]\in [z,L_0,\dots, L_0]
\subseteq [z,U,\dots, U] \subseteq U.\] Hence $ R(a_2,\dots,
a_n)(z) \in U$.

Notice that $ R(a_2,\dots, a_n) (U) =[U,a_2,\dots, a_n]\subseteq
[U,L_0,\dots, L_0]\subseteq [U,U,\dots, U] \subseteq U$ since $U$
is a subalgebra. Therefore, the conditions of Lemma \ref{desc_lem} are
satisfied. Thus $ z-z_0 \in U$. Then $ z\in U$. So we have proved
$ N_1(U)=U$.

Since $U$ is a subalgebra, $N_s(U)\supseteq U$ for all $2\leq s
\leq n$. Then $N(U)=N_1 \bigcap \big(\cap_{s=2}^n
N_s(U)\big)=U$.
\end{proof}

\begin{pr} Let $a_2,\dots, a_n $ be elements  of a Leibniz $n$-algebra $L$  such that
$a_2,\dots, a_n \in L_0(R(a_2,\dots,a_n))$. If every maximal
subalgebra is an $i$- and  a $j$-ideal $(1\leq i\neq j\leq n)$ in $L$,
then $R(a_2,\dots,a_n)$ is nilpotent.
\end{pr}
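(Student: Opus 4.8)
The plan is to prove the statement by contradiction, exploiting the Fitting decomposition of $L$ with respect to $R:=R(a_2,\dots,a_n)$. By the Fitting Lemma, $L=L_0\oplus L_1$, where $L_0:=L_0(R)$ is the Fitting null-component and $R|_{L_1}$ is an automorphism; hence $R$ is nilpotent if and only if $L_0=L$ (equivalently $L_1=0$). So I would assume, for contradiction, that $R$ is \emph{not} nilpotent, which forces $L_0$ to be a proper subspace of $L$.

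First I would check that $L_0$ is in fact a subalgebra. Since $R$ is a derivation, Lemma \ref{decomposition} applies with $d=R$, and the root-space inclusion gives $[L_0,\dots,L_0]\subseteq L_{0+\dots+0}=L_0$. Being a proper subalgebra (here I use the finite-dimensionality in force throughout the section), $L_0$ is contained in some maximal subalgebra $M\subsetneq L$. Because $M\supseteq L_0$ is a subalgebra and $a_2,\dots,a_n\in L_0$ by hypothesis, the preceding lemma applies to $U=M$ and yields $N(M)=M$.

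The heart of the argument is then to contradict $N(M)=M$ by showing $N(M)=L$. I would compute each $N_s(M)$ separately. Fix $s$ and an arbitrary $a\in L$; the defining condition for $a\in N_s(M)$ is that $[x_1,\dots,x_{s-1},a,x_{s+1},\dots,x_n]\in M$ for all $x_l\in M$. The key observation is that, since $i\neq j$, at least one of the two ideal indices is different from $s$; say it is $i$. Then the slot in position $i$ carries some $x_i\in M$ while every other slot carries an element of $L$, so the $i$-ideal property of $M$ places the product in $M$. Hence $a\in N_s(M)$ for every $a$, that is $N_s(M)=L$, and this holds for each $s\in\{1,\dots,n\}$. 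Consequently $N(M)=\bigcap_{s=1}^n N_s(M)=L$, contradicting $N(M)=M\neq L$. Therefore $L_0=L$ and $R(a_2,\dots,a_n)$ is nilpotent.

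The main obstacle I anticipate is essentially bookkeeping: ensuring that for \emph{every} position $s$ one can genuinely invoke one of the two ideal properties. This is precisely where having two distinct indices $i\neq j$ is needed, since for the single bad case $s=i$ one must fall back on the $j$-ideal property, so a single ideal index would not suffice. A secondary point to verify carefully is the reduction step linking nilpotency of $R$ to the equality $L_0=L$ via the Fitting Lemma, together with the fact that a proper subalgebra embeds in a maximal one, which again rests on finite-dimensionality.
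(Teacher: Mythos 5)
Your proof is correct and follows essentially the same route as the paper: assume $L_0(R(a_2,\dots,a_n))\neq L$, embed it in a maximal subalgebra $M$, apply the preceding lemma to get $N(M)=M$, and then use the $i$- and $j$-ideal properties (with $i\neq j$, so for each position $s$ at least one ideal index avoids $s$) to force $N_s(M)=L$ for all $s$, a contradiction. Your additional verifications---that $L_0$ is a subalgebra via the root-space lemma and that finite-dimensionality guarantees a maximal subalgebra containing it---are details the paper leaves implicit.
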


\begin{proof} Assume that $L_0(R(a_2,\dots,a_n)) \neq L$.
Then there exists maximal algebra $M$ such that
$L_0(R(a_2,\dots,a_n)) \subseteq M$. Then by previous lemma we
have $N(M)=M$.

Since $M$ is an $i$- and a $j$-ideal $(i\neq j)$, we have
\[[\underbrace{M,\dots M}_{s-1},L,M,\dots, M] \subseteq M\] for
all $1\leq s \leq n$. Thus, $ L=N_s(M)$ for all $1\leq s \leq n$
and $L=N(M)$. Contradiction.

Therefore  $L=L_0(R(a_2,\dots,a_n))$ and $R(a_2,\dots,a_n)$ is a
nilpotent operator.
\end{proof}

In \cite[Theorem 2.2]{Will1} there were given several statements
equivalent to nilpotency of the finite dimensional Lie
$n$-algebras. For Leibniz $n$-algebras Proposition
\ref{onesidewilliams} verifies the statement in one direction. The
other direction of the statement in our
case is not true in general. However we establish the following result.

\begin{pr}Let $L$ be a finite dimensional Leibniz $n$-algebra with
condition $a_i\in L_0(R(a_2,\dots,a_n))$ for some $2\leq i \leq n$
for an arbitrary $(a_2,\dots, a_n)\in L^{\times (n-1)}$. If any
maximal subalgebra $M$ of $L$ is an ideal of $L$ then
$L$ is $1$-nilpotent.
\end{pr}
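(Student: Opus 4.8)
The plan is to reduce the whole statement to Engel's theorem. Since $L$ is finite dimensional, $L$ is $1$-nilpotent as soon as every right multiplication operator $R(a_2,\dots,a_n)$ is nilpotent, i.e. as soon as $L_0(R(a_2,\dots,a_n))=L$ for every $(a_2,\dots,a_n)\in L^{\times(n-1)}$. I would therefore argue by contradiction, following the pattern of the preceding proposition, but exploiting that here the maximal subalgebras are genuine ideals rather than merely $i$- and $j$-ideals.

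Assume some tuple $(a_2,\dots,a_n)$ satisfies $L_0:=L_0(R(a_2,\dots,a_n))\neq L$, and write $R=R(a_2,\dots,a_n)$. Since $R$ is a derivation, Lemma \ref{decomposition} applied with $d=R$ gives $[L_0,\dots,L_0]\subseteq L_0$ (the root $0+\cdots+0$), so $L_0$ is a subalgebra, and it is proper by our assumption $L_0\neq L$. Hence $L_0$ is contained in some maximal subalgebra $M$, which by hypothesis is an ideal. Applying the standing assumption to this very tuple, there is an index $i$ with $a_i\in L_0\subseteq M$.

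The crux is to prove $N_1(M)=M$, and here the single membership $a_i\in M$ suffices. The entry $a_i$ occupies the $i$-th slot of the bracket $[z,a_2,\dots,a_n]$; since $M$ is in particular an $i$-ideal, for every $z\in L$ we get $R(z)=[z,a_2,\dots,a_n]\in M$ \emph{regardless of where the remaining $a_j$ lie}. This is exactly what replaces the stronger hypothesis $a_2,\dots,a_n\in L_0$ used in the earlier lemma. Moreover $M$ is $R$-invariant, because $M$ is a $1$-ideal and so $R(M)=[M,a_2,\dots,a_n]\subseteq[M,L,\dots,L]\subseteq M$. Thus for any $z\in N_1(M)$ we have $R(z)\in M$, and Lemma \ref{desc_lem} (with $Q=R$ and the $R$-invariant subspace $M$) yields $z-z_0\in M$, where $z_0\in L_0\subseteq M$ is the null component of $z$. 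Hence $z\in M$, and since $M\subseteq N_1(M)$ always holds for a subalgebra, we conclude $N_1(M)=M$.

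On the other hand, $M$ being an ideal forces $N_s(M)=L$ for all $s$ (for $s\neq1$ the slot $1$ holds an element of $M$ and $M$ is a $1$-ideal; for $s=1$ slot $2$ holds an element of $M$ and $M$ is a $2$-ideal, using $n\geq2$), in particular $N_1(M)=L$. As $M$ is a proper subalgebra this contradicts $N_1(M)=M$. Therefore no such tuple exists, every $R(a_2,\dots,a_n)$ is nilpotent, and Engel's theorem gives that $L$ is $1$-nilpotent. I expect the only delicate point to be the index bookkeeping in the crux step, namely verifying that the guaranteed entry $a_i\in L_0$ falls precisely in the slot where the $i$-ideal property of $M$ applies, together with the preliminary check that $L_0$ is a proper subalgebra so that it embeds in a maximal subalgebra.
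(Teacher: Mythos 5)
Your proof is correct, and its skeleton is the same as the paper's: argue by contradiction, take a maximal subalgebra $M$ containing the proper Fitting null-component $L_0$, note that $M$ is an ideal and that the hypothesis places some $a_i$ in $L_0\subseteq M$, and deduce from the $i$-ideal property that $R(L)=[L,a_2,\dots,a_n]\subseteq M$. Where you diverge is the finishing move. The paper concludes directly from the Fitting decomposition: since $R$ restricts to an automorphism of $L_1(R)$, one has $L_1=R(L_1)\subseteq R(L)\subseteq M$, hence $L=L_0\oplus L_1\subseteq M$, contradicting that $M$ is proper. You instead invoke Lemma \ref{desc_lem} to get $z-z_0\in M$ and hence $z\in M$, packaged as the normalizer dichotomy $N_1(M)=M$ versus $N_1(M)=L$, mirroring the proposition that precedes this one in the paper. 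Two remarks on the trade-off. First, your detour through normalizers is redundant: since $R(z)\in M$ holds for \emph{every} $z\in L$, your Lemma \ref{desc_lem} step already yields $L\subseteq M$ outright, so you could skip $N_s$ entirely. Second, the paper's finish is marginally more robust: the Fitting Lemma works over any field, whereas Lemma \ref{desc_lem} and Lemma \ref{decomposition} rest on a characteristic-space (root-space) decomposition, which tacitly requires the eigenvalues of $R$ to lie in the ground field. On the plus side, you explicitly justify that $L_0$ is a subalgebra (via Lemma \ref{decomposition}) before embedding it in a maximal subalgebra --- a point the paper's proof uses silently.
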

\begin{proof} Assume that $L$ is not $1$-nilpotent. Then there
exists a non-nilpotent right multiplication operator
$R(a_2,\dots,a_n)$. Since $R(a_2,\dots,a_n)$ is non-nilpotent,
the Fitting null-component $L_0(R(a_2,\dots,a_n))\neq L$.

Let $M$ be a maximal subalgebra of $L$ containing
$L_0(R(a_2,\dots,a_n))$. Then $a_i\in
L_0(R(a_2,\dots,a_n))\subseteq M$ for some $2\leq i \leq n$ by
assumption of the proposition. Since $M$ is a maximal subalgebra, it
is also an ideal of $L$. Then
$R(a_2,\dots,a_n)(L)\subseteq M$.

Since $R(a_2,\dots,a_n)$ is an automorphism on
$L_1(R(a_2,\dots,a_n))$, we obtain that $L_1=R(L_1)=L_1\cap M$. Hence
$L_1 \subseteq M$.

Then $L=L_0\oplus L_1\subseteq M \neq L$. This is a contradiction.
Hence, all right multiplication operators are nilpotent.
Therefore, by Engel's theorem $L$ is $1$-nilpotent.
\end{proof}

\section{Invariance of some radicals under derivation}\label{invar}

In the following section we establish some classical results from the
theory of Lie algebras concerning solvability and nilpotency which
are also true in Leibniz algebras and Lie $n$-algebras.

\begin{pr}\label{equality} For an ideal $H$ of a Leibniz $n$-algebra $L$
the equality $(H^{(m)_k})^{(r)_k} =H^{(m+r-1)_k}$ holds for all
$m,r \in \mathbb{N}$.
\end{pr}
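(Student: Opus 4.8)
The plan is to fix $m\in\mathbb{N}$ and prove the identity by induction on $r$, the starting ideal $H^{(m)_k}$ playing the role that $H$ plays in the definition of the $k$-series. For the base case $r=1$, the defining recursion gives $(H^{(m)_k})^{(1)_k}=H^{(m)_k}$, which is exactly $H^{(m+1-1)_k}$, so the claim holds.

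For the inductive step, suppose $(H^{(m)_k})^{(r)_k}=H^{(m+r-1)_k}$. Writing $G:=H^{(m)_k}$ and unfolding one step of the defining recursion for the $k$-series of $G$, the term $G^{(r+1)_k}$ is the sum, over all non-negative $i_1,\dots,i_k$ with $0\le i_1+\dots+i_k\le n-k$, of the products obtained by inserting $k$ copies of $G^{(r)_k}$ into the $n$ bracket slots and filling the remaining $n-k$ slots with $L$. By the induction hypothesis each copy of $G^{(r)_k}$ equals $H^{(m+r-1)_k}$, so this sum is literally the expression defining $H^{((m+r-1)+1)_k}=H^{(m+r)_k}$. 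Hence $(H^{(m)_k})^{(r+1)_k}=H^{(m+r)_k}$, which closes the induction.

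The essential point — and the only place that requires any care — is that in the recursion defining the $k$-series the \emph{filler} positions are always occupied by the ambient algebra $L$, never by the seed subspace; consequently, starting the construction from $H^{(m)_k}$ rather than from $H$ does not change the shape of the products at each level but merely reindexes the construction, and the entire content of the statement is the bookkeeping $m\mapsto m+r-1$ together with the fact that the summation range $0\le i_1+\dots+i_k\le n-k$ is identical at every level, so the two families of generators match term by term. I therefore do not expect a genuine obstacle beyond keeping this index arithmetic straight. I would also remark that the recursion $X\mapsto X^{(\cdot)_k}$ is well defined for an arbitrary subspace $X\subseteq L$, so the displayed equality does not itself rely on $H^{(m)_k}$ being an ideal; if one wants the symbol $(H^{(m)_k})^{(r)_k}$ to carry the meaning of $k$-solvability of an ideal, one should check separately, using the Leibniz identity \eqref{FI}, that each $H^{(m)_k}$ is an $n$-sided ideal, but this is not needed for the equality above.
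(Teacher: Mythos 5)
Your proof is correct and takes exactly the approach the paper uses: the paper's own proof is a one-line appeal to induction on $r$, and you have simply carried out that induction in full, the key point being (as you note) that the recursion defining the $k$-series depends only on the previous term, with the filler slots always occupied by $L$, so starting the series at $H^{(m)_k}$ merely shifts the index. Your closing remark that the displayed equality does not require the intermediate terms to be ideals is also accurate and consistent with the paper, which treats the ideal property of the terms $H^{(m)_k}$ as a separate matter in the subsequent proposition.
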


\begin{proof} Using induction on $r$ one can easily prove the
assertion of the proposition.
\end{proof}

Even though we can not state that $H^{m_k}$ is an $s$-sided ideal
for all $1\leq s \leq n$, we establish the following result.

\begin{pr} For an ideal $H$ of a Leibniz $n$-algebra $L$,
$H^{(m)_k}$ is a $1$-ideal of $L$ for all $m,k \in \mathbb{N}$.
\end{pr}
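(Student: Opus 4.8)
The plan is to argue by induction on $m$, using the fundamental identity \eqref{FI} to distribute a single right multiplication across the $n$ factors of the product defining $H^{(m+1)_k}$. Recall that, by definition, $H^{(m)_k}$ being a $1$-ideal means precisely $[H^{(m)_k},L,\dots,L]\subseteq H^{(m)_k}$, i.e. that $H^{(m)_k}$ is stable under every right multiplication operator $R(y_2,\dots,y_n)$. So the goal of each inductive step is exactly this stability statement for $H^{(m+1)_k}$.

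For the base case $m=1$ we have $H^{(1)_k}=H$, which is an ideal by hypothesis and hence in particular a $1$-ideal. For the inductive step I would assume $[H^{(m)_k},L,\dots,L]\subseteq H^{(m)_k}$ and consider a spanning element of $H^{(m+1)_k}$, namely a product $w=[u_1,\dots,u_n]$ in which exactly $k$ of the arguments lie in $H^{(m)_k}$ and the remaining $n-k$ lie in $L$ (summed over all admissible placements of the $H^{(m)_k}$-slots). Fixing arbitrary $y_2,\dots,y_n\in L$ and applying \eqref{FI} to $[w,y_2,\dots,y_n]$ expands it as $\sum_{i=1}^n[u_1,\dots,u_{i-1},[u_i,y_2,\dots,y_n],u_{i+1},\dots,u_n]$, where the $i$-th summand replaces only the factor $u_i$ by $[u_i,y_2,\dots,y_n]$ and leaves the other $n-1$ factors untouched.

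The key observation is that this substitution preserves the type of each slot. If $u_i\in L$, then $[u_i,y_2,\dots,y_n]\in L$ automatically; if $u_i\in H^{(m)_k}$, then $[u_i,y_2,\dots,y_n]=R(y_2,\dots,y_n)(u_i)\in[H^{(m)_k},L,\dots,L]\subseteq H^{(m)_k}$ by the inductive hypothesis. In either case the resulting product still has exactly $k$ factors in $H^{(m)_k}$ and $n-k$ factors in $L$, in the very same positions as in $w$, so each summand is again a generator of $H^{(m+1)_k}$. Summing over $i$ and invoking linearity of right multiplication then yields $[H^{(m+1)_k},L,\dots,L]\subseteq H^{(m+1)_k}$, completing the induction.

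I do not expect a serious obstacle: the fundamental identity is tailor-made to push a right multiplication through a product, and the inductive hypothesis supplies exactly the one nontrivial closure fact needed (that an $H^{(m)_k}$-slot stays inside $H^{(m)_k}$ after right multiplication), while the $L$-slots require nothing to check. The only point I would handle carefully is the bookkeeping of slot positions, confirming that since the substitution acts in place it neither changes the number of $H^{(m)_k}$-factors nor moves them, so the rewritten term sits in an admissible summand of the defining expression for $H^{(m+1)_k}$; this is automatic here precisely because \eqref{FI} modifies a single argument at a time.
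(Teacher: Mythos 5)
Your proposal is correct and follows essentially the same route as the paper: induction on $m$, with the base case coming from $H$ being an ideal, and the inductive step using identity \eqref{FI} to distribute the right multiplication over the $n$ slots, where $L$-slots trivially stay in $L$ and $H^{(m)_k}$-slots stay in $H^{(m)_k}$ by the inductive hypothesis. In fact, you make explicit the slot-preservation bookkeeping that the paper's proof leaves implicit in its one-line application of \eqref{FI}.
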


\begin{proof} Let $k$ be an arbitrary fixed natural number.

For $m=1$ we have $[H^{(1)_k},L,\dots,L] \subseteq [H,L,\dots,L]
\subseteq H=H^{(1)_k}$ since $H$ is an ideal.

Let $H^{(m)_k}$ be a $1$-ideal, i.e. $[H^{(m)_k},L,\dots,L]\subseteq
H^{(m)_k}$.

Then
\begin{multline*}
[H^{(m+1)_k},L,\dots,L] \\
 =\left[\sum_{i_1+\dots+i_k=0}^{n-k} \
[\underbrace{L,\dots,L}_{i_1},H^{(m)_k},\dots,
\underbrace{L,\dots,L}_{i_k},H^{(m)_k},\underbrace{L,\dots,L}_{n-i_1-\dots-i_k}],L,\dots,L\right] \\
=\sum_{i_1+\dots+i_k=0}^{n-k}
\Big[[\underbrace{L,\dots,L}_{i_1},H^{(m)_k},\dots,
\underbrace{L,\dots,L}_{i_k},H^{(m)_k},\underbrace{L,\dots,L}_{n-i_1-\dots-i_k}],L,\dots,L\Big] \, .
\end{multline*}

Since $H^{(m)_k}$ is a $1$-ideal by induction hypothesis, using
identity \eqref{FI} we obtain that
\begin{multline*}
\Big[[\underbrace{L,\dots,L}_{i_1},H^{(m)_k},\dots,
\underbrace{L,\dots,L}_{i_k},H^{(m)_k},\underbrace{L,\dots,L}_{n-i_1-\dots-i_k}],L,\dots,L\Big]\\
 \subseteq [\underbrace{L,\dots,L}_{i_1},H^{(m)_k},\dots,
\underbrace{L,\dots,L}_{i_k},H^{(m)_k},\underbrace{L,\dots,L}_{n-i_1-\dots-i_k}] \, .
\end{multline*}
Therefore
\begin{multline*}
[H^{(m+1)_k},L,\dots,L] \\
\subseteq \sum_{i_1+\dots+i_k=0}^{n-k}
[\underbrace{L,\dots,L}_{i_1},H^{(m)_k},\dots,
\underbrace{L,\dots,L}_{i_k},H^{(m)_k},\underbrace{L,\dots,L}_{n-i_1-\dots-i_k}]=H^{(m+1)_k}
\end{multline*}
and $H^{(m+1)_k}$ is a $1$-ideal of $L$.
\end{proof}

\begin{pr}\label{k-sol} Let $I$ be a $k$-solvable ideal of a Leibniz $n$-algebra
$L$ such that $L/I$ is also $k$-solvable. Then $L$ is
$k$-solvable.
\end{pr}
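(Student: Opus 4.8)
The plan is to transport $k$-solvability from the quotient down to $L$ and then close the gap using the $k$-solvability of $I$, following the classical ``solvable-by-solvable is solvable'' scheme. First I would let $\pi\colon L\to L/I$ be the canonical projection and establish the compatibility
\[\pi\big(L^{(m)_k}\big)=(L/I)^{(m)_k}\qquad\text{for every }m.\]
This goes by induction on $m$: the base case $\pi(L)=L/I$ is clear, and since $\pi$ is an algebra homomorphism it sends an $n$-bracket containing $k$ factors from $L^{(m)_k}$ to the corresponding bracket in $L/I$ with $k$ factors from $\pi\big(L^{(m)_k}\big)=(L/I)^{(m)_k}$ (surjectivity of $\pi$ gives the reverse containment, by lifting factors). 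As the terms defining $L^{(m+1)_k}$ and $(L/I)^{(m+1)_k}$ are exactly the spans of such brackets, the equality propagates.

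Since $L/I$ is $k$-solvable, there is $q$ with $(L/I)^{(q)_k}=0$, so the compatibility yields $\pi\big(L^{(q)_k}\big)=0$, that is,
\[L^{(q)_k}\subseteq \Ker\pi = I.\]
Next I would record the monotonicity of the $k$-derived series: if $A\subseteq B$ are subspaces of $L$, then $A^{(r)_k}\subseteq B^{(r)_k}$ for all $r$. This is again immediate by induction on $r$, since $A^{(r+1)_k}$ is spanned by brackets with $k$ factors taken from $A^{(r)_k}\subseteq B^{(r)_k}$ and the remaining factors from the common ambient $L$. Applying this with $A=L^{(q)_k}$ and $B=I$ gives
\[\big(L^{(q)_k}\big)^{(r)_k}\subseteq I^{(r)_k}\qquad\text{for all }r.\]

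Finally I would combine these with Proposition \ref{equality}. Taking $H=L$ there gives $\big(L^{(q)_k}\big)^{(r)_k}=L^{(q+r-1)_k}$, and since $I$ is $k$-solvable there is $p$ with $I^{(p)_k}=0$; choosing $r=p$ then produces
\[L^{(q+p-1)_k}=\big(L^{(q)_k}\big)^{(p)_k}\subseteq I^{(p)_k}=0,\]
so $L^{(q+p-1)_k}=0$ and $L$ is $k$-solvable. The two inductions are routine, resting only on the homomorphism property of $\pi$ and on the description of the terms $H^{(m)_k}$ as spans of brackets with $k$ marked factors. The one point deserving care, which I expect to be the main obstacle, is the identity $\big(L^{(q)_k}\big)^{(r)_k}=L^{(q+r-1)_k}$: this is precisely where Proposition \ref{equality} enters, and it is legitimate only because it is invoked with the ideal $H=L$, so that the $k$-derived series of the subspace $L^{(q)_k}$ genuinely continues that of $L$ rather than being computed inside a smaller ambient algebra.
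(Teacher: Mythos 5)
Your proposal is correct and follows exactly the paper's own argument: project onto $L/I$ to get $L^{(q)_k}\subseteq I$, then use Proposition \ref{equality} with $H=L$ together with the $k$-solvability of $I$ to conclude $L^{(q+p-1)_k}=\big(L^{(q)_k}\big)^{(p)_k}\subseteq I^{(p)_k}=0$. The only difference is that you spell out the two routine inductions (compatibility of the series with the projection, and monotonicity in the first argument) that the paper treats as immediate.
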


\begin{proof}Let $\phi:L \to L/I$ be the natural homomorphism. Since $L/I$ is $k$-solvable, we have
$0=( L/I)^{(m)_k}=\big(\phi(L)\big)^{(m)_k}=\phi(L^{(m)_k})$ for some
$m\in \mathbb{N}$. Thus $L^{(m)_k} \subseteq I$. Since $I$ is
$k$-solvable, there exists $p\in \mathbb{N}$ such that
$I^{(p)_k}=0$. Therefore by Proposition \ref{equality} we have
$L^{(m+p-1)_k}=(L^{(m)_k})^{(p)_k}\subseteq I^{(p)_k}=0$ and so $L$
is $k$-solvable.
\end{proof}
By induction it is easy to prove that if $I$ is a $k$-solvable
ideal of a Leibniz $n$-algebra $L$, then $I$ is also
$(k+p)$-solvable for all $p\in \mathbb{N}$.

Using standard methods  and Proposition \ref{k-sol} we obtain that
the sum of $k$-solvable ideals is also $k$-solvable. Now
let $H$ be a maximal $k$-solvable  ideal in a finite
dimensional Leibniz $n$-algebra $L$ and let $K$ be an arbitrary
$k$-solvable  ideal of $L$. Then $H+K$ is also
$k$-solvable and $H+K \supseteq H$. Since $H$ is a maximal
$k$-solvable ideal, we obtain that $H+K=H$. Therefore we can
define the maximal $k$-solvable ideal as the  sum of all the
$k$-solvable ideals in $L$ and call it the \emph{$k$-solvable radical}.

 The following formula for a
derivation $d:L \to L$ of a Leibniz $n$-algebra $L$ over a field
$\mathbb{K}$ of characteristic zero, for any $k \in \mathbb{N}$, was given in \cite{onnilpotent}:
\begin{equation} \label{der}
d^k([x_1,\dots,x_n]) = \sum_{i_1+i_2+\cdots+i_n=k} \ \frac{k!}{i_1!i_2!\dots
i_n!} \ [d^{i_1}(x_1),d^{i_2}(x_2),\dots,d^{i_n}(x_n)] \, .
\end{equation}

\begin{pr} \label{inclusion} Let $I$ be an ideal of a
Leibniz $n$-algebra $L$ and $d \in \Der(L)$. Then \[\big(d(I)\big)^{(m)_k}
\subseteq I+d^{k^{m-1}}\big(I^{(m)_k}\big)\] for all $m\in \mathbb{N}$ and
$1\leq k \leq n$.
\end{pr}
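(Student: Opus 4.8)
The plan is to induct on $m$, the index of the $k$-solvable series. For $m=1$ the statement reads $d(I)\subseteq I+d^{1}(I^{(1)_k})=I+d(I)$, which is trivial, so the content is in the inductive step. Throughout I would use two elementary facts, both proved by an immediate induction from the ideal hypothesis: $I^{(m)_k}\subseteq I$ for every $m$, and consequently any bracket of $n$ elements, one of which lies \emph{undifferentiated} in $I$ and the rest in $L$, belongs to $I$.

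For the step, I assume $(d(I))^{(m)_k}\subseteq I+d^{t}(I^{(m)_k})$ with $t=k^{m-1}$ and expand the defining recursion
\[(d(I))^{(m+1)_k}=\sum [\underbrace{L,\dots,L}_{i_1},(d(I))^{(m)_k},\dots,(d(I))^{(m)_k},\underbrace{L,\dots,L}_{\ldots}],\]
substituting the inductive bound into each of the $k$ distinguished slots. Writing each distinguished factor as $u_j+d^{t}(v_j)$ with $u_j\in I$ and $v_j\in I^{(m)_k}$ and expanding multilinearly, every summand retaining at least one $u_j$ lands in $I$ by the ideal fact above. Hence, modulo $I$, everything reduces to the ``pure'' products $P=[\dots,d^{t}(v_1),\dots,d^{t}(v_k),\dots]$ with $v_j\in I^{(m)_k}$ and $L$ in the remaining $n-k$ slots.

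The heart of the argument is to realise such a $P$ as $d^{kt}$ of a bracket in $I^{(m+1)_k}$. I take $Q=[\dots,v_1,\dots,v_k,\dots]\in I^{(m+1)_k}$ (same placement) and apply \eqref{der} with exponent $kt=k^{m}$. The multi-index carrying $t$ derivatives on each distinguished slot and $0$ on every $L$-slot reproduces $P$ with the nonzero coefficient $(kt)!/(t!)^{k}$, so over a field of characteristic zero I may solve for $P$. Every summand of $d^{kt}(Q)$ in which some distinguished slot carries $d^{0}=\mathrm{id}$ keeps an undifferentiated factor $v_j\in I$ and so lies in $I$. If these were the only corrections, I would get $P\equiv \tfrac{(t!)^k}{(kt)!}\,d^{kt}(Q)\pmod I\in I+d^{kt}(I^{(m+1)_k})$, closing the induction.

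The main obstacle is exactly the remaining cross terms: the summands of $d^{kt}(Q)$ in which the number $b_j$ of derivatives on each distinguished slot is at least $1$, yet $(b_1,\dots,b_k)$ is not $(t,\dots,t)$ with all $L$-slots undifferentiated. At the first step $t=k^{0}=1$ such terms cannot occur, since $\sum_j b_j=kt=k$ together with all $b_j\ge 1$ forces $b_j=1$ and every $L$-slot $0$; this is why the passage $m=1\to2$ is clean. For $t>1$ they genuinely appear, and I would dispose of them by a secondary induction exploiting that each $v_j\in I^{(m)_k}$ is itself a bracket of elements of $I^{(m-1)_k}$ and $L$: re-expanding $d^{b_j}(v_j)$ by \eqref{der}, flattening the resulting nested brackets with the fundamental identity \eqref{FI}, and inducting downward on the number of derivatives sitting on $L$-slots together with a measure of the non-uniformity of $(b_1,\dots,b_k)$, so that each cross term is reduced either into $I$ or to a configuration strictly closer to the leading one already handled. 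Controlling this bookkeeping is the only delicate point; the rest is the formal induction on $m$ with the multilinear ideal-absorption above.
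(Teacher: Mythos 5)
Your proposal follows the same route as the paper's own proof: induction on $m$, multilinear expansion of the defining sum for $\big(d(I)\big)^{(m+1)_k}$, absorption into $I$ of every summand that keeps an undifferentiated factor from $I$, and formula \eqref{der} applied to a bracket $Q=[\dots,v_1,\dots,v_k,\dots]\in I^{(m+1)_k}$ to capture the remaining pure term $[\dots,d^{t}(v_1),\dots,d^{t}(v_k),\dots]$, $t=k^{m-1}$, inside $I+d^{kt}\big(I^{(m+1)_k}\big)$. The paper's proof consists of exactly this chain of inclusions, asserted in a single display with the words ``using formula \eqref{der}'' and nothing more. So the part of your argument that you carry out in full coincides with the published one, including the observation that the passage $m=1\to2$ is clean because $\sum_j b_j=k$ with all $b_j\ge 1$ forces the leading profile (and, for the same reason, the case $k=1$ is clean for all $m$).

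The cross terms you flag are therefore not a defect of your write-up relative to the paper: they are a genuine gap, and it is a gap in the paper's proof as well, which never mentions them. For $k\ge 2$ and $t>1$ the expansion of $d^{kt}(Q)$ contains profiles $(b_1,\dots,b_k)$ with all $b_j\ge 1$ and $(b_1,\dots,b_k)\neq(t,\dots,t)$; these summands are neither visibly in $I$ nor visibly in $d^{kt}\big(I^{(m+1)_k}\big)$, and the paper's displayed inclusion is unjustified exactly there. However, your proposed repair does not close the gap either. Take the smallest problematic case $n=k=2$, $m+1=3$, where the series is the ordinary derived series: the cross term is $[d^3(u_1),d(u_2)]$ with $u_1,u_2\in[I,I]$. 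Writing $u_1=[a,b]$ with $a,b\in I$, expanding $d^3(u_1)$ by \eqref{der} and flattening with \eqref{FI} gives, besides terms that do land in $I$, summands such as
\[
[[d^2(a),d(u_2)],d(b)]\qquad\text{and}\qquad [d^2(a),[d(b),d(u_2)]],
\]
which contain no undifferentiated factor from $I$ and whose underlying brackets $[[a,u_2],b]$ and $[a,[b,u_2]]$ lie in $[I,I]$ but in general not in $I^{(3)_2}=[[I,I],[I,I]]$. So the flattening does not move you ``strictly closer to the leading configuration''; it produces configurations that have escaped the solvable-series structure the downward induction would need to preserve, and no decreasing measure is in sight. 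Closing this --- or deciding whether the stated inclusion is even true for $k\ge 2$, $m\ge 3$ --- requires an idea absent from both your proposal and the paper. Note that Theorem \ref{sol_rad} only needs a correction term that vanishes whenever $I^{(m)_k}=0$, so establishing a weaker, profile-insensitive inclusion (rather than the sharp exponent $k^{m-1}$) may be the more promising route.
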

\begin{proof} For $m=1$ we have $d(I)\subseteq
I+d(I)$ which obviously holds.

Assume that $\big(d(I)\big)^{(m)_k} \subseteq I+d^{k^{m-1}}\big(I^{(m)_k}\big)$.

Using formula \eqref{der} we verify the inclusion for $m+1:$

\begin{multline*}\big(d(I)\big)^{(m+1)_k}\\
= \sum_{i_1+\dots+i_k=0}^{n-k}
[\underbrace{L,\dots,L}_{i_1},d(I)^{(m)_k},\underbrace{L,\dots,L}_{i_2},d(I)^{(m)_k},\dots,
\underbrace{L,\dots,L}_{i_k},d(I)^{(m)_k},\underbrace{L,\dots,L}_{n-i_1-\dots-i_k}]\\
\subseteq \sum_{i_1+\dots+i_k=0}^{n-k}
[\underbrace{L,\dots,L}_{i_1},I+d^{k^{m-1}}(I^{(m)_k}),\dots,
\underbrace{L,\dots,L}_{i_k},I+d^{k^{m-1}}(I^{(m)_k}),\underbrace{L,\dots,L}_{n-i_1-\dots-i_k}]\\
\subseteq I+d^{k^m}\left(\sum_{i_1+\dots+i_k=0}^{n-k}
[\underbrace{L,\dots,L}_{i_1},I^{(m)_k},\dots,
\underbrace{L,\dots,L}_{i_k},I^{(m)_k},\underbrace{L,\dots,L}_{n-i_1-\dots-i_k}]\right)
=I+d^{k^m}\big(I^{(m+1)_k}\big) \, .
\end{multline*}

Therefore, the assertion of the proposition is true.
\end{proof}

Also, in \cite{onnilpotent}, it was shown that for any ideal I
of $L$ and $d\in \Der(L)$ the $I+d(I)$ is also an ideal
of $L$.

\begin{thm}\label{sol_rad} Let $J$ be the $k$-solvable radical of a finite
dimensional Leibniz $n$-algebra $L$ over a field $\mathbb{K}$
 of characteristic zero. Then $d(J)\subseteq J$ for any $d\in
\Der(L)$.
\end{thm}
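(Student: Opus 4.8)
The plan is to establish invariance by showing that $J+d(J)$ is again a $k$-solvable ideal of $L$ and then invoking the maximality of the $k$-solvable radical. Since, by construction, $J$ is the sum of all $k$-solvable ideals of $L$, any $k$-solvable ideal containing $J$ must coincide with $J$; hence once $J+d(J)$ is shown to be $k$-solvable we obtain $J+d(J)=J$, and therefore $d(J)\subseteq J$. So the whole proof reduces to producing a natural number $N$ with $\big(J+d(J)\big)^{(N)_k}=0$.

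First I would record that $K:=J+d(J)$ is an ideal of $L$, which is precisely the fact recalled from \cite{onnilpotent} immediately before the theorem, so no skew-symmetry is needed at this point. Let $p$ denote the index of $k$-solvability of $J$, that is, $J^{(p)_k}=0$. Applying Proposition \ref{inclusion} with $I=J$ and $m=p$ gives $\big(d(J)\big)^{(p)_k}\subseteq J+d^{k^{p-1}}\big(J^{(p)_k}\big)=J$, since the derivation term collapses once $J^{(p)_k}=0$. This is the single step where the characteristic-zero hypothesis enters, through formula \eqref{der} underlying Proposition \ref{inclusion}; finite-dimensionality is used only to guarantee that the radical $J$ exists as a maximal $k$-solvable ideal.

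Next I would pass to the quotient map $\pi\colon L\to L/J$. A short induction on $m$ shows that $\pi$ intertwines the operation $S\mapsto S^{(m)_k}$ on any subspace $S$, i.e. $\pi\big(S^{(m)_k}\big)=\big(\pi(S)\big)^{(m)_k}$, because $\pi$ is an algebra homomorphism and the operation is built entirely from brackets. As $\pi(K)=\pi\big(d(J)\big)$, this yields $\pi\big(K^{(p)_k}\big)=\pi\big((d(J))^{(p)_k}\big)\subseteq\pi(J)=0$, whence $K^{(p)_k}\subseteq J$. Finally, using monotonicity of $S\mapsto S^{(p)_k}$ (also an easy induction from the bracket definition) together with Proposition \ref{equality} applied to the ideal $K$, I would write $K^{(2p-1)_k}=\big(K^{(p)_k}\big)^{(p)_k}\subseteq J^{(p)_k}=0$, proving that $K=J+d(J)$ is $k$-solvable and closing the argument. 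Alternatively, the same conclusion follows from Proposition \ref{k-sol} applied to the ideal $J$ of $K$, once one observes $K^{(p)_k}\subseteq J$ means $K/J$ is $k$-solvable.

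The two auxiliary stability properties of $S\mapsto S^{(m)_k}$ — compatibility with quotient homomorphisms and monotonicity under inclusion — are routine but essential, and both are immediate inductions. The main conceptual obstacle is the first nontrivial containment $K^{(p)_k}\subseteq J$: one cannot simply split $\big(J+d(J)\big)^{(m)_k}$ into separate contributions of $J$ and of $d(J)$, because the multilinear brackets produce mixed cross terms involving both summands simultaneously. The device that overcomes this is to eliminate $J$ by working modulo $J$ (where $K$ reduces to the image of $d(J)$) while controlling $d(J)$ itself through the derivation estimate of Proposition \ref{inclusion}; this cleanly separates the two difficulties and lets the characteristic-zero input act exactly once.
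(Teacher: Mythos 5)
Your proof is correct and follows essentially the same route as the paper's: Proposition \ref{inclusion} gives $\big(d(J)\big)^{(p)_k}\subseteq J$, one deduces $\big(J+d(J)\big)^{(p)_k}\subseteq J$, Proposition \ref{equality} upgrades this to $\big(J+d(J)\big)^{(2p-1)_k}=0$, and maximality of the radical yields $d(J)\subseteq J$; the only difference is presentational, in that you obtain the middle containment by passing to the quotient $L/J$ while the paper absorbs the mixed bracket terms into the ideal $J$ directly, writing $\big(J+d(J)\big)^{(s)_k}\subseteq J+\big(d(J)\big)^{(s)_k}$, which is the same computation in different clothing. One caution: the parenthetical alternative via Proposition \ref{k-sol} should be dropped, since that proposition concludes $k$-solvability of $K=J+d(J)$ as an algebra (series built from brackets inside $K$), whereas the radical is defined as the sum of ideals whose $k$-solvability series uses entries from all of $L$ --- a priori a stronger condition when $k<n$ --- but your main argument never relies on this aside, so nothing is lost.
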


\begin{proof}
Let $s\in \mathbb{N}$ be such $J^{(s)_k}=0$. Then by Proposition
\ref{inclusion} we have $\big(d(J)\big)^{(s)_k} \subseteq
J+d^{k^{s-1}}\big(J^{(s)_k}\big)=J$.

Using formula \eqref{der}, we obtain that $\big(J+d(J)\big)^{(s)_k}\subseteq
J+\big(d(J)\big)^{(s)_k} \subseteq J$. Now by Proposition \ref{equality} we
have $\big(J+d(J)\big)^{(2s-1)_k}=\Big(\big(J+d(J)\big)^{(s)_k}\Big)^{(s)_k}\subseteq
J^{(s)_k}=0$. But this means that $J+d(J)$ is a $k$-solvable ideal.
Since $J$ is a $k$-solvable radical, we obtain that $J+d(J)\subseteq J$
and therefore $d(J) \subseteq J$.
\end{proof}

Similarly as in \cite{Bai2} we introduce the following series
for a $1$-sided ideal $I$ of a Leibniz $n$-algebra $L:$

\[I^{[1]}=I, \quad  I^{[k+1]}=[I^{[k]},I,L,\dots,L] \quad (k\geq 1).\]

By a simple induction using identity \eqref{FI} it can be proved that for
any $1$-sided ideal $I$ and for all $n\in\mathbb{N}$, $I^{[n]}$ is a
$1$-sided ideal.

\begin{defn} A $1$-sided ideal $I$ is called $K_1$-nilpotent, if
there exists $k\in\mathbb{N}$ such that $I^{[k]}=0$.
\end{defn}

The introduced type of nilpotency is also known as nilpotency in
the sense of Kuzmin for Lie $n$-algebras. Identity \eqref{FI} is
organized in such way, that the elements of the above introduced
series are $1$-ideals. However, if we change the position of
$I^{[k]}$ in the product defining $I^{[k+1]}$ from the first to
any other, we are not able to state that the elements of the obtained
series will be $s$-ideals of $L$ for any $2\leq s \leq n$.

\begin{pr} Let $I$ and $J$ be $K_1$-nilpotent $1$-sided ideals. Then
$I+J$ is also a $K_1$-nilpotent $1$-sided ideal.
\end{pr}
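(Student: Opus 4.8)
The plan is to split an easy structural part from a counting argument organised by the Filippov identity \eqref{FI}, isolating the mixed brackets as the one delicate point. First I would record that $I+J$ is again a $1$-sided ideal: since $I$ and $J$ are $1$-sided ideals, $[I+J,L,\dots,L]\subseteq[I,L,\dots,L]+[J,L,\dots,L]\subseteq I+J$, so the series $(I+J)^{[k]}$ is defined and each of its members is a $1$-sided ideal, as recorded just before the statement. Fix $p,q\in\mathbb{N}$ with $I^{[p]}=0$ and $J^{[q]}=0$; the goal is then $(I+J)^{[k]}=0$ for all sufficiently large $k$.

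Next I would unravel the recursion. Expanding $(I+J)^{[k]}$ by linearity in both the first and the second slot of each bracket, it is the sum, over all words $w=X_0X_1\cdots X_{k-1}$ with each $X_t\in\{I,J\}$, of the iterated brackets $T_w=[\dots[[X_0,X_1,L,\dots,L],X_2,L,\dots,L],\dots,X_{k-1},L,\dots,L]$. The key \emph{monochromatic} estimate is that a word with $X_0=I$ lies in $I^{[a]}$, where $a$ is the total number of letters $I$ in $w$: starting from $X_0=I\subseteq I^{[1]}$, a right bracket by $I$ advances $I^{[a']}$ to $I^{[a'+1]}$, while a right bracket by $J$ keeps us inside $I^{[a']}$, because $[I^{[a']},J,L,\dots,L]\subseteq[I^{[a']},L,\dots,L]\subseteq I^{[a']}$ by the $1$-sided ideal property of $I^{[a']}$. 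Symmetrically, a word with $X_0=J$ lies in $J^{[b]}$ with $b$ the number of letters $J$. Hence $T_w=0$ whenever $w$ begins with $I$ and has at least $p$ letters $I$, or begins with $J$ and has at least $q$ letters $J$.

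Finally I would attack the mixed words, which are the obstacle. By pigeonhole, once $k\ge p+q-1$ every word has at least $p$ letters $I$ or at least $q$ letters $J$; what remains is precisely a word beginning with the ``wrong'' letter — say $X_0=I$ while $J$ is the abundant letter — for which the monochromatic estimate only places $T_w$ in a low power of the $I$-series. To eliminate these I would fuse the scattered $J$'s using \eqref{FI}: for an inner piece with first slot $c\in I$ and two consecutive right entries $b_1,b_2\in J$, the identity writes $[[c,b_1,L,\dots,L],b_2,L,\dots,L]$ as the sum of $[[c,b_2,L,\dots,L],b_1,L,\dots,L]$, of a fused term $[c,[b_1,b_2,L,\dots,L],L,\dots,L]$ in which $[b_1,b_2,L,\dots,L]\subseteq J^{[2]}$, and of terms in which the outer right multiplication has been pushed into an $L$-slot. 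Iterating this, and inducting on the number of $J$-entries not yet fused, should collapse the $\ge q$ scattered $J$'s into a single entry lying in $J^{[q]}=0$ (and symmetrically for $I$), giving $T_w=0$ for every word once $k$ is large, hence $(I+J)^{[k]}=0$.

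I expect the genuine work to be exactly this last regrouping: organising the induction so that the ``wrong order'' terms and the terms absorbed into $L$-slots are all strictly further along, so that the process terminates with every mixed word driven into $I^{[p]}=0$ or $J^{[q]}=0$. It is the same flavour of manipulation with \eqref{FI} used to prove that each $I^{[k]}$ is a $1$-sided ideal, but finding a measure on the scattered minority entries that strictly decreases under fusion is where the care lies, and it may well force a bound on $k$ larger than the naive $p+q-1$.
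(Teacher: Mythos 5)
Your first two steps are correct: $I+J$ is a $1$-sided ideal, and the expansion of $(I+J)^{[k]}$ into words $T_w$, $w\in\{I,J\}^{k}$, together with the monochromatic estimate $T_w\subseteq I^{[a]}$ (for words starting with $I$ and containing $a$ letters $I$) is sound, since each $I^{[a]}$ is a $1$-sided ideal. But the proof stops exactly where you say it does, and that gap cannot be closed, because for merely $1$-sided ideals the mixed words genuinely survive: the statement, read literally with $1$-sided hypotheses, admits a counterexample. Let $L=\langle e_1,e_2\rangle$ be the Leibniz $n$-algebra whose only nonzero basis product is $[e_2,e_1,\dots,e_1]=e_2$ (the $m=2$ case of the example from \cite{cartan} recalled in Section \ref{rmo}). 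Then $I=\langle e_2\rangle$ and $J=\langle e_1\rangle$ are $1$-sided ideals with $I^{[2]}=J^{[2]}=0$, yet $I+J=L$ and $L^{[k]}=\langle e_2\rangle$ for all $k\geq 2$. The nonvanishing element is precisely your problematic word $I\,J\,J\cdots J$: the term $[\dots[[e_2,e_1,\dots,e_1],e_1,\dots,e_1],\dots]$ equals $e_2$ however many letters $J$ occur. One can also see why fusion is powerless here: applying \eqref{FI} to such a word, the transposed term coincides with the original, while the fused term $[e_2,[e_1,e_1,\dots,e_1],\dots]$ and all absorbed terms vanish, so the identity reduces to $e_2=e_2$. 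In general the absorbed terms are fatal to your plan: a $1$-sided ideal has no absorption in the second slot, so $[l,b_2,L,\dots,L]$ lies only in $L$, that letter $J$ is simply lost, and the descendants of a word with exactly $q$ letters $J$ retain only $q-1$ of them --- there is no measure decreasing toward $J^{[q]}=0$.

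The paper's proof takes a different route, and it is worth seeing both what it does and what it quietly assumes. It proves by induction the invariant
\[(I+J)^{[k]}\subseteq I^{[k]}+\big(I^{[k-1]}\cap J^{[1]}\big)+\dots+\big(I^{[1]}\cap J^{[k-1]}\big)+J^{[k]},\]
using $[I^{[a]}\cap J^{[b]},I,L,\dots,L]\subseteq I^{[a+1]}\cap J^{[b]}$ (the first entry lies in $I^{[a]}$, which advances the $I$-index by definition of the series, and in the $1$-sided ideal $J^{[b]}$, which preserves the $J$-index) together with its mirror image for bracketing with $J$; taking $k=n_1+n_2$ then kills every summand. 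Tracking intersections is exactly the device your word count lacks: every term carries an $I$-degree and a $J$-degree simultaneously, so no word ever has a ``wrong'' first letter. However, the induction step also needs $[I^{[k]},J,L,\dots,L]\subseteq J^{[1]}$ and $[J^{[k]},I,L,\dots,L]\subseteq I^{[1]}$, and these inclusions use absorption in the second slot, i.e. that $J$ and $I$ are ideals rather than merely $1$-sided ones --- which is exactly what fails in the counterexample, where $[e_2,e_1,\dots,e_1]=e_2\notin J$. So the proposition and its proof are really about the setting of the Corollary that follows it (both $I$ and $J$ ideals); under that hypothesis your mixed words are disposed of instantly (any bracket with a letter $J$ in the second slot lies in $J$), no application of \eqref{FI} is needed, and your expansion, upgraded to record pairs of indices rather than a single count, becomes the paper's argument.
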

\begin{proof} First, observe that
\[[I^{[p]}\cap J^{[q]},I,L\dots,L]\subseteq [I^{[p]},I,L,\dots,L]=I^{[p+1]} \, ,\]
and since $J^{[q]}$ is a $1$-ideal, we get
\[ [I^{[p]}\cap J^{[q]},I,L\dots,L]\subseteq
[J^{[q]},I,L\dots,L]\subseteq J^{[q]}.\]
Therefore,
\[[I^{[p]}\cap J^{[q]},I,L\dots,L]\subseteq I^{[p+1]}\cap
J^{[q]}.\] Analogously,
\[[I^{[p]}\cap J^{[q]},J,L\dots,L]\subseteq I^{[p]}\cap
J^{[q+1]}.\]

We have $(I+J)^{[1]}=I+J=I^{[1]}+J^{[1]}$.

Now assume that
\[(I+J)^{[k]}\subseteq I^{[k]}+ \big(I^{[k-1]}\cap J^{[1]}\big) +\dots +
 \big( I^{[1]}\cap J^{[k-1]} \big)+J^{[k]}.\]

Then
\begin{multline*}
(I+J)^{[k+1]}=[(I+J)^{[k]},I+J,L,\dots,L]\\
 \subseteq [(I+J)^{[k]},I,L,\dots,L]+[(I+J)^{[k]},J,L,\dots,L]\\
\subseteq [I^{[k]},I,L,\dots,L]+\sum_{r=1}^{k-1} \  [I^{[k-r]}\cap
J^{[r]},I,L,\dots,L]+[J^{[k]},I,L,\dots,L]\\
+ [I^{[k]},J,L,\dots,L]+\sum_{r=1}^{k-1} \  [I^{[k-r]}\cap
J^{[r]},J,L,\dots,L]+[J^{[k]},J,L,\dots,L]\\
 \subseteq I^{[k+1]}+\Big(\sum_{r=1}^{k-1} \ I^{[k-r+1]}\cap
J^{[r]}\Big)+\big(I^{[1]}\cap J^{[k]}\big) \\
+\big(I^{[k]}\cap J^{[1]}\big)+\Big(\sum_{r=1}^{k-1} \
I^{[k-r]}\cap J^{[r+1]}\Big)+J^{[k+1]}\\
\subseteq I^{[k+1]}+\big( I^{[k]}\cap J^{[1]}\big)+\dots +\big(I^{[1]}\cap J^{[k]}\big)+J^{[k+1]} \,.
\end{multline*}

Hence, for any $n\in\mathbb{N}$ we have \[(I+J)^{[n]}\subseteq
I^{[n]}+\big(I^{[n-1]}\cap J^{[1]}\big)+\dots +\big(I^{[1]}\cap J^{[n-1]}\big)+
J^{[n]}.\]

So if $I^{[n_1]}=0$ and $J^{[n_2]}=0$, then for $n=n_1+n_2$ every
summand in the above sum is zero. Therefore $(I+J)$ is also
$K_1$-nilpotent.
\end{proof}

\begin{cor} Let $I$ and $J$ be $K_1$-nilpotent ideals. Then
$I+J$ is also a $K_1$-nilpotent ideal.
\end{cor}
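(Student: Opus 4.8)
The plan is to deduce this corollary directly from the preceding proposition, which already settles the $1$-sided case. The key observation is that every ideal is, in particular, a $1$-sided ideal: setting $s=1$ in the definition of an $s$-sided ideal shows that an ideal $I$ satisfies $[I,L,\dots,L]\subseteq I$. Thus $I$ and $J$, being $K_1$-nilpotent ideals, are in particular $K_1$-nilpotent $1$-sided ideals, and the previous proposition applies verbatim to them.

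Concretely, I would argue in two steps. First, I apply the preceding proposition to conclude that $I+J$ is a $K_1$-nilpotent $1$-sided ideal; in particular there exists some $N\in\mathbb{N}$ with $(I+J)^{[N]}=0$. Since the defining condition of $K_1$-nilpotency is exactly the vanishing of the series $(I+J)^{[k]}$, and this condition does not depend on whether we regard $I+J$ as a $1$-sided ideal or as a two-sided ideal, this already yields the $K_1$-nilpotency of $I+J$.

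Second, I would verify that $I+J$ is genuinely an ideal, not merely a $1$-sided ideal. This is immediate from the multilinearity of the bracket: for each $1\leq s\leq n$,
\[[\underbrace{L,\dots,L}_{s-1},I+J,\underbrace{L,\dots,L}_{n-s}]\subseteq[\underbrace{L,\dots,L}_{s-1},I,\underbrace{L,\dots,L}_{n-s}]+[\underbrace{L,\dots,L}_{s-1},J,\underbrace{L,\dots,L}_{n-s}]\subseteq I+J,\]
because $I$ and $J$ are ideals. Hence $I+J$ is an $s$-ideal for every $s$, so it is an ideal, and combining this with the first step gives that $I+J$ is a $K_1$-nilpotent ideal.

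There is essentially no obstacle here; the only point requiring a moment's care is that the $K_1$-nilpotency supplied by the previous proposition is phrased for $1$-sided ideals, so one must confirm that this vanishing condition is unchanged when $I+J$ is promoted to a full ideal. Once that is observed, the corollary follows at once by combining the two steps above.
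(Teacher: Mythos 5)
Your proposal is correct and matches the paper's intent: the paper states this corollary without proof precisely because it follows immediately from the preceding proposition, exactly as you argue. Your two steps --- applying the proposition (since ideals are in particular $1$-sided ideals, and the series $(I+J)^{[k]}$ is the same either way) and checking by multilinearity that $I+J$ is again an ideal --- are the intended argument.
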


Let $I$ be a maximal $K_1$-nilpotent ideal in a finite
dimensional Leibniz $n$-algebra $L$ and let $J$ be an arbitrary
$K_1$-nilpotent  ideal of $L$. Then $I+J$ is also
$K_1$-nilpotent and $I+J \supseteq I$. Since $I$ is a maximal
$K_1$-nilpotent  ideal, we obtain that $I+J=I$. Therefore we
can define the maximal $K_1$-nilpotent ideal as the sum of
all the $K_1$-nilpotent ideals in $L$ and call it the \emph{$K_1$-nilradical}. Notice that, the $K_1$-nilradical do not possess the
properties of the radical in the sense of Kurosh.
\\

Using the same argumentation as in the proof of Proposition
\ref{inclusion} and Theorem \ref{sol_rad} the following statements
can be established.
\begin{pr} Let $I$ be an ideal of a Leibniz
$n$-algebra $L$. Then for any $d\in \Der(L)$ we have
$\big(d(I)\big)^{[n]}\subseteq I+d^n(I^{[n]})$ for all $n\in\mathbb{N}$.
\end{pr}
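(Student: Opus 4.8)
The plan is to argue by induction on the series index (the integer written $n$ in the statement, which I shall call $m$ to keep it separate from the arity), exactly paralleling the proof of Proposition~\ref{inclusion}. For $m=1$ the assertion reads $d(I)=\big(d(I)\big)^{[1]}\subseteq I+d(I)=I+d^1\big(I^{[1]}\big)$, which is immediate. Assuming $\big(d(I)\big)^{[m]}\subseteq I+d^m\big(I^{[m]}\big)$, I would substitute the hypothesis into the first slot and expand
\[
\big(d(I)\big)^{[m+1]}=[\big(d(I)\big)^{[m]},d(I),L,\dots,L]\subseteq[I,d(I),L,\dots,L]+[d^m(I^{[m]}),d(I),L,\dots,L].
\]
The first summand lies in $I$ because $I$ is a $1$-ideal (and $d(I)\subseteq L$), so everything reduces to showing that the single ``bad'' bracket $[d^m(I^{[m]}),d(I),L,\dots,L]$ is contained in $I+d^{m+1}\big(I^{[m+1]}\big)$.

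For that step the engine is the derivation formula \eqref{der}. Given $a\in I^{[m]}$, $b\in I$ and $c_3,\dots,c_n\in L$, I would set $x=[a,b,c_3,\dots,c_n]\in I^{[m+1]}$ and expand $d^{m+1}(x)$ via \eqref{der} into a multinomial sum of brackets $[d^{i_1}(a),d^{i_2}(b),d^{i_3}(c_3),\dots,d^{i_n}(c_n)]$ with $i_1+\dots+i_n=m+1$. Every term with $i_2=0$ keeps $b\in I$ undifferentiated in the second slot, hence lies in $I$; every term with $i_1\le m-1$ has its first slot in $d^{i_1}(I^{[m]})$, which I claim is contained in $I$. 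Granting this, the only summand that can survive modulo $I$ is the one with $i_1=m$, $i_2=1$ (which forces $i_3=\dots=i_n=0$), namely $(m+1)\,[d^m(a),d(b),c_3,\dots,c_n]$. Thus $d^{m+1}(x)=(m+1)[d^m(a),d(b),c_3,\dots,c_n]+\iota$ with $\iota\in I$, and dividing by $m+1$ --- here the characteristic-zero hypothesis is used --- places $[d^m(a),d(b),c_3,\dots,c_n]$ into $I+d^{m+1}\big(I^{[m+1]}\big)$, completing the induction.

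The heart of the matter, and the point I would isolate as an auxiliary lemma, is the claim that $d^{p}\big(I^{[q]}\big)\subseteq I$ whenever $p<q$; this is exactly what makes the multinomial collapse to a single surviving summand, and it is the (implicit) mechanism already behind Proposition~\ref{inclusion}. I would prove it by induction on $q$: since $I^{[q]}=[I^{[q-1]},I,L,\dots,L]$ is assembled from $q$ undifferentiated ``atomic'' copies of $I$, distributing fewer than $q$ derivatives via \eqref{der} must, by counting, leave at least one such copy untouched in some slot, and because $I$ is an ideal any bracket carrying an undifferentiated $I$-entry --- even after propagating outward through the nested brackets --- falls back into $I$. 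The only care needed is this bookkeeping of atomic $I$-factors through the nesting, together with the harmless absorption $d^{i}(L)\subseteq L$ in the $L$-slots; once the lemma is in place the remaining computation is the routine collection of derivatives already carried out for the $k$-solvable series.
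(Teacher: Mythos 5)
Your proof is correct and takes essentially the same route as the paper, whose own proof of this proposition is just the instruction to repeat the argumentation of Proposition \ref{inclusion}: induction on the series index, substitution of the inductive hypothesis into the defining bracket, absorption into $I$ of every term carrying an undifferentiated $I$-entry, and formula \eqref{der} to identify the single surviving term with $d^{m+1}$ of a bracket lying in $I^{[m+1]}$, modulo $I$. Your auxiliary lemma $d^{p}\big(I^{[q]}\big)\subseteq I$ for $p<q$, and the division by $m+1$ (where characteristic zero enters), are exactly the details that the paper leaves implicit in that reference.
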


\begin{thm} Let $J$ be the $K_1$-nilradical of a Leibniz
$n$-algebra $L$. Then for any $d\in \Der(L)$ we have $d(J)\subseteq
J$.
\end{thm}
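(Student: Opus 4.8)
The plan is to mirror the proof of Theorem \ref{sol_rad}: reduce the claim to showing that $J+d(J)$ is a $K_1$-nilpotent ideal, and then invoke the maximality of the $K_1$-nilradical. Recall first that $J+d(J)$ is an ideal of $L$ (\cite{onnilpotent}), so it is a legitimate candidate $K_1$-nilpotent ideal. Fix $n_0\in\mathbb{N}$ with $J^{[n_0]}=0$. Since each $J^{[m]}$ is a $1$-sided ideal, we have $J^{[m+1]}=[J^{[m]},J,L,\dots,L]\subseteq[J^{[m]},L,\dots,L]\subseteq J^{[m]}$, so the series is decreasing and $J^{[m]}=0$ for all $m\ge n_0$.

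First I would dispatch the two routine inclusions. Applying the preceding Proposition with $I=J$ and $n=n_0$ gives $\big(d(J)\big)^{[n_0]}\subseteq J+d^{n_0}\big(J^{[n_0]}\big)=J$. Next, since $J$ is an ideal, every bracket that has a factor from $J$ already lies in $J$; writing $K:=J+d(J)$ and expanding $K$ in the first and in the fixed second slot, one proves by induction on $m$ that $K^{[m]}\subseteq J+\big(d(J)\big)^{[m]}$, so in particular $K^{[n_0]}\subseteq J$. I would also record the monotonicity of the series: if $A\subseteq B$ are $1$-sided ideals then $A^{[m]}\subseteq B^{[m]}$ for all $m$ (immediate induction). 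In particular $\big(K^{[n_0]}\big)^{[m]}\subseteq J^{[m]}=0$ for $m\ge n_0$, so $K^{[n_0]}$ is already a $K_1$-nilpotent $1$-sided ideal.

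The hard part is to upgrade this to $K_1$-nilpotency of $K$ itself, and this is exactly where the situation departs from Theorem \ref{sol_rad}. There the finish used the composition \emph{equality} $(H^{(m)_k})^{(r)_k}=H^{(m+r-1)_k}$ of Proposition \ref{equality}; for the present series, however, the fixed second factor in $I^{[k+1]}=[I^{[k]},I,L,\dots,L]$ is not the previous term, and one obtains only the one-sided inclusion $(I^{[p]})^{[q]}\subseteq I^{[p+q-1]}$ (because $I^{[p]}\subseteq I$), which is the wrong direction to conclude $K^{[2n_0-1]}\subseteq\big(K^{[n_0]}\big)^{[n_0]}=0$. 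Instead I would descend directly: prove by induction on $r$ that $K^{[m_r]}\subseteq J^{[r]}$ for a suitable schedule $m_r$ (with $m_1=n_0$), the inductive step bracketing $K^{[m_r]}\subseteq J^{[r]}$ against $K=J+d(J)$. The $J$-summand of the fixed factor deepens the power to $J^{[r+1]}$ directly, while the $d(J)$-summand is rewritten by means of \eqref{der} as $d$ applied to a bracket of depth $r+1$ together with correction terms. The main obstacle is precisely the bookkeeping for these correction terms: one must check that each of them again sits inside a bracket of $J$-depth $\ge r+1$ (possibly under a power of $d$), so that at $r=n_0$ every summand vanishes because $J^{[n_0]}=0$. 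Granting this descent, $K=J+d(J)$ is a $K_1$-nilpotent ideal; since the $K_1$-nilradical $J$ is the sum of all $K_1$-nilpotent ideals of $L$, we conclude $J+d(J)\subseteq J$, that is $d(J)\subseteq J$, as required.
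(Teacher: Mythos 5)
Everything in your first two paragraphs is correct and is exactly what the paper intends: $J+d(J)$ is an ideal, the preceding proposition gives $\big(d(J)\big)^{[n_0]}\subseteq J+d^{n_0}\big(J^{[n_0]}\big)=J$, and the induction $K^{[m]}\subseteq J+\big(d(J)\big)^{[m]}$ (using that $J$ is an ideal in every slot) yields $K^{[n_0]}\subseteq J$. Your diagnosis of where Theorem~\ref{sol_rad} stops transferring is also sharp and accurate: the finish there rests on the composition \emph{equality} of Proposition~\ref{equality}, and for the series $I^{[k+1]}=[I^{[k]},I,L,\dots,L]$ only the inclusion $\big(I^{[p]}\big)^{[q]}\subseteq I^{[p+q-1]}$ holds, which points the wrong way. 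Be aware that the paper's own ``proof'' of this theorem is literally the sentence that it follows ``using the same argumentation as in the proof of Proposition~\ref{inclusion} and Theorem~\ref{sol_rad}'', so your observation exposes a step that the paper itself glosses over rather than resolves.

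The problem is that your proposal stops exactly where the real work begins: the descent $K^{[m_r]}\subseteq J^{[r]}$ is never proved, only conjectured ``granting the bookkeeping'', and the bookkeeping is precisely what fails in the naive form you describe. Concretely, the first correction term one must control is $[J^{[r]},d(J),L,\dots,L]$. Solving for it with the derivation identity gives
\[
[J^{[r]},d(J),L,\dots,L]\ \subseteq\ d\big(J^{[r+1]}\big)+[d(J^{[r]}),J,L,\dots,L]+J^{[r+1]},
\]
and the new term $[d(J^{[r]}),J,L,\dots,L]$ lies in $J$ by the ideal property but has no visible $J$-depth $r+1$, nor is it a power of $d$ applied to a deep bracket. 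Applying the derivation identity to \emph{it} merely reproduces $[J^{[r]},d(J),L,\dots,L]$ modulo the same good terms: the two correction terms trade places, no quantity decreases, and the induction does not close. So, as written, the proposal is not a proof; completing it requires a genuinely new ingredient (for instance a simultaneous induction over all terms $[d^{a}(J^{[r]}),d^{b}(J^{[s]}),L,\dots,L]$ with the powers of $d$ tracked via \eqref{der}, in the spirit of Proposition~\ref{inclusion} but substantially more delicate). It is worth noting that already for binary Lie algebras the invariance of the nilradical under derivations in characteristic zero is classically obtained not by such a direct series computation but by adjoining $d$ in a semidirect product and invoking $[L,\operatorname{rad}(L)]\subseteq \operatorname{nil}(L)$ (cf.\ \cite{Jac}), which is machinery unavailable here; this is a strong hint that the missing bookkeeping is not routine. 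Identifying the gap in the paper's one-line argument is valuable, but it is not the same as filling it.
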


Analogously, we can establish similar results concerning the
nilpotency and $s$-nilpotency.

By induction it is not difficult to show that the sum of
$s$-nilpotent (nilpotent) ideals of Leibniz $n$-algebra
$L$ is also $s$-nilpotent (nilpotent, respectively)
ideal of $L$.

Now let $N$ be a maximal $s$-nilpotent (nilpotent)ideal
in a finite dimensional Leibniz $n$-algebra $L$ and let $M$ be an
arbitrary $s$-nilpotent ideal of $L$. Then  $N+M$ is
also $s$-nilpotent (nilpotent, respectively) and $N+M \supseteq
N$. Since $N$ is maximal $s$-nilpotent (nilpotent, respectively)
ideal, we obtain $N+M=N$. Therefore we can define the
maximal $s$-nilpotent (nilpotent, respectively) ideal as
the sum of all the  $s$-nilpotent (nilpotent, respectively) ideals in $L$
and call it the \emph{$s$-nilradical} (\emph{nilradical}, respectively).

\begin{pr}Let $J$ be the $s$-nilradical (nilradical) of a finite
dimensional Leibniz $n$-algebra $L$ over a field $\mathbb{K}$
 of characteristic zero. Then $\big(J+d(J)\big)^{<m>_s} \subseteq
J^{<m>_s}+\big(d(J)\big)^{<m>_s}$  \ \Big($\big(J+d(J)\big)^{m} \subseteq
J^{m}+\big(d(J)\big)^{m}$, respectively\Big).
\end{pr}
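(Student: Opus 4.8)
The plan is to establish both containments by a single induction on $m$, using only the additivity of the $n$-ary bracket in each of its arguments. The essential structural feature I would exploit is that, in each recursion step of both the $s$-nilpotency series and the nilpotency series, the iterated subset occupies exactly one argument of the bracket, all the remaining $n-1$ arguments being copies of $L$. Writing $M=J+d(J)$, the base case $m=1$ reduces to the identities $M^{<1>_s}=M=J+d(J)=J^{<1>_s}+(d(J))^{<1>_s}$ and $M^1=J^1+(d(J))^1$, so there is nothing to check there.

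For the inductive step in the $s$-nilpotency case I would assume $M^{<m>_s}\subseteq J^{<m>_s}+(d(J))^{<m>_s}$ and substitute this into the single active ($s$-th) slot of the defining bracket; monotonicity of the bracket in that slot followed by additivity then yields
\begin{multline*}
M^{<m+1>_s}=[\underbrace{L,\ldots,L}_{s-1},M^{<m>_s},\underbrace{L,\ldots,L}_{n-s}] \\
\subseteq[\underbrace{L,\ldots,L}_{s-1},J^{<m>_s},\underbrace{L,\ldots,L}_{n-s}]+[\underbrace{L,\ldots,L}_{s-1},(d(J))^{<m>_s},\underbrace{L,\ldots,L}_{n-s}],
\end{multline*}
and the two terms on the right are by definition $J^{<m+1>_s}$ and $(d(J))^{<m+1>_s}$. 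The nilpotency case is handled identically, applying the same splitting inside each summand of $M^{m+1}=\sum_{i=1}^n[\underbrace{L,\ldots,L}_{i-1},M^m,\underbrace{L,\ldots,L}_{n-i}]$: each summand again has $M^m$ in a single slot, so additivity splits it as a $J^m$-term plus a $(d(J))^m$-term, and summing over $i$ gives $M^{m+1}\subseteq J^{m+1}+(d(J))^{m+1}$.

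The point to watch --- and the genuine difference from Theorem \ref{sol_rad}, whose proof needed formula \eqref{der} together with the ideal property of $J$ --- is precisely that here only one bracket slot is active at each step. In the $k$-solvability recursion $H^{(m+1)_k}$ the iterated subset occupies $k$ slots, so expanding $(J+d(J))^{(m)_k}$ produces mixed terms carrying both $J$- and $d(J)$-type factors, which must then be absorbed into $J$. In the two series considered here no such mixed terms ever arise, so neither an absorption argument nor any appeal to the derivation property of $d$ is required, and the result is a purely multilinear consequence of additivity of the bracket. Accordingly I expect no substantive obstacle beyond setting up the single-slot additivity cleanly and keeping the indices consistent across the two series.
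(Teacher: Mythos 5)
Your proof is correct, and its skeleton (induction on $m$: expand one step of the series, substitute the inductive hypothesis) is exactly what the paper gestures at, since the paper's entire proof is the sentence ``Analogous to the proof of Proposition \ref{inclusion}.'' However, your route through the inductive step is genuinely more elementary than that reference suggests. The proof of Proposition \ref{inclusion} needs the characteristic-zero formula \eqref{der} and the ideal property of $I$, because the $k$-solvable series places the iterated subspace in $k$ of the $n$ slots, so expanding a sum there creates mixed terms that must be absorbed into $I$; the same issue is why Theorem \ref{sol_rad} invokes \eqref{der}. You observe, correctly, that in the series $J^{<m>_s}$ and $J^{m}$ the iterated subspace occupies exactly one slot, all the others being $L$, so multilinearity splits $\big(J+d(J)\big)^{<m+1>_s}$ (and each summand of $\big(J+d(J)\big)^{m+1}$) with no mixed terms at all. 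As a consequence your argument never uses that $d$ is a derivation, that $J$ is an ideal (let alone the $s$-nilradical), that $L$ is finite dimensional, or that the characteristic is zero: the stated containment (indeed, equality) holds for arbitrary subspaces in place of $J$ and $d(J)$. This localizes more sharply than the paper's citation does where those hypotheses actually enter the derivation-invariance argument of this section: not here, but in the companion estimate of Proposition \ref{inclusion} type and in the absorption step of Theorem \ref{sol_rad}. The one convention your proof silently relies on, and which you should state explicitly, is that for a subspace $A$ of $L$ the series $A^{<m>_s}$ and $A^{m}$ are formed with $L$ in all passive slots; this reading is the one consistent with the paper's definition of $H^{(m)_k}$ for ideals, and it is in any case forced by the occurrence of $\big(d(J)\big)^{<m>_s}$ in the statement, since $d(J)$ need not even be a subalgebra. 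Under the alternative reading, with the subspace itself in every slot, mixed terms would reappear and your argument would not go through as written.
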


\begin{proof} Analogous to the proof of Proposition \ref{inclusion}.
\end{proof}

\begin{thm}Let $J$ be the $s$-nilradical (nilradical) of a finite
dimensional Leibniz $n$-algebra $L$ over a field $\mathbb{K}$ of
characteristic zero. Then $d(J)\subseteq J$ for any $d\in
\Der(L)$.
\end{thm}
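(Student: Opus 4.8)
The plan is to follow the template of Theorem \ref{sol_rad} almost verbatim, with the $k$-solvable series replaced by the ( $s$-)lower central series and the splitting step supplied by the Proposition immediately preceding this statement. Throughout I read $(\cdot)^{<m>_s}$ (and $(\cdot)^m$) in the ambient sense already used for the $k$-solvable series, the filler slots being occupied by $L$; with this convention $J$ being the $s$-nilradical means $J^{<m>_s}=0$ for some $m\in\mathbb{N}$ (respectively $J^m=0$), and once a term of the series vanishes all later ones do too.

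First I would invoke the lower-central analogue of Proposition \ref{inclusion}, which the remark preceding these statements guarantees can be proved by the same induction on $m$ using \eqref{der}: for an ideal $I$ one has $\big(d(I)\big)^{<m>_s}\subseteq I+d^{N}\big(I^{<m>_s}\big)$ for a suitable power $N$ (each level of the series contributes only a single recursive factor and $I$ is an ideal, so all cross terms are absorbed into $I$). Applying this with $I=J$ and using $J^{<m>_s}=0$ yields $\big(d(J)\big)^{<m>_s}\subseteq J$, irrespective of $N$.

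Next I would combine this with the preceding (splitting) Proposition to get
\[\big(J+d(J)\big)^{<m>_s}\subseteq J^{<m>_s}+\big(d(J)\big)^{<m>_s}\subseteq 0+J=J.\]
It then remains to pass from ``one term of the series lies in $J$'' to genuine $s$-nilpotency of $J+d(J)$. Writing $W=J+d(J)\subseteq L$ and using the monotonicity of the operator $X\mapsto[\underbrace{L,\dots,L}_{s-1},X,\underbrace{L,\dots,L}_{n-s}]$ together with $W^{<m>_s}\subseteq J=J^{<1>_s}$, a one-line induction gives $W^{<m+r>_s}\subseteq J^{<r+1>_s}$ for every $r\geq 0$; taking $r=m-1$ produces $W^{<2m-1>_s}\subseteq J^{<m>_s}=0$. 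This telescoping is the substitute for Proposition \ref{equality}, which has no exact counterpart for the lower central series. Hence $W=J+d(J)$ is $s$-nilpotent.

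Finally, since $I+d(I)$ is always an ideal, $W$ is an $s$-nilpotent ideal containing $J$, so the maximality of the $s$-nilradical forces $J+d(J)=J$, that is, $d(J)\subseteq J$. The nilradical case is word for word the same, replacing $(\cdot)^{<m>_s}$ by $(\cdot)^m$ and using the nilpotent forms of the splitting Proposition and of the inclusion lemma. I expect the only real obstacle to be that inclusion lemma: exactly as in Proposition \ref{inclusion}, the delicate point is to extract the single derivation $d$ from the nested $n$-ary brackets via \eqref{der} while checking that every term in which a power of $d$ falls on an $L$-filler is swallowed by $J$. Granting that lemma (which the text asserts follows by the same argument), the remaining assembly and the telescoping are routine.
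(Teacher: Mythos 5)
Your proposal is correct and is essentially the paper's own proof: the paper disposes of this theorem with the single line ``Analogous to the proof of Theorem \ref{sol_rad}'', and your write-up carries out exactly that analogy --- the central-series version of the inclusion lemma, the stated splitting proposition, the fact that $J+d(J)$ is an ideal, and maximality of the radical. Your telescoping inclusion $W^{<m+r>_s}\subseteq J^{<r+1>_s}$ (giving $W^{<2m-1>_s}=0$) is the right substitute for Proposition \ref{equality} at the one point where the analogy is not literal, and your ambient ($L$-filler) reading of the series matches the convention the paper itself uses for the series of an ideal.
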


\begin{proof} Analogous to the proof of Theorem \ref{sol_rad}.
\end{proof}

\section{Non-conjugacy of Cartan Subalgebras}\label{no_conj}

In this section we consider Cartan and Frattini subalgebras of
Leibniz $n$-algebras.

\begin{defn}[\cite{cartan}]  A subalgebra $C$ of a Leibniz $n$-algebra $L$ is said to be Cartan subalgebra if
\begin{itemize}
  \item[a)] $C$ is 1-nilpotent;
 \item[b)] $C=N_1(C)$.
 \end{itemize}
 \end{defn}

The importance of considering 1-normalizer in the definition of
Cartan subalgebras was shown in \cite{Omirov}.

\begin{pr}[\cite{cartan}] Let $C$ be a nilpotent subalgebra of a Leibniz $n$-algebra $L$.
Then $C$ is a Cartan subalgebra if and only if it coincides with $L_0$
in the Fitting decomposition of the algebra $L$ with respect to $R(C)$.
\end{pr}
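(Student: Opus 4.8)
The plan is to observe first that, since a nilpotent Leibniz $n$-algebra is in particular $1$-nilpotent (as $L^{<k>_1}\subseteq L^k$), condition (a) in the definition of a Cartan subalgebra holds automatically for the nilpotent subalgebra $C$. Hence the statement reduces to the equivalence $C=N_1(C)\iff C=L_0$, where $L_0=L_0(R(C))$ is the common Fitting null-component of $L$ with respect to the family of right multiplications $R(c_2,\dots,c_n)$, $c_i\in C$. I would first record the ambient decomposition $L=L_0\oplus L_1$: each $R(c_2,\dots,c_n)$ leaves $L_0$ and $L_1$ invariant by the Fitting Lemma, and $L_0$ is a subalgebra by Lemma \ref{decomposition} (a product of null-component elements again lands in the null weight space). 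I would also note once and for all that $C\subseteq L_0$: for $c\in C$ the iterates $R(c_2,\dots,c_n)^m(c)$ are left-normed brackets lying in $C^{<m+1>_1}$, which vanish for large $m$ by $1$-nilpotency, so $c\in L_0(R(c_2,\dots,c_n))$ for every tuple, i.e. $c\in L_0$.

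For the implication $C=L_0\Rightarrow C=N_1(C)$ I would argue directly with Lemma \ref{desc_lem}. Since $C\subseteq N_1(C)$ always holds, take $z\in N_1(C)$ and fix any tuple $(c_2,\dots,c_n)\in C^{\times(n-1)}$; put $Q=R(c_2,\dots,c_n)$. The subalgebra $C$ is $Q$-invariant because $[C,c_2,\dots,c_n]\subseteq[C,\dots,C]\subseteq C$, and $Q(z)=[z,c_2,\dots,c_n]\in C$ by the choice of $z$. Lemma \ref{desc_lem} then yields $z-z_0\in C$, where $z_0$ is the $Q$-null component of $z$. But $z-z_0$ lies in the sum of the nonzero weight spaces $L_1(Q)$, while $C\subseteq L_0=L_0(R(C))\subseteq L_0(Q)$; since $L_0(Q)\cap L_1(Q)=0$ this forces $z=z_0\in L_0(Q)$. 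As the tuple was arbitrary, $z\in\bigcap_{c_i\in C}L_0(R(c_2,\dots,c_n))=L_0=C$, so $N_1(C)=C$.

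For the converse $C=N_1(C)\Rightarrow C=L_0$ only the inclusion $L_0\subseteq C$ remains. Here I would use that, $C$ being nilpotent, $R(C)$ is a nilpotent Lie subalgebra of $\Der(L)$ whose generators $R(c_2,\dots,c_n)$ restrict to nilpotent operators on the $0$-weight space $L_0$; consequently every element of $R(C)$ acts nilpotently on $L_0$, and $C$ is an $R(C)$-invariant subspace of $L_0$. Supposing $C\subsetneq L_0$, the induced action of the nilpotent Lie algebra $R(C)$ on the nonzero quotient $L_0/C$ is by nilpotent operators, so Engel's theorem produces $\bar v=v+C\neq 0$ annihilated by all of them, that is, an element $v\in L_0\setminus C$ with $[v,c_2,\dots,c_n]\in C$ for all $c_i\in C$. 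This says precisely $v\in N_1(C)=C$, a contradiction; hence $L_0\subseteq C$ and $C=L_0$.

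The delicate point---and the step I expect to be the main obstacle---is the weight-space input in the converse: one must know that the (generally noncommuting) family $R(C)$ organizes into a nilpotent Lie algebra of operators whose null-component $L_0$ is stable and on which every member acts nilpotently, so that the generalized Engel theorem applies to $L_0/C$. This is exactly where the full nilpotency of $C$, rather than mere $1$-nilpotency, is used; Lemma \ref{decomposition}, Proposition \ref{zerosum} and the Fitting Lemma supply the invariance and the weight-additivity needed to make this decomposition rigorous.
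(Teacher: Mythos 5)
First, a point of reference: the paper itself contains no proof of this proposition --- it is quoted verbatim from \cite{cartan} --- so your argument can only be measured against the proof given there, whose overall architecture you have in fact reproduced. Your first direction ($C=L_0\Rightarrow C=N_1(C)$) via Lemma \ref{desc_lem} is correct and complete as written, once $L_0$ is read as $\bigcap_{c_i\in C}L_0\big(R(c_2,\dots,c_n)\big)$ (the only sense one can give to the Fitting null component of the family before the structure of $R(C)$ is understood), and your converse is correctly organized as an Engel-type argument on the induced action on $L_0/C$.

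The genuine gap is precisely the ingredient you flag as ``the delicate point'' and then do not supply: that $R(C)$ is a nilpotent Lie algebra of operators on $L$, that $L_0$ is invariant under it, and that its elements act nilpotently on $L_0$. The three results you invoke for this --- Lemma \ref{decomposition}, Proposition \ref{zerosum} and the Fitting Lemma --- cannot deliver it: each concerns a single operator (or the root decomposition relative to a single derivation) and says nothing about how $R(c_2,\dots,c_n)$ and $R(c'_2,\dots,c'_n)$ interact. In particular, they do not even show that $L_0\big(R(c')\big)$ is invariant under $R(c)$ for a different tuple $c$, and without that invariance the quotient operators on $L_0/C$ in your converse are not defined. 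What does deliver all of this is the commutator identity obtained from \eqref{FI} (equivalently, from the derivation property of $R(y_2,\dots,y_n)$):
\[
[R(y_2,\dots,y_n),R(x_2,\dots,x_n)]=\sum_{i=2}^{n}R\big(x_2,\dots,[x_i,y_2,\dots,y_n],\dots,x_n\big).
\]
This shows that $R(C)$ is closed under brackets (because $C$ is a subalgebra), and, since the displaced entry $[x_i,y_2,\dots,y_n]$ lies in $C^{<k+1>_1}$ whenever $x_i\in C^{<k>_1}$, each commutator pushes one entry one step down the lower central series of $C$; nilpotency of $C$ then forces the Lie algebra $R(C)$ to be nilpotent. (Note in passing that $1$-nilpotency of $C$ already suffices for this, since the displaced entry always occupies the first position of the new bracket --- so this step is not, as you claim, where full nilpotency is essential.) Only with this lemma in hand do the standard facts for nilpotent Lie algebras of linear operators become available --- invariance of each $L_0\big(R(c')\big)$ under every $R(c)$, the identification of $L_0$ with the null weight space, and the nilpotency of the induced action on $L_0$ --- after which your Engel argument on $L_0/C$ closes the converse. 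With this ingredient supplied, your proof is correct and follows the same route as the proof in \cite{cartan}.
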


Similarly as in \cite{Kasymov}, if $L$ is a direct sum of Leibniz
$n$-algebras $L_i,\, 1\leq i \leq k$, and $C_i$ are Cartan
subalgebras of $L_i$, then $C= \oplus_{i=1}^k C_i$ is a Cartan subalgebra of
$L$ and any Cartan subalgebra of $L$ has the same form.

The following result
concerning the regular elements of a Leibniz $n$-algebra was established in \cite{cartan}:

\begin{thm}[\cite{cartan}] Let $L$ be a Leibniz $n$-algebra over an infinite
field and let $x$ be a regular element for $L$. Then the Fitting
null-component $L_0$ with respect to the operator $R(x)$ is a
1-nilpotent subalgebra of $L$.
\end{thm}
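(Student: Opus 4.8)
The plan is to treat $d := R(x)$ as a derivation of $L$ and to exploit the root space decomposition $L = \bigoplus_{\alpha} L_{\alpha}$ of $d$, together with the Fitting Lemma decomposition $L = L_0 \oplus L_1$, where $L_0 = L_0(R(x))$ and $L_1 = L_1(R(x)) = \bigoplus_{\alpha \neq 0} L_{\alpha}$. Two structural facts come first. Since $0 + \dots + 0 = 0$, Lemma \ref{decomposition} gives $[L_0, \dots, L_0] \subseteq L_0$, so $L_0$ is a subalgebra; by Engel's theorem applied to $L_0$ it then suffices to prove that $R(y_2, \dots, y_n)|_{L_0}$ is nilpotent for every $(y_2, \dots, y_n) \in L_0^{\times (n-1)}$. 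Moreover, for such a tuple Lemma \ref{decomposition} yields $[L_{\alpha}, y_2, \dots, y_n] \subseteq [L_{\alpha}, L_0, \dots, L_0] \subseteq L_{\alpha}$, so $R(y_2, \dots, y_n)$ stabilises every root space, and in particular both $L_0$ and $L_1$. Since the Fitting Lemma makes $R(x)|_{L_1}$ an automorphism, this invertibility on $L_1$ is the lever through which I would bring regularity to bear.

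Next I would run the genericity argument that transports the classical Lie-algebra proof to this setting. For $h \in L^{\times (n-1)}$ set $\chi_h(\lambda) = \det(\lambda\,\mathrm{Id} - R(h))$; the multiplicity of $\lambda = 0$ in $\chi_h$ is exactly $\dim L_0(R(h))$, and regularity of $x$ says precisely that this multiplicity attains its minimum $r := \dim L_0$ at $h = x$. Because $h \mapsto R(h)$ is multilinear, the coefficients of $\chi_h$ are polynomial functions of $h$, so over the infinite ground field the condition $\dim L_0(R(h)) = r$ defines a dense open locus: in particular the coefficient of $\lambda^r$ is a nonzero polynomial that does not vanish at $x$. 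Assuming, towards a contradiction, that $R(y_2, \dots, y_n)|_{L_0}$ had a nonzero eigenvalue for some tuple in $L_0^{\times(n-1)}$, the aim is to manufacture a genuine right multiplication operator $R(h)$ whose restriction to $L_1$ remains invertible (an open condition, guaranteed near $x$) while acquiring that nonzero eigenvalue on the $L_0$-direction, thereby producing $\dim L_0(R(h)) < r$ and contradicting regularity. Minimality of $r$ therefore forces every eigenvalue of $R(y_2, \dots, y_n)|_{L_0}$ to be zero, and a final appeal to Engel's theorem gives that $L_0$ is $1$-nilpotent.

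The hard part is exactly where Leibniz $n$-algebras with $n \geq 3$ diverge from Lie algebras: the assignment $h \mapsto R(h)$ is multilinear rather than linear, so the naive one-parameter deformation $R(x) + t\,R(y_2, \dots, y_n)$ of the Lie case is not itself a right multiplication operator, and regularity cannot be applied to it. One is thus forced to work with an honest polynomial family $h(t) \in L^{\times(n-1)}$ of tuples and to analyse the coefficients of $\chi_{h(t)}(\lambda)$ as polynomials in $t$, pinning the null multiplicity to $r$ for generic $t$ from the non-vanishing of the $\lambda^r$-coefficient at the base point. The delicate point is that the intermediate terms of such a family need not stabilise the Fitting decomposition, so they introduce off-diagonal coupling between $L_0$ and $L_1$ that must be controlled before the eigenvalue of the $L_0$-block can be read off; this is where I expect to invoke Proposition \ref{zerosum}, which expresses $R(x)$ as a sum of zero-weight right multiplication operators, to keep the analysis confined to root-space-preserving pieces, and where the infiniteness of the field is essential.
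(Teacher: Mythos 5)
First, a point of comparison: the paper does not prove this theorem at all --- it is quoted from \cite{cartan} --- so there is no in-paper argument to measure your proposal against; it has to stand on its own. Its first half does: $L_0$ is a subalgebra by Lemma \ref{decomposition} (modulo extending scalars, since that lemma is stated over $\mathbb{C}$ while the theorem assumes only an infinite field), Engel's theorem correctly reduces the claim to nilpotency of $R(y_2,\dots,y_n)|_{L_0}$ for tuples from $L_0$, and such operators do stabilise every root space. The genuine gap is the step you yourself flag as ``the hard part'' and then leave unresolved: producing a family of honest right multiplication operators joining $R(x)$ to $R(y)$ along which the Fitting decomposition stays controlled. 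Expanding $R\big((1-t)x_2+ty_2,\dots,(1-t)x_n+ty_n\big)$ multilinearly produces mixed operators $R(\dots,x_i,\dots,y_j,\dots)$, and these do \emph{not} preserve $L_0\oplus L_1$, precisely because in a Leibniz $n$-algebra with $n\geq 3$ the entries $x_i$ of a regular tuple need not lie in $L_0(R(x))$. This is exactly where the Lie $n$-algebra proof uses skew-symmetry: there $[x_i,x_2,\dots,x_n]=0$ forces $x_i\in\Ker R(x)\subseteq L_0$, so the whole segment from $x$ to $y$ has entries in $L_0$ and every operator in the family preserves the root spaces; and it is where the lemma of Barnes (\cite{barnes2}, quoted in Section \ref{rmo}) rescues the case $n=2$ by replacing $x$ with a tuple $b\in L_0$ satisfying $L_0(R(b))=L_0(R(x))$. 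For $n\geq 3$ the paper's own example following Corollary \ref{eigen} shows that such a replacement does not exist in general, so this is not a routine detail one can wave at.

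Your proposed repair --- invoking Proposition \ref{zerosum} --- does not close the gap. That proposition only rewrites $R(x)$ itself as a sum of right multiplication operators of weight zero with respect to $R(x)$; it says nothing about the mixed operators arising in your one-parameter family, whose entries combine root vectors of nonzero weight (the components of the $x_i$) with elements of $L_0$, and which therefore still shift root spaces and couple $L_0$ to $L_1$. Likewise, ``an open condition, guaranteed near $x$'' does no work here: the tuple $y$ is not near $x$, and without invariance along the family the invertibility of the $L_1$-block at $t=0$ transfers to nothing at $t=1$. Note finally that the contradiction you aim for --- an honest right multiplication operator $R(h)$ with $\dim L_0(R(h))<r$ --- is exactly what regularity forbids for \emph{every} $h$; a correct proof of this shape must instead show that for generic $t$ the operator in the family is invertible on a complement of $L_0$ and has nilpotent $L_0$-block, and then specialise to $t=1$ by polynomiality. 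That generic statement is precisely what the missing invariance argument would have to supply. As written, the proposal is an accurate map of where the difficulty sits, but the load-bearing step is absent.
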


In Leibniz algebras and Lie $n$-algebras the corresponding theorem states
that $L_0$ is a Cartan subalgebra. However, in \cite{cartan} we give
an example of a Leibniz $n$-algebra in which this result is not
true. Now we establish this result under some restrictions.

\begin{pr}Let $L$ be a Leibniz $n$-algebra over an infinite
field and let $x=(x_2,\dots,x_n)\in L^{\times (n-1)}$ be a regular
element for $L$ such that $x_2,\dots,x_n \in
L_0(R(x_2,\dots,x_n))$. Then the Fitting null-component $L_0$
with respect to operator $R(x)$ is a Cartan subalgebra of $L$.
\end{pr}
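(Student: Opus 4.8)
The plan is to verify the two defining conditions of a Cartan subalgebra directly for $L_0 = L_0(R(x))$, relying on two results already available in the excerpt. Condition (a), $1$-nilpotency, requires no new work: since the field is infinite and $x$ is regular, the theorem of \cite{cartan} quoted above states precisely that the Fitting null-component $L_0$ with respect to $R(x)$ is a $1$-nilpotent subalgebra of $L$. In particular $L_0$ is a subalgebra, a fact I will need for the second condition.

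For condition (b), $L_0 = N_1(L_0)$, I would invoke the lemma proved above asserting that whenever $a_2, \dots, a_n \in L_0(R(a_2, \dots, a_n))$, every subalgebra $U$ containing $L_0(R(a_2, \dots, a_n))$ satisfies $N(U) = U$. The hypotheses match the present situation exactly: the assumption $x_2, \dots, x_n \in L_0(R(x_2, \dots, x_n))$ is the required condition on the multiplicands, and one takes $U = L_0$, which trivially contains $L_0(R(x)) = L_0$ and is a subalgebra by the first step. The key is that the proof of that lemma first establishes the sharper equality $N_1(U) = U$ --- via Lemma \ref{desc_lem} applied to the invariant subspace $U$ and the transformation $R(x)$ --- before intersecting with the remaining normalizers $N_s(U)$. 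Applied with $U = L_0$, this yields exactly $N_1(L_0) = L_0$, and hence $L_0$ is both $1$-nilpotent and self-$1$-normalizing, i.e.\ a Cartan subalgebra.

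The step deserving the most care is confirming that the normalizer lemma genuinely supplies $N_1(L_0) = L_0$ rather than merely the weaker $N(L_0) = L_0$. Since $N(X) = \bigcap_{s} N_s(X) \subseteq N_1(X)$, the stated conclusion $N(L_0) = L_0$ does not by itself force $N_1(L_0) = L_0$. One must therefore read off from the lemma's proof the intermediate identity $N_1(U) = U$; combined with the inclusion $N_1(L_0) \supseteq L_0$ that holds for any subalgebra, this gives the equality in the direction required by the definition of a Cartan subalgebra.
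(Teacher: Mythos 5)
Your proposal is correct and follows essentially the same route as the paper: the paper likewise quotes the theorem on regular elements for $1$-nilpotency and then verifies $N_1(L_0)=L_0$ directly, its argument being exactly the $U=L_0$ case of the normalizer lemma's proof (via Lemma~\ref{desc_lem}) that you cite. Your observation that one needs the intermediate identity $N_1(U)=U$ from that lemma's proof, rather than the weaker stated conclusion $N(U)=U$, is a legitimate and careful reading, not a deviation.
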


\begin{proof} Due to the previous theorem, we need to prove $N_1(L_0)=L_0$.
Let $y\in N_1(L_0)$. Then $[y,x_2,\dots,x_n]\in
[y,L_0,\dots,L_0]\subseteq L_0$. Hence $y\in L_0$. Therefore,
$N(L_0)\subseteq L_0$ and since $L_0$ is a subalgebra $N_1(L_0)\supseteq
L_0$. Thus, $L_0=N_1(L_0)$ and $L_0$ is a Cartan subalgebra of $L$.
\end{proof}

Now let us construct a Leibniz $n$-algebra $L$ such that the quotient $n$-algebra $L/I$ is a
simple Lie $n$-algebra, where
\[ I =  \mbox{ideal} \  \langle [x_1,\dots,x_i, \dots
, x_j, \dots , x_n] \ | \ \exists \, i, j: x_i=x_j \rangle\] is an
ideal of $L$.

\begin{exam}\label{examp_simple}
Let $\{e_1,\dots, e_{n+1},x_1,\dots, x_m\}$ be a basis of $L$.

Consider an algebra with the following multiplication:
\begin{align*}[e_{1}, \dots, e_{i-1},e_{i+1},\dots,
e_{n+1}] \ = & \ e_{i} \\
[x_k,e_j,\dots, e_j] \ = & \  \alpha_{kj} x_k \, ,
\end{align*}
where $ 1 \leq i,j\leq n+1,\, 1\leq k \leq
m, \ |\alpha_{k1}|^2+\dots+|\alpha_{k \, n+1}|^2 \neq 0$ for all $k$,
and the multiplication is skew symmetric in all the variables on $\langle
e_1,\dots,e_{n+1}\rangle$.

Then this algebra is a Leibniz $n$-algebra and $I=\langle x_1,
\dots, x_m\rangle $.
\end{exam}

Note that  $L/I$ is a simple Lie $n$-algebra and by \cite[Theorem 2.2]{Bai1}
we have that $F(L/I)=0$. Hence $F(L) \subseteq I$.

\begin{pr} In Example \ref{examp_simple}, $F(L)=0$.
\end{pr}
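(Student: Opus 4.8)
The plan is to exploit the inclusion $F(L)\subseteq I=\langle x_1,\dots,x_m\rangle$, which has already been established, by producing for each index $k$ a maximal subalgebra of $L$ that misses $x_k$; intersecting these maximal subalgebras will then squeeze $F(L)$ down to $0$. Concretely, for each $k$ with $1\le k\le m$ I would set
\[
M_k=\langle e_1,\dots,e_{n+1}\rangle + \langle x_l \mid 1\le l\le m,\ l\neq k\rangle ,
\]
a subspace of codimension one in $L$ that does not contain $x_k$.

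First I would verify that each $M_k$ is a subalgebra. Reading off the multiplication of Example \ref{examp_simple}, the only products among basis vectors that are not forced to vanish are the skew-symmetric brackets of the $e_i$ (which stay inside $\langle e_1,\dots,e_{n+1}\rangle$) and the brackets $[x_l,e_j,\dots,e_j]=\alpha_{lj}x_l$. Hence any nonzero bracket of elements of $M_k$ is again a multiple of some $e_i$ or of some $x_l$; the single mechanism in the table that could output a multiple of $x_k$ is $[x_k,e_j,\dots,e_j]$, which requires $x_k$ itself as an argument, and $x_k\notin M_k$. Thus no bracket of vectors of $M_k$ can produce $x_k$, so $M_k$ is closed under the multiplication. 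Since $M_k$ is a proper subspace of codimension one, no subalgebra can lie strictly between $M_k$ and $L$, and therefore $M_k$ is a maximal subalgebra.

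Finally, because $F(L)$ is contained in every maximal subalgebra, in particular $F(L)\subseteq\bigcap_{k=1}^m M_k$. An element $\sum_i a_ie_i+\sum_l c_lx_l$ lies in $M_k$ exactly when its $x_k$-coordinate $c_k$ vanishes, so it lies in all the $M_k$ precisely when every $c_k=0$; that is, $\bigcap_{k=1}^m M_k=\langle e_1,\dots,e_{n+1}\rangle$. Combining this with $F(L)\subseteq I=\langle x_1,\dots,x_m\rangle$ yields
\[
F(L)\subseteq \langle e_1,\dots,e_{n+1}\rangle\cap\langle x_1,\dots,x_m\rangle=0 .
\]
The main obstacle will be the closure check for $M_k$, namely confirming from the sparse multiplication table that no product of vectors avoiding $x_k$ can create a multiple of $x_k$; once that is settled, the maximality (by the codimension-one argument) and the intersection computation are immediate.
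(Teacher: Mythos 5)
Your proposal is correct and follows essentially the same route as the paper: you construct exactly the same codimension-one subalgebras (the paper's $L_k$, your $M_k$), verify closure from the multiplication table, conclude maximality from codimension one, and intersect them to get $F(L)\subseteq\langle e_1,\dots,e_{n+1}\rangle$, which together with the previously established inclusion $F(L)\subseteq I$ gives $F(L)=0$. The only difference is that you spell out the closure check in more detail than the paper, which simply reads it off the multiplication table.
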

\begin{proof}
Consider the subspaces \[L_k=\langle e_1,\dots, e_{n+1}, x_1,\dots,
x_{k-1}, x_{k+1}, \dots, x_m \rangle ,\, 1\leq k \leq m.\] From
the multiplication table we get that they are subalgebras. Since
the dimension of these subalgebras is $n+m=\dim L -1$, they are
maximal subalgebras.

Hence, $\displaystyle F(L) \subseteq \bigcap_{k=1}^m L_k =\langle
e_1,\dots, e_{n+1}\rangle$. But $F(L) \subseteq I=\langle x_1,
\dots, x_m \rangle$. Thus $F(L)=0$.
\end{proof}

Below, we present a more general construction.

Let us consider an arbitrary Lie $n$-algebra with the basis
$e_1,\dots e_{n+1}$ and the conditions
\[[e_i,f_2,\dots, f_n]\in \langle e_1,\dots, e_{i-1},
e_{i+1},\dots, e_{n+1}\rangle,\] for all $f_2,\dots, f_n \in
\{e_1,\dots, e_{n+1}\}, \, 1\leq i \leq n+1$.

One of the Lie $n$-algebras with these conditions is a simple
Lie $n$-algebra. Complement this algebra with independent vectors
$x_1,\dots,x_m$ with the following multiplication
\[[x_k,e_p,\dots, e_p]=\alpha_{kp}^1x_1+\alpha_{kp}^2 x_2 +\dots +
\alpha_{kp}^m x_m\] for all $1\leq k \leq m, 1\leq p \leq n+1$.
Checking identity \eqref{FI} we will find restrictions on the
coefficients $\alpha_{ij}^k:$
\[\sum_{i=1}^m \alpha_{kp}^i \alpha_{iq}^j =\sum_{i=1}^m
\alpha_{kq}^i \alpha_{ip}^j\] for all $1\leq k,j\leq m,\, 1\leq p,
q \leq n+1$.

Hence, the satisfaction of the above condition guaranties that the
supplemented algebra is a Leibniz $n$-algebra.

Particularly, in this way, one can supplement simple Lie
$n$-algebras till Leibniz $n$-algebras.

On the ground of Example \ref{examp_simple} we give the following

\begin{exam} \label{cartan_examp} Let $L_s \ (1\leq s \leq n+1)$ be a
Leibniz $n$-algebra with the basis $\langle e_1,e_2,\dots,
e_{n+1},x_1,\dots,x_m\rangle$ and the following multiplication:
\begin{align*}
[e_1,\dots,e_{p-1},e_{p+1},\dots,e_{n+1}] \ = & \ e_p\, ,  & 1\leq p \leq n+1,\\
[x_k,e_k,e_k,\dots,e_k] \ = &  \ x_k \,,  & 1\leq k \leq s,\\
[x_{s+i},e_s,e_s,\dots,e_s] \ = &  \ x_{s+i}\, , &  1\leq i \leq m-s \, ,
\end{align*}
where the multiplication is skew symmetric in all the variables on
$\langle e_1,e_2,\dots, e_{n+1}\rangle$.

Then \[\begin{array}{rl}
  H_1= & \langle e_1,e_2,\dots,e_s, e_{s+1},\dots,e_{n-1}\rangle \\
  H_2= & \langle e_1,e_2,\dots,e_s, e_{s+2},\dots,e_{n-1},e_n \rangle \\
  H_3= & \langle e_1,e_2,\dots,e_s, e_{s+3},\dots,e_{n-1},e_{n+1} \rangle \\
\end{array}\]
are $n-1$ dimensional Cartan subalgebras.

The subalgebras
\begin{align*}
N_1  \ = & \  \langle x_1,e_2,e_3,\dots,e_n \rangle \\
N_2  \ = & \  \langle e_1,x_2,e_3,\dots,e_n \rangle\\
 \vdots & \\
N_{s-1}  \ = & \  \langle e_1,\dots,e_{s-2},x_{s-1},e_s,\dots,e_n \rangle
\end{align*}
are $n$ dimensional Cartan subalgebras.

The subalgebras
\begin{align*}
M_1  \ = & \  \langle e_1,e_2,\dots,e_{s-1},e_{s+1},e_{s+2},\dots,e_n,x_s,x_{s+1},\dots, x_m \rangle \\
M_2   \ = & \  \langle e_1,e_2,\dots,e_{s-1},e_{s+2},e_{s+3},\dots,e_{n+1},x_s,x_{s+1},\dots, x_m \rangle
\end{align*}
are $m+n-s$ dimensional Cartan subalgebras.
\begin{align*}
C_1  \ = & \  \langle x_1,e_2,\dots,e_{s-1},e_{s+1},\dots,e_{n+1},x_s,x_{s+1},\dots, x_m \rangle \\
C_2  \ = & \  \langle e_1,x_2,\dots,e_{s-1},e_{s+1},\dots,e_{n+1},x_s,x_{s+1},\dots, x_m \rangle \\
 \vdots  & \\
C_{s-1} \ = & \  \langle e_1,\dots,e_{s-2},x_{s-1},e_{s+1},\dots,e_{n+1},x_s,x_{s+1},\dots, x_m \rangle
\end{align*}
are $m+n-s+1$ dimensional Cartan subalgebras.
\end{exam}

In the considered Leibniz $n$-algebra we found Cartan subalgebras of dimensions
$n-1, n , n+m-s$ and $n+m-s+1$. Hence, in general, Cartan subalgebras of a given
Leibniz $n$-algebra are not conjugated.

Here we give a theorem that establishes the conjugacy  of Cartan
subalgebras under some restrictions on the Leibniz $n$-algebra.

\begin{thm} \label{condition} Let $L$ be a finite dimensional Leibniz $n$-algebra and $H$
be a Cartan subalgebra of $L$. Suppose that
\begin{itemize}
  \item[(i)]  the multiplication is  skew symmetric in the first two variables; and that
   \item[(ii)] for any element $h=(h_1,\dots,h_{n-1})\in H^{\times (n-1)}$, we
have $h_i\in \Ker R(h)$ for all $1\leq i \leq n-1$.
\end{itemize}
Then there is a regular element $h\in H^{\times (n-1)}$ such that
$H=L_0(R(h))$.
\end{thm}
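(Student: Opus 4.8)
The plan is to produce the required element by combining the preceding Proposition with the self-normalizing property of Cartan subalgebras, reducing the whole statement to a single density argument. First I record two facts valid for \emph{every} $h=(h_1,\dots,h_{n-1})\in H^{\times(n-1)}$. By hypothesis (ii) each $h_i\in\Ker R(h)\subseteq L_0(R(h))$, so every element of $H^{\times(n-1)}$ automatically satisfies the side condition $h_i\in L_0(R(h))$ appearing in the preceding Proposition. Moreover, since $H$ is a subalgebra and $h_1,\dots,h_{n-1}\in H$, the operator $R(h)$ maps $H$ into $H$; as $H$ is $1$-nilpotent, Engel's theorem applied to $H$ shows that $R(h)|_H$ is nilpotent, whence $H\subseteq L_0(R(h))$ for all $h\in H^{\times(n-1)}$.

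Granting for the moment that $H^{\times(n-1)}$ contains a regular element $h^\ast$ of $L$, the statement follows quickly. By the first paragraph $h^\ast$ satisfies $h^\ast_i\in L_0(R(h^\ast))$, so the preceding Proposition guarantees that $C:=L_0(R(h^\ast))$ is a Cartan subalgebra, while the first paragraph also gives $H\subseteq C$. To upgrade this inclusion to an equality, suppose $H\subsetneq C$. Since $C$ is $1$-nilpotent, I may choose the largest $k$ with $C^{<k>_1}\not\subseteq H$ and any $w\in C^{<k>_1}\setminus H$; then $[w,h_2,\dots,h_n]\in C^{<k+1>_1}\subseteq H$ for all $h_2,\dots,h_n\in H$, so $w\in N_1(H)=H$, contradicting $w\notin H$. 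Hence $H=C=L_0(R(h^\ast))$, and $h^\ast$ is the required regular element.

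The whole argument therefore rests on the remaining claim, which I expect to be the main obstacle: that $H^{\times(n-1)}$ contains a regular element of $L$, equivalently that $\dim H$ realizes the global minimum $d=\min_{y\in L^{\times(n-1)}}\dim L_0(R(y))$. Here is where hypotheses (i) and (ii) and the infinite base field are essential. The function $y\mapsto\dim L_0(R(y))$ is Zariski upper semicontinuous on $L^{\times(n-1)}$, so the regular locus is dense and open; the problem is to show it meets the subvariety $H^{\times(n-1)}$. I would first use Lemma \ref{decomposition} together with the skew-symmetry hypothesis (i) to organise $L$ into the weight-space decomposition $L=H\oplus\bigoplus_{\lambda\neq 0}L_\lambda$ for the action of $H$ by right multiplications, so that $L_0(R(h))=H$ precisely when $h$ avoids the finitely many proper vanishing loci $\{\lambda=0\}$; over an infinite field a generic $h\in H^{\times(n-1)}$ then satisfies $L_0(R(h))=H$ and hence $\dim L_0(R(h))=\dim H$. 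The delicate point, requiring (i) and (ii) in full force, is that this generic value $\dim H$ actually equals the global minimum $d$ rather than merely a minimum along $H^{\times(n-1)}$: without these hypotheses a Cartan subalgebra may be strictly larger than a regular null-component, exactly as the subalgebras $N_i$, $M_i$, $C_i$ of Example \ref{cartan_examp} illustrate. Establishing $\dim H=d$ under (i) and (ii) is the crux of the proof.
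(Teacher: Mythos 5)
Your first two paragraphs are sound: hypothesis (ii) plus Engel's theorem applied to the $1$-nilpotent subalgebra $H$ does give $H\subseteq L_0(R(h))$ for every $h\in H^{\times(n-1)}$, and your normalizer argument (largest $k$ with $C^{<k>_1}\not\subseteq H$, then $w\in N_1(H)=H$) correctly upgrades the inclusion $H\subseteq C=L_0(R(h^\ast))$ to equality once a regular $h^\ast\in H^{\times(n-1)}$ is granted. But this is only a reduction of the theorem to the claim you then leave unproved — that $H^{\times(n-1)}$ actually contains a regular element of $L$, i.e.\ that $\dim H$ equals the global minimum $d=\min_{y\in L^{\times(n-1)}}\dim L_0\big(R(y)\big)$ — and, as you say yourself, that claim is ``the crux of the proof.'' So what you have is not a proof: the entire content of the theorem, and the only place where hypotheses (i) and (ii) genuinely work, is exactly the step you skip. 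Your genericity sketch cannot close it: upper semicontinuity and the root-space decomposition of $L$ with respect to $H$ (which the paper obtains without invoking (i)) only show that a generic $h^0\in H^{\times(n-1)}$ satisfies $L_0\big(R(h^0)\big)=H$, i.e.\ that $\dim H$ is the minimum of $\dim L_0\big(R(y)\big)$ \emph{along} $H^{\times(n-1)}$; no semicontinuity argument compares this value with the values attained at tuples outside $H^{\times(n-1)}$.

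The paper closes this gap with a dominance argument taken from Kasymov \cite{Kasymov}. After choosing $h^0\in H^{\times(n-1)}$ with $\alpha_1(h^0)\cdots\alpha_s(h^0)\neq 0$ (so that $L_0\big(R(h^0)\big)=H$ and $R(h^0)$ is non-degenerate on $\sum_{\alpha\in\Delta}L_\alpha$), one defines the polynomial map
\[P(x)=\exp R(x_1,h^0_1,\dots,h^0_{n-2})\cdots \exp R(x_s,h^0_1,\dots,h^0_{n-2})(h),\qquad x=h+x_1+\dots+x_s,\ h\in H,\ x_i\in L_{\alpha_i},\]
whose values are images of elements of $H$ under special automorphisms of $L$. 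Hypothesis (i), the skew-symmetry in the first two variables, is used precisely to show that the differential $d_{h^0}P$ is an epimorphism, hence that $P$ is dominating: for any nonzero polynomial $f$ on $L$ there is a nonzero polynomial $g$ such that every $y$ with $g(y)\neq 0$ is of the form $y=P(x)$ with $f(x)\neq 0$. Dominance is the mechanism that relates $H$ to the rest of $L$: it forces automorphic images of $H$ to reach the Zariski-dense locus of regular elements, while hypothesis (ii) supplies the fixed-point identity $P(h_i)=h_i$ needed to run Kasymov's induction and conclude that a regular element lies in $H^{\times(n-1)}$ with $H=L_0\big(R(h)\big)$. Nothing in your proposal plays the role of $P$; without some device that moves points of $L$ outside $H$ into (automorphic images of) $H$, minimality of $\dim H$ over all of $L^{\times(n-1)}$ is out of reach.
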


\begin{proof} Suppose that $H$ is a Cartan subalgebra of a Leibniz
$n$-algebra and $L=L_0\oplus L_{\alpha_1}\oplus\dots\oplus
L_{\alpha_s}$ is the decomposition of $L$ into a direct sum of
root subspaces with respect to $H$ and $\Delta =\{ \alpha_1,\dots,
\alpha_s\}$ is the set of non-zero roots of $H$ in $L$. Then the
functions $\alpha_i$ are multilineal and, in particular,
polynomial.  Since $H^{\times (n-1)}$ is an irreducible variety,
it follows that $\alpha_1\alpha_2\cdots \alpha_s$ is also a
non-zero polynomial function from $H^{n-1}$ to the ground field of the
Leibniz $n$-algebra. Hence, $\alpha_1(h^0)\alpha_2(h^0)\cdots
\alpha_s(h^0)\neq 0$ for some $h^0=(h_0,h^0_1,\dots,h^0_{n-2})$.
This means that the characteristic roots of the restriction
$\overline{R}(h^0)$ of the endomorphism $R(h^0)$ to
$L_1\big(R(h^0)\big)=\sum_{\alpha\in \Delta} L_{\alpha}$ are all nonzero,
and hence $\overline{R}(h^0)$ is a non-degenerate operator.

The proof of the theorem is based on the proof of conjugacy of Cartan
subalgebras in Lie $n$-algebras given by Kasymov \cite{Kasymov}.
Similarly, we define a polynomial function $P$ on $L$ by
\[P(x)=\exp R(x_1,h^0_1,\dots, h^0_{n-2})\cdots \exp R(x_s,h^0_1,\dots,
h^0_{n-2})(h),\] where $x=h+x_1+\dots+x_s, h\in H=L_0\big(R(h^0)\big), \ x_i
\in L_{\alpha_i}$.

Notice that, if a right multiplication operator $R(x)$ is
nilpotent, then $\exp R(x)$ is an inner automorphism of the
Leibniz $n$-algebra $L$. Automorphisms of this kind generate a
certain subgroup $G_0$ in the group $G=\Aut L$. Elements of $G_0$
are called \emph{special (invariant) automorphisms}.

Using the skew symmetrical property of the multiplication in the first two
variables, we establish that the differential $d_{h^0}P$ of $P$ at
a point $h^0$ is an epimorphism. Hence, by facts from algebraic
geometry in \cite{Kasymov}, this polynomial function $P$ is
dominating, i.e. for any non-zero polynomial function $f$ on $L$ there
exists a non-zero polynomial function $g$ on $L$ such that every
$y\in L$ with $g(y)\neq 0$ is represented as $y=P(x)$, where
$f(x)\neq 0$.

Assuming that for any regular element $h=(h_1,\dots,h_{n-1})\in
H^{\times (n-1)}$ we have $h_i\in \Ker R(h)$ for all $1\leq i \leq
n-1$, then
\[P(h_i)=\left(\prod_{j=1}^s \exp
R(x_j,h_1,\dots,h_i,\dots, h_{n-2})\right) (h_i)=h_i.\] Hence, we
can use similar induction as in \cite{Kasymov} to prove the
existence of a regular element $h\in H^{\times (n-1)}$ such that
$H=L_0\big(R(h)\big)$.
\end{proof}

Under the conditions of Theorem \ref{condition} the following
theorem can be proved similarly as in the case of Lie
$n$-algebras \cite{Kasymov}.

\begin{thm} Let $L$ be a finite-dimensional Leibniz $n$-algebra
which satisfies the conditions (i)-(ii) of Theorem
\ref{condition}. If $H$ and $K$ are Cartan subalgebras of $L$,
then there exists a special automorphism $\delta$ of $L$ such that
$H=\delta(K)$.
\end{thm}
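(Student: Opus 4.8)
The plan is to imitate the dominant-map proof of conjugacy for Lie $n$-algebras \cite{Kasymov}, using Theorem \ref{condition} as the bridge between Cartan subalgebras and Fitting null-components of regular elements. First I would record the elementary but decisive fact that special automorphisms conjugate right multiplication operators: for $\sigma\in G_0$ and a tuple $\mathbf a=(a_2,\dots,a_n)$ one has $\sigma R(\mathbf a)\sigma^{-1}=R\big(\sigma(a_2),\dots,\sigma(a_n)\big)$, so that $\sigma\big(L_0(R(\mathbf a))\big)=L_0\big(R(\sigma(a_2),\dots,\sigma(a_n))\big)$. In particular every special automorphism carries a Cartan subalgebra onto a Cartan subalgebra, and since $G_0$ is a group it suffices to produce a single regular $y\in L$ whose associated Cartan subalgebra $L_0(R(y))$ is simultaneously a special-automorphic image of $H$ and of $K$; then $\delta=\sigma^{-1}\tau$ will again lie in $G_0$.

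Next I would apply Theorem \ref{condition} to each of $H$ and $K$, obtaining regular tuples $h\in H^{\times(n-1)}$ and $k\in K^{\times(n-1)}$ with $H=L_0(R(h))$ and $K=L_0(R(k))$, and then build the two polynomial maps $P_H$ and $P_K$ exactly as $P$ was built in the proof of Theorem \ref{condition}. Each value of these maps has the form $\sigma(v)$ for a special automorphism $\sigma\in G_0$, and the differential computation carried out there---this is precisely where the skew-symmetry hypothesis (i) enters---shows that $d_{h^0}P_H$ (and the analogous differential for $P_K$) is an epimorphism, so that both $P_H$ and $P_K$ are dominant maps of $L$ into itself.

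Finally I would run the geometric intersection argument. Over the ground field (infinite and of characteristic zero, as in the setting of \cite{Kasymov}) the image of a dominant morphism is constructible and contains a Zariski-dense open subset; let $U_H\subseteq\operatorname{im}P_H$ and $U_K\subseteq\operatorname{im}P_K$ be such dense open sets. Since the regular locus $\mathcal R$ is also open and dense and $L$ is irreducible, I can choose $y\in U_H\cap U_K\cap\mathcal R$. Unwinding the definitions, $y$ is reached from a regular element of $H$ by some $\sigma\in G_0$ and from a regular element of $K$ by some $\tau\in G_0$; combining this with the conjugation identity of the first paragraph yields $\sigma(H)=L_0(R(y))=\tau(K)$, whence $H=\sigma^{-1}\tau(K)$ with $\sigma^{-1}\tau\in G_0$. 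I expect the main obstacle to be this geometric step together with the bookkeeping that reconciles the single-vector polynomial map $P$ with the $(n-1)$-tuple formulation of regularity and of $L_0(R(\cdot))$; this reconciliation, performed by the induction of \cite{Kasymov} and relying on hypothesis (ii) to keep the base elements in $\Ker R(h)$ (so that $P_H(h_i)=h_i$), is exactly what guarantees that the common point $y$ genuinely encodes a Cartan subalgebra conjugate to both $H$ and $K$.
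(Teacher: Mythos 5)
Your proposal is correct and takes essentially the same route as the paper: the paper's entire proof is the one-line remark that, under conditions (i)--(ii), Kasymov's dominant-map argument for Lie $n$-algebras carries over, and your write-up (conjugation of right multiplication operators by special automorphisms, the dominant maps $P_H$, $P_K$ built as in Theorem \ref{condition}, and the Zariski-density intersection with the regular locus) is precisely that argument, spelled out in more detail than the paper itself gives. The tuple-versus-single-element bookkeeping you flag at the end is exactly the part the paper likewise delegates to the induction in \cite{Kasymov}.
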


\section*{Acknowledgments}
 The  second author was supported by MICINN grant MTM 2009-14464-C02 (European
FEDER support included) and by Xunta de Galicia grant Incite 09207215 PR. The third named author would like to
acknowledge the hospitality of the University of Santiago de
Compostela (Spain). He was supported by Grant NATO-Reintegration
ref. CBP.EAP.RIG. 983169.

The last named author would like to acknowledge ACTP OEA-AC-84 for
a given support.

\address{\small \rm  Felipe Gago: Departamento de \'Algebra,  Universidad de Santiago de Compostela, 15782
Santiago de Compostela, Spain}\\ \email{felipe.gago@usc.es}

\address{\small \rm  Manuel Ladra: Departamento de \'Algebra,  Universidad de Santiago de Compostela, 15782
Santiago de Compostela, Spain}\\ \email{manuel.ladra@usc.es}

\address{\small \rm  Bakhrom Omirov: Institute of Mathematics and Information Technologies,
29, Dormon Yoli, 100125 Tashkent,
Uzbekistan}\\ \email{omirovb@mail.ru}

\address{\small \rm  Rustam Turdibaev: Department of Mathematics, National University of Uzbekistan,
Vuzgorogok, 27, 100174 Tashkent, Uzbekistan}\\ \email{rustamtm@yahoo.com}

\end{document}